\newtheorem{theorem}{Theorem}[section]
\theoremstyle{definition}
\theoremstyle{remark}
\numberwithin{equation}{section}
\def\my_c{c_\infty}
\def\ui{{\underline{i}}}
\def\leftB{[\![}
\def\rightB{]\!]}
\def\I{{\rm{I\!\!I}}}
\def\leftB{[\![}
\def\rightB{]\!]}
\def\F{{\mathcal{ F}}}
\def\R{{\mathbb{R}} }
\def\D{{\mathbb{D}} }
\def\N{{\mathbb{N}} }
\def\E{{\mathbb{E}}  }
\def\P{{\mathbb{P}}  }
\def\I{{\mathbb{I}}}
\def\det{{\rm{det}}}
\def\Tr{{\rm{Tr}}}
\def\bint#1^#2{\displaystyle{\int_{#1}^{#2}}}
\def\bsum#1^#2{\displaystyle{\sum_{#1}^{#2}}}
\def\xdt_#1{X_#1(\Delta t)}
\newtheorem{THM}{Theorem}[section]
\newtheorem{PROP}[theorem]{Proposition}
\newtheorem{COROL}{Corollary}[section]
\newtheorem{LEMME}[theorem]{Lemma}
\newtheorem{REM}[theorem]{Remark}
\def\Tr{{{\rm Tr}}}
\def\0{{\mathbf{0}}}
\newcommand \A[1]{{\bf (#1)}}
\title[Weak Error for Fractional in Time P(I)DEs]{
Weak Error for Continuous Time Markov Chains 
 Related to Fractional in Time P(I)DE{s}
}
\author{M. Kelbert}
\address{National Research University Higher School of Economics, Shabolovka 31, Moscow, Russian Federation.}
\email{MKelbert@hse.ru}
\author{V. Konakov}
\address{National Research University Higher School of Economics, Shabolovka 31, Moscow, Russian Federation.}
\email{VKonakov@hse.ru}
\author{S. Menozzi}
\address{National Research University Higher School of Economics, Shabolovka 31, Moscow, Russian Federation and  LaMME, UMR CNRS 8070, Universit\'e d'Evry Val d'Essonne, 23 Boulevard de France, 91037 Evry, France.}
\email{stephane.menozzi@univ-evry.fr}
\subjclass[2000]{Primary 60F99, 35R11; Secondary 60G52, 60H30}
\date{\today}
\keywords{Fractional Cauchy Problems, Probabilistic Approximations, Local Limit Theorems, Stable processes}
\begin{document}
\maketitle

\begin{abstract}
We provide sharp error bounds for the difference between the transition densities of
some multidimensional Continuous Time Markov Chains (CTMC)
and the fundamental solutions of
some fractional in time Partial (Integro) Differential Equations (P(I)DEs). Namely, we consider equations involving a time fractional derivative of Caputo type  
and a spatial operator corresponding to the generator of  a non degenerate 
Brownian or stable driven Stochastic Differential Equation (SDE). 
\end{abstract}

\section{Introduction}

We are interested in the probabilistic approximation of P(I)DEs of the following type:
\begin{equation}
\label{CAUCHY_I}
\begin{cases}
\partial_t^\beta u(t,x)=Lu(t,x),\ (t,x)\in \R_+^*\times \R^d,\\
u(0,x)=f(x),\ x\in \R^d,
\end{cases}
\end{equation}
where $\partial_t^\beta, \beta \in (0,1)$, stands for the Caputo--Dzherbashyan derivative and $L $ is the generator of a Brownian or stable driven non degenerate SDE (see equations \eqref{CAPUTO}, \eqref{SPATIAL_OP} below for the respective definitions of the Caputo-Dzherbashyan derivative and the spatial operators considered). Equations of the previous type appear in many applicative fields from natural sciences to finance, see e.g. Meerschaert \textit{et al.}, \cite{meer:siko:12}, \cite{meer:stra:13} and references therein.

Under suitable assumptions, the solution of \eqref{CAUCHY_I} can be represented as $u(t,x):=\E[f(X_{Z_t^\beta}^{0,x})] $ where $(X_s^{0,x})_{s\ge 0} $ solves the SDE with generator $L$ and $(Z_t^\beta)_{t\ge 0} $ is the inverse of a stable subordinator of index $\beta\in (0,1)$ independent from $X^{0,x}$. This therefore extends the {``}usual'' Feynman-Kac representation, corresponding to $\beta=1 $, to the fractional case $\beta\in (0,1) $.

Many probabilistic numerical approximations of $u(t,x)$ have been considered when $\beta=1 $. We can for instance mention the works of Konakov \textit{et al.} (see \cite{kona:mamm:00}, \cite{kona:mamm:02}, \cite{kona:mamm:09} in the non degenerate diffusive case or \cite{kona:meno:10} for  SDEs driven by symmetric stable processes) that 
investigate the Euler scheme or, more generally, the Markov Chain approximation of the spatial motions in terms of Edgeworth expansions or Local Limit Theorems (LLTs). We can also refer to the works of Bally and Talay for the Euler scheme of some hypoelliptic diffusions \cite{ball:tala:96:1}, \cite{ball:tala:96:2} or \cite{kona:meno:molc:10} for associated LLTs. 
 
  In the current (strictly) fractional framework two additional difficulties appear. Firstly, it is known that the fundamental solutions to \eqref{CAUCHY_I} exhibit, additionally to the \textit{usual} time singularity in short time, a diagonal spatial singularity, see e.g. Eidelman and Kochubei \cite{koch:eide:04} for the current case
  or Kochubei \cite{koch:14}
for extensions to higher order fractional derivatives $\beta\in(1,2)$.
  Secondly, the inverse of the subordinator leads to consider random integration times that might be either very small or very long. The analysis of the discretization error in the previously mentioned works was always performed for a fixed  final time horizon $T$ and the constants controlling the  error estimates depend explosively on $T$ for small and large times. We must therefore carefully control these explosions. 
In short time we must handle the spatial singularity of the fractional heat kernel whereas in long time, the explosion is compensated by the fast decay of the density of the inverse subordinator. 
  Let us mention that those difficulties may deteriorate the {``}usual" convergence rate for the weak error even for the Euler approximation. 
  We establish error bounds for the Euler scheme and the Markov Chain approximation of a diffusive SDE, Theorem \ref{THM_DIFF}, and for the Euler scheme of a stable driven SDE, Theorem \ref{THM_STABLE}. 
  We emphasize that Kolokoltsov \cite{kolo:09} also considered probabilistic schemes to approximate equations of type \eqref{CAUCHY_I} in a more general setting, namely considering a fractional like time derivative that could depend on space as well. He establishes convergence of the schemes towards the expected solution but since the approach relies on semigroup techniques, no convergence rates are provided.

The article is organized as follows. We first recall in Section  \ref{PROB_TOOLS} which are the probabilistic tools needed to give representations and approximations of the solution to equation   \eqref{CAUCHY_I}. We then state our main results in Section \ref{MAIN_RESULTS}. The proofs are given in Section \ref{PROOF} which is the technical core of the work. Some perspectives are considered in Section \ref{PERP}. Technical results concerning the parametrix expansions, which are needed to establish the short and long time behaviour of the involved spatial densities, are collected in Appendix \ref{ASYMP_SMALL_TIMES}. As a by-product of our analysis, we obtain two-sided heat kernel estimates for the fundamental solution of \eqref{CAUCHY_I} in Appendix \ref{APP_HK}.
   

\section{Probabilistic objects associated with Cauchy Problems with Time Fractional Derivatives.}
\label{PROB_TOOLS}

We first recall how a probabilistic representation for the solution of \eqref{CAUCHY_I} can be derived considering the special 
case of $L $ being associated with a $d$-dimensional symmetric stable process of index $\alpha\in (0,2] $, thus including the Brownian motion. We write for $\phi \in C_0^2(\R^d,\R) $ (space of twice continuously differentiable functions with compact support),
 $L\phi(x)=\frac 12 \Delta \phi(x) $ for $\alpha=2 $ whereas for $\alpha \in (0,2) $: 
\begin{eqnarray}
\label{STABLE_GEN}
L\phi(x)&=&\int_{\R^d} \{\phi(x+y )-\phi(x)-\nabla \phi(x)\cdot y \I_{|y|\le 1}\} \mu(dy)\nonumber\\
&=&\int_{\R^+\times S^{d-1}}\{\phi(x+|y|\bar y )-\phi(x)-\nabla \phi(x)\cdot |y|\bar y \I_{|y|\le 1})\} |y|^{-(1+\alpha)}d|y| \nu(d\bar y),
\end{eqnarray}
where $ \nu$ is the spherical part of $\mu $ and is assumed to be non degenerate, i.e. there exists 
\begin{equation}
\label{ND_SPEC_MEAS}
\Lambda\ge 1,\ \forall \xi \in \R^d,\ \Lambda^{-1}|\xi|^\alpha\le \int_{S^{d-1}} |\langle \xi,\eta \rangle|^\alpha \nu(d\eta)\le \Lambda|\xi|^\alpha.
\end{equation}
Observe that if $(S_u^\alpha)_{u\ge 0} $ is a stable process with generator $L $ then
$$\forall t\ge 0,\ \forall \xi \in \R^d,\ \varphi_{S_t^\alpha}(\xi):=\E[\exp(i\langle \xi,S_t^\alpha\rangle )]:= \exp\left(-tC_{\alpha}\int_{S^{d-1}} |\langle \xi,\eta \rangle|^\alpha \nu(d\eta) \right), $$
where the explicit value of $C_{\alpha} $ can be found in Sato \cite{sato:05}.

Let us now discuss heuristically how some suitable scaling limits of 
Continuous Time Random Walks (CTRWs) actually give a probabilistic interpretation of the solution to \eqref{CAUCHY_I}.  
These objects thus provide a natural approximation scheme to \eqref{CAUCHY_I}.
Basically, CTRWs are random walks that wait a certain amount of time between their jumps. In the i.i.d. case, the mechanism can be described as follows: let $(T_i,Y_i)_{i\in \N^*} $ be a sequence of $\R^+\times \R^d $-valued i.i.d pairs of random variables defined on some probability space $(\Omega,\F,\P) $.
For all $t\ge 0$ one defines:
\begin{eqnarray*}
\Gamma_t:=\sum_{i=0}^{N_t} Y_i,\ N_t:=\max\{m\in \N:\sum_{i=1}^m T_i\le t \}.
\end{eqnarray*}
If the $(T_i)_{i\in\N^*} $ are exponentially distributed and independent  of the $(Y_i)_{i\in \N^*}$ the process $(\Gamma_t)_{t\ge 0}$ is easily shown to be  Markovian.
This property clearly fails in the general case. Of particular interest is the case when the $(T_i)_{i\in \N^*} $ and the $(Y_i)_{i\in \N^*}$ are independent and respectively belong to the domain of attraction of a $\beta $-stable and $\alpha $-stable law with $\beta \in(0,1) $ and $\alpha \in (0,2] $. This property indeed yields that : 
\begin{equation}
\label{CV_LAW_APP}
(n^{-1/\beta}\sum_{i=1}^{\lfloor nu\rfloor } T_{i})_{u\ge 0} \Rightarrow (S_u^{\beta,+})_{u\ge 0}, \ \ {\rm and} \ \ (n^{-1/\alpha}\sum_{i=1}^{\lfloor nu \rfloor} Y_{i})_{u\ge 0} \Rightarrow (S_u^\alpha)_{u\ge 0},
\end{equation}
where $\lfloor \cdot\rfloor $ stands for the integer part and $ \Rightarrow$  indicates the usual convergence in law for processes. We also used the convention that $\sum_{i=1}^0 =0$. In \eqref{CV_LAW_APP}, $S^{\beta,+} $ is a $\beta $-stable subordinator, i.e. a L\'evy process with positive jumps and its Laplace transform writes $\psi_{S_u^{\beta,+}}(\lambda)=\E[\exp(-\lambda S_u^{\beta,+})]=\exp(- u \lambda^\beta),\ \lambda \ge 0 $. On the other hand, we will assume that $S^\alpha$ is a symmetric $\R^d$-valued stable process with generator as in \eqref{STABLE_GEN}.


Now, the rescaled process associated with the number of jumps $(N_t)_{t\ge 0}$ also has a limit, namely $(n^{-\beta} N_{nt})_{t\ge 0} \rightarrow (Z_t^\beta)_{t\ge 0} $, where $Z_t^\beta:=\inf\{s\ge 0: S_s^{\beta,+}> t \} $ which is the inverse process of a $ \beta$-stable subordinator. Since $S_s^{\beta,+} $ is increasing in $s$, $Z_t^\beta $ also corresponds to the first passage time of $S^{\beta,+} $ above the level $t $.
Thus, one  formally has:
$$\begin{array}{l}
\left(n^{-\beta /\alpha}\Gamma_{{nt}}\right)_{t\ge 0}= \left((n^{\beta})^{-1/\alpha}\sum\limits_{i=1}^{N_{nt}}Y_i\right)_{t\ge 0}\\
=\left((n^{\beta})^{-1/\alpha}\sum\limits_{i=1}^{n^{\beta}(n^{-\beta}N_{nt})}Y_i\right)_{t\geq 0}\Rightarrow 
\left( S_{Z_t^\beta}^\alpha\right)_{t\ge 0}.
\end{array}$$
From the independence of $S^\alpha$ and $Z^\beta$,  the limit random variable $ S_{Z_t^\beta}^\alpha$ has, for any $t>0$, an \textit{explicit} density that writes:
\begin{eqnarray}
\label{DENS_GLOB}  
q(t,x)=\int_0^{+\infty}p_{S^\alpha}(u,x) p_{Z^\beta}(t,u)  du,
\end{eqnarray}
where $p_{S^\alpha}(u,x)=\frac 1{(2\pi)^d}\int_{\R^d}\exp(-i\langle x, \lambda \rangle) \varphi_{S_u^\alpha}(\lambda)d\lambda $ and $p_{Z^\beta}(t,u) $ stands for the density of  $Z_t^\beta$ at point $u $. Observe that since $\P[Z_t^\beta\le u]=\P[S_u^{\beta,+}> t] $ we have the following relation between the density  of the inverse $Z^\beta$ and the density of the subordinator $S^{\beta,+} $:
\begin{equation}
\label{DENS_INV_SUB}
p_{Z^\beta}(t,u)=\partial_u \P[Z_t^\beta\le u]=\partial_u(1-\P[S_u^{\beta,+}\le t])=-\partial_u\int_0^{t}p_{S^{\beta,+}}(u,y)dy.
\end{equation}
We also have the following important equation for this density.
\begin{PROP}[PDE associated with $p_{Z^\beta} $]
\label{PROP_DENS_P_Z}
For fixed $t>0$, the density $p_{Z^\beta}(t,u) $ satisfies in the distributional sense the equation:
\begin{equation}
\label{EQ_DENS_Z}
(\D_t^\beta +\partial_u) p_{Z^\beta}(t,u)=\frac{t^{-\beta}}{\Gamma(1-\beta)}\delta(u),
\end{equation}
where $\delta$ stands for the Dirac mass at 0 and $\D_t^\beta $ denotes the Riemann-Liouville derivative. Namely, for a function $h$ and $\beta\in (0,1) $: 
\begin{eqnarray*}
\D_t^\beta h(t)=\frac{1}{\Gamma(1-\beta)}\frac{d}{dt}\left\{\int_0^t h(t-u)u^{-\beta}du\right\}.
\end{eqnarray*}
\end{PROP}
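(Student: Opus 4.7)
I would establish \eqref{EQ_DENS_Z} via the Laplace transform in the time variable $t$, taking advantage of the fact that the $t$-Laplace transform of $p_{Z^\beta}(t,u)$ is explicit. Setting $\tilde p_{Z^\beta}(\lambda,u):=\int_0^{+\infty}e^{-\lambda t}p_{Z^\beta}(t,u)\,dt$ for $\lambda>0$ and starting from the identity \eqref{DENS_INV_SUB}, an application of Fubini followed by an integration by parts in $t$ yields
$$\tilde p_{Z^\beta}(\lambda,u) = -\partial_u\frac{1}{\lambda}\int_0^{+\infty}e^{-\lambda y}p_{S^{\beta,+}}(u,y)\,dy = \lambda^{\beta-1}e^{-u\lambda^\beta},\quad u\ge 0,$$
where the last equality uses that the Laplace transform of the $\beta$-stable subordinator in the space variable is $e^{-u\lambda^\beta}$.

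Next, I would apply the classical rule $\mathcal{L}_t[\D_t^\beta h](\lambda)=\lambda^\beta \tilde h(\lambda)-(I^{1-\beta}h)(0^+)$, with $I^{1-\beta}h(t):=\Gamma(1-\beta)^{-1}\int_0^t(t-s)^{-\beta}h(s)\,ds$, which is immediate from the very definition of $\D_t^\beta$ given in the statement. For fixed $u>0$ the boundary term vanishes since $p_{Z^\beta}(t,u)\to 0$ as $t\downarrow 0$. The crucial step is to read $\tilde p_{Z^\beta}(\lambda,\cdot)$ as a distribution on all of $\R$ by extending it by $0$ for $u<0$: the extension has a jump of size $\lambda^{\beta-1}$ at $u=0$, so its distributional derivative in $u$ is
$$\partial_u \tilde p_{Z^\beta}(\lambda,u) = -\lambda^{2\beta-1}e^{-u\lambda^\beta}\I_{\{u>0\}}+\lambda^{\beta-1}\delta(u),$$
while $\lambda^\beta\tilde p_{Z^\beta}(\lambda,u)=\lambda^{2\beta-1}e^{-u\lambda^\beta}\I_{\{u>0\}}$. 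Summing, the smooth parts cancel and one is left with $(\lambda^\beta+\partial_u)\tilde p_{Z^\beta}(\lambda,u)=\lambda^{\beta-1}\delta(u)$. Inverting the Laplace transform via $\mathcal{L}^{-1}[\lambda^{\beta-1}](t)=t^{-\beta}/\Gamma(1-\beta)$ then delivers \eqref{EQ_DENS_Z}.

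The genuine difficulty lies in the careful bookkeeping of the distributional source at $u=0$: on the open half-line $\{u>0\}$ the claim reduces to the straightforward smooth identity $(\D_t^\beta+\partial_u)p_{Z^\beta}(t,u)=0$, and the delta contribution on the right-hand side of \eqref{EQ_DENS_Z} arises precisely from the nontrivial trace of $p_{Z^\beta}(t,\cdot)$ at the origin. This trace can be cross-checked by inverting $\tilde p_{Z^\beta}(\lambda,0^+)=\lambda^{\beta-1}$, which gives $p_{Z^\beta}(t,0^+)=t^{-\beta}/\Gamma(1-\beta)$, exactly matching the coefficient of $\delta(u)$ in \eqref{EQ_DENS_Z}. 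Integrability of $p_{Z^\beta}(\cdot,u)$ near $t=0$ uniformly for $u$ bounded away from $0$, needed to justify the interchange of Laplace transform and $\partial_u$ as well as the vanishing of the boundary term in the Riemann--Liouville rule, follows from standard scaling properties of the subordinator.
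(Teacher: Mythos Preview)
Your Laplace-transform argument is correct and constitutes a genuinely different route from the paper's proof. The paper works directly in the time domain: it differentiates under the integral in \eqref{DENS_INV_SUB}, invokes the Kolmogorov forward equation $\partial_u p_{S^{\beta,+}}(u,y)=L^{*,\beta}p_{S^{\beta,+}}(u,y)$ for the subordinator, and then shows by an explicit computation that the adjoint generator $-L^{*,\beta}$, acting on functions extended by zero on $\R^-$, coincides with the Riemann--Liouville derivative $\D_t^\beta$. The source term at $u=0$ is recovered by letting $u\downarrow 0$ and using $p_{S^{\beta,+}}(u,\cdot)\to\delta$, which gives $p_{Z^\beta}(t,0^+)=\D_t^\beta\theta(t)=t^{-\beta}/\Gamma(1-\beta)$.

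Your approach trades this structural identification for a purely algebraic computation on the transform side: once $\tilde p_{Z^\beta}(\lambda,u)=\lambda^{\beta-1}e^{-u\lambda^\beta}$ is known, the equation becomes a one-line verification, and the delta at the origin falls out transparently from the jump of the extension. The paper's argument, by contrast, makes explicit the conceptual reason fractional derivatives appear---namely that $\D_t^\beta$ \emph{is} (minus) the adjoint generator of the $\beta$-stable subordinator---which is a point of independent interest and underlies the later manipulations leading to \eqref{RIEMMAN_LIOUVILLE}. One small refinement: when you justify the vanishing of the boundary term $(I^{1-\beta}p_{Z^\beta}(\cdot,u))(0^+)$, the pointwise statement for fixed $u>0$ is not quite what is needed at the distributional level; the clean way is to note that $t\mapsto\int p_{Z^\beta}(t,u)\phi(u)\,du=\E[\phi(Z_t^\beta)]$ is bounded near $t=0$ for any test function $\phi$, which forces its $(1-\beta)$-fractional integral to vanish at the origin.
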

\begin{proof}
Let us differentiate under the integral in \eqref{DENS_INV_SUB} to get for $u>0$
:
\begin{equation}
\label{PREAL_Z_DENS}
p_{Z^\beta}(t,u)=-\int_0^t \partial_u p_{S^{\beta,+}}(u,y)dy=-\int_0^t L^{*,\beta} p_{S^{\beta,+}}(u,y)dy=-L^{*,\beta} \int_0^t  p_{S^{\beta,+}}(u,y)dy, 
\end{equation}  
using the Kolmogorov forward equation for the density of the subordinator and denoting by $L^{*,\beta} $ the adjoint of its generator. Precisely,
\begin{equation*}
L^{*,\beta}\varphi(t)=-\frac{1}{\Gamma(-\beta)}\int_0^{+\infty}\{\varphi(t-s)-\varphi(t)\} \frac{ds}{s^{1+\beta}}.
\end{equation*}
Recall now that for a smooth (say $C^1$) function $\varphi:\R^+\rightarrow\R $ that is extended by 0 on $\R^- $, for all $t>0$,
\begin{eqnarray*}
-L^{*,\beta}\varphi(t)&=&\frac{1}{\Gamma(-\beta)}\left\{ \int_0^t (\varphi(t-s)-\varphi(t))\frac{ds}{s^{1+\beta}}-\frac{\varphi(t)}{\beta t^\beta}\right\}\\
&=&\frac{1}{\Gamma(-\beta)}\left\{\frac{\varphi(t)-\varphi(0)}{\beta t^\beta}-\int_0^t \varphi'(t-s) s^{-\beta}\frac{ds}{\beta}
-\frac{\varphi(t)}{\beta t^\beta}\right\}\\
&=&\frac{1}{\Gamma(-\beta)(-\beta)}\left\{\int_0^t \varphi'(t-s) s^{-\beta}ds+\frac{\varphi(0)}{t^\beta} \right\}=\D_t^\beta \varphi(t).
\end{eqnarray*}
Thus we get the result for $u>0$ from \eqref{PREAL_Z_DENS} and \eqref{DENS_INV_SUB} differentiating w.r.t. $u$. The proof can be achieved observing that $p_{S^{\beta,+}}(u,y)dy \underset{u\rightarrow 0}{\rightarrow } \delta(dy) $ which implies
$\lim_{u\rightarrow 0^+}p_{Z^\beta }(t,u)= \D_t^\beta \theta(t)=\frac{t^{-\beta}}{\Gamma(1-\beta)}, \ \theta(t)=\I_{t>0} $. 
This observation implies the form of the singularity at the origin in Equation \eqref{EQ_DENS_Z}. 
\end{proof}

We refer to \cite{podl:99} for additional general references concerning fractional calculus.
From \eqref{DENS_GLOB} and \eqref{EQ_DENS_Z} we therefore derive that for all $t>0 $:
\begin{eqnarray}
\label{RIEMMAN_LIOUVILLE}
\D_t^\beta q(t,x)&=&\int_0^{+\infty}p_{S^\alpha}(u,x) \{-\partial_u p_{Z^\beta}(t,u)+\frac{t^{-\beta}}{\Gamma(1-\beta)}\delta(u)\}  du\nonumber\\
&=& \int_0^{+\infty}\partial_u p_{S^\alpha}(u,x) p_{Z^\beta}(t,u)du+\frac{t^{-\beta}}{\Gamma(1-\beta)}\delta(x)\nonumber\\
&=&Lq(t,x)+\frac{t^{-\beta}}{\Gamma(1-\beta)}\delta(x),
\end{eqnarray}
where we used that $\partial_u p_{S^\alpha}(u,x)=L p_{S^\alpha}(u,x)$ (the spatial density satisfies the Kolmogorov backward equation) and the fact that $S_u^\alpha\underset{u\rightarrow 0}{\rightarrow} 0 $. We have also denoted by $\delta $, with a slight abuse of notation, the Dirac mass at 0 in $\R^d$.

In other words, the transition p.d.f. $q$ of $(S_{Z_t^\beta}^\alpha)_{t\ge 0} $ can be viewed as the fundamental solution of \eqref{RIEMMAN_LIOUVILLE}. 
Hence, for a given continuous initial data $f $, the associated Cauchy problem solved by $u(t,x)=\E[f(x+S_{Z_t^\beta}^{\alpha})]$ writes:
\begin{equation}
\label{CAUCHY}
\begin{cases}
\D_t^\beta u(t,x)=Lu(t,x)+\frac{t^{-\beta}}{\Gamma(1-\beta)} f(x),\ (t,x)\in \R+^{*}\times \R^d,\\
u(0,x)=f(x),\ x\in \R^d.
\end{cases}
\end{equation}
Alternatively,  
equation \eqref{CAUCHY} can be rewritten
in terms of the Caputo-Dzherbashyan fractional derivative, denoted $\partial_t^\beta $ hereafter, recalling that for a smooth function $h$ and $\beta \in (0,1) $:
\begin{equation}
\label{CAPUTO}
\begin{split}
\partial_t^\beta h(t):=\frac{1}{\Gamma(1-\beta)}\int_0^t h'(t-u) u^{-\beta}du,\\
\D_t^\beta h(t)-\frac{t^{-\beta}}{\Gamma(1-\beta)}h(0)=\partial_t^\beta h(t),\ t>0.
\end{split}
\end{equation}
The probabilistic representation of the solution to equation \eqref{CAUCHY_I} is thus derived from \eqref{CAUCHY} applying identity \eqref{CAPUTO} to the function $h(t)=u(t,x) $.

Let us mention that, in the special case $L=\lambda^2\Delta$, some explicit expansions of the density of $\sqrt2 \lambda S_{Z_t^\beta}^{2} $ have been obtained by Beghin and Orsingher \cite{begh:orsi:09}. 

The correspondence between the previously described CTRWs and Equation \eqref{RIEMMAN_LIOUVILLE} suggested various extensions. The first one consists in introducing a Markov Chain for the spatial motion, i.e. the $i^ {{\rm th}} $-spatial jump depends on the position $Y_{i-1} $.
 This procedure can lead to consider at the limit differential operators of the more general form
\begin{eqnarray}
\label{SPATIAL_OP}
L_1\phi(x)&=&\langle b(x),\nabla \phi(x)\rangle+\frac 12 \Tr(\sigma\sigma^*(x)D_x^2\phi(x))\ {\text{or}}\nonumber\\
L_2\phi(x)&=&\langle b(x),\nabla\phi(x)\rangle\nonumber\\
&&+ \int_{\R^d} \{\phi(x+\sigma(x)y )-\phi(x)-\nabla \phi(x)\cdot \sigma(x) y \I_{|\sigma(x)y|\le 1} \}\mu(dy),\nonumber\\
\end{eqnarray}
for $\mu(dy):=|y|^{-(1+\alpha)} d|y|\nu (d\bar y) $ as in \eqref{STABLE_GEN},
in equations \eqref{CAUCHY}, \eqref{CAUCHY_I}. We consider homogeneous operators  in time for notational simplicity. 
Let $(X_s)_{s\ge 0} $ solve the SDE with generator given in \eqref{SPATIAL_OP}, i.e.
\begin{eqnarray}
\label{SDE}
X_t&=&x+\int_0^t b(X_s)ds +\int_0^t \sigma(X_{s^-}) dY_s,
\end{eqnarray}
where $Y$ is a Brownian motion in the diffusive case and a symmetric $\alpha $-stable with $\alpha\in(0,2) $  process with L\'evy measure $\mu $ otherwise. Then, under suitable assumptions, needed to get smoothness of the p.d.f. for the spatial motion $(X_t)_{t>0} $, we can derive similarly to the previous computations that the solution to the Cauchy problem \eqref{CAUCHY_I} writes:
\begin{equation}
\label{REP_FK}
\forall (t,x)\in \R^+\times \R^d,\ u(t,x)=\E[f(X_{Z_t^\beta}^{0,x})].
\end{equation}
Observe that, similarly to equation \eqref{DENS_GLOB}, denoting by $p_{X_{Z_T^\beta}} $ the density of ${X_{Z_T^\beta}}$ at time $T>0$, one has for all $(x,y)\in (\R^d)^2$:
\begin{eqnarray}
\label{DENS_GLOB_MARKOV}  
p_{X_{Z_T^\beta}}(x,y)=\int_0^{+\infty}p(u,x,y) p_{Z^\beta}(T,u)  du,
\end{eqnarray}
where $p(u,x,.) $ stands for the density of $X_u^{0,x}$.
Let us also mention that modifying the laws of the waiting  times including a possible dependence on the previous jump time and spatial position, leads to consider at the limit a \textit{modified} fractional derivative in time. We refer to Kolokoltsov \cite{kolo:09} for details and results concerning the extension of the formula \eqref{REP_FK} and convergence of the associated approximations.

\section{Assumptions and Main Results}
\label{MAIN_RESULTS}
\subsection{Assumptions on the Coefficients and Approximation Scheme.}
We assume that the coefficients in 
equation \eqref{SDE} 
satisfy the following conditions:
\begin{trivlist}
\item[\A{S}] The coefficients $b:\R^d\rightarrow \R^d,\ \sigma:\R^d\rightarrow \R^d\otimes \R^d$ are assumed to be
bounded as well as their derivatives up to order 6.
\item[\A{UE}] There exists $\Lambda\ge 1$ s.t. for all $(x,\xi)\in (\R^d)^2 $:
$$\Lambda ^{-1}|\xi|^2 \le \langle \sigma\sigma^*(x)\xi,\xi\rangle \le \Lambda|\xi|^2.$$
Now,   we consider for a given time step $h>0$, setting for $i\in \N,\ t_i:=ih$, the following Markov Chain approximation of equation \eqref{SDE}:
\begin{equation}
\label{SCHEME}
\forall i\in \N,\ X_{t_{i+1}}^h=X_{t_i}^h+b(X_{t_i}^h)h+\sigma(X_{t_i}^h)h^{1/\alpha} \eta_{i+1},\ X_0^h=x,
\end{equation}
for i.i.d. $\R^d$-valued random variables $(\eta_j)_{j\ge 1} $ and $\alpha\in (0,2]$. 

Also, since we are considering the fractional derivative, we can explicitly and exactly simulate $(S_{t_i}^{\beta,+})_{i \in \N}$, see, e.g., Weron and Weron \cite{wero:wero:95}. From the exact simulation on the time grid, a natural choice to approximate the inverse process $Z_t^\beta$ is to consider the \textit{discrete} inverse process:
\begin{equation}
\label{OBS_DIS_INV}
\forall t\ge 0,\ Z_t^{\beta,h}:=\inf\{s_i:=ih: S_{s_i }^{\beta,+}> t\}.
\end{equation}

Thus, for a given $T>0$, we finally approximate $X_{Z_T^\beta} $ appearing in \eqref{REP_FK} by $X_{Z_T^{\beta,h}}^h $.

Let us mention that the smoothness assumption \A{S} could be weakened in the case of the Euler scheme. These smoothness conditions are actually those required to apply the results of \cite{kona:mamm:09}.

\subsection{Diffusive case:} This case is associated with spatial motions with generators of the form $L_1$ in \eqref{SPATIAL_OP}.
We then specifically assume that $\alpha=2 $  in \eqref{SCHEME} and that the $(\eta_j)_{j\ge 1} $ are 
 s.t.  their moments coincide with those of the standard Gaussian law in $\R^d $ up to order $2$. 
We consider two kinds of conditions: 
\begin{trivlist}
\item[\A{I${}_{{\rm Eul}} $}] The $(\eta_j)_{j\ge 1} $ have standard Gaussian densities (so that the above scheme is the usual Euler discretization).
\item[\A{I${}_m$}]  For a given integer $m\ge 2(d+1)$, the $(\eta_j)_{j\ge 1} $ have a density $Q_M $ which is $C^4$ and has, together with its derivatives up to order $4$, polynomial decay of order $M> d(2m+1)+4$. Namely, for all $z\in \R^d$ and multi-index $\nu, |\nu|\le 4 $:
$$|D_\nu Q_M(z)|\le C_M(1+|z|)^{-M}.$$
\end{trivlist}
\end{trivlist}
We say that assumption \A{A${}_{D,{\rm Eul}}$} (resp. \A{A${}_{D,m}$}) is in force as soon as \A{S}, \A{UE}, \A{I${}_{{\rm Eul}}$} (resp. \A{I${}_{m}$}) hold.
We write \A{A${}_{D}$} whenever \A{A${}_{D,{\rm Eul}}$} or \A{A${}_{D,m}$} holds. 

It is well known that under \A{A$_{D} $} the random variables $X_t,X_{t_i}^h $ have densities for all $t>0, i\in \N^* $ respectively, see e.g. \cite{kona:mamm:00}.
This property then transfers to $X_{Z_T^{\beta}}$ and $X_{Z_T^{\beta,h}}^h$ thanks to the convolution equation \eqref{DENS_GLOB_MARKOV} and its discrete analogue for the approximation (see equation \eqref{DEC_1_SCHEME} below). We will denote  by $p_{X_{Z_T^\beta}} $ and $p_{X_{Z_T^{\beta,h}}^h} $ the associated p.d.f. To distinguish precisely the approximations under \A{A${}_{D,{\rm Eul}}$} or \A{A${}_{D,m}$} we will specifically denote, when needed, by $X_{Z_T^{\beta,h}}^{{\rm Eul},h}, p_{X_{Z_T^{\beta,h}}^{{\rm Eul},h}} $ the Euler approximation and its density.

We now have the following convergence result:
\begin{THM}[Error Bounds for the Approximation Schemes]
\label{THM_DIFF}
\hspace*{2.cm}
\begin{trivlist}
\item[-] \textbf{Euler Scheme.}
There exists $c:=c($\A{A${}_D $}) s.t. for a given time step $h\in (0,1)$, a deterministic time horizon $T>0$ s.t. $T>h^{1/2}$ (not necessarily a multiple of $h$), for every $x\neq y$ we have under \A{A$_{D,{\rm Eul}}$} (Euler scheme):
\begin{eqnarray}
|p_{X_{Z_T^\beta}}(x,y)-p_{X_{Z_{T}^{\beta,h}}^{{\rm Eul},h} }(x,y)|\nonumber\\
=
|\int_{\R^+}p_{Z^\beta}(T,u) p(u,x,y) du- \sum_{i\ge 1} \P[Z_T^{\beta,h}=t_i]  p^{{\rm Eul},h}(t_i,x,y)|\label{DEC_1_SCHEME}\\
\le 
ch \bigg( {\mathcal E}_{\beta,{\rm Time}}(T,x-y)+{\mathcal E}_{\beta, {\rm Space}}(T,x-y) \bigg),\nonumber
\end{eqnarray}
where:
\begin{eqnarray*}
{\mathcal E}_{\beta,{\rm Time}}(T,x-y)&:=&\bigg\{ \bigg(\frac{\I_{d\le 2}}{T^{\beta/2}|x-y|} +\frac{\I_{d\ge 3}}{|x-y|^2}\bigg)\hat p_\beta
(T,x-y) +\frac1{T^\beta}\tilde p_{\beta}(T,x-y)\bigg\},\nonumber\\
{\mathcal E}_{\beta, {\rm Space}}(T,x-y)&:=&\bigg\{ \bigg(\frac{\I_{d=1}+\I_{d\ge 3}}{|x-y|} +\frac{\I_{d= 2}}{T^{\beta/2}}\bigg)\hat p_\beta
(T,x-y) +\frac1{T^{\beta/2}}\tilde p_{\beta}(T,x-y)\bigg\},\nonumber
\end{eqnarray*}
with 
\begin{eqnarray}
\label{DEF_P_BETA}
\hat p_{\beta}
(T,x-y)&:=&\bigg(\frac{\I_{d\le 2}}{T^{\beta/2}|x-y|^{\I_{d=2}}}+
\frac{\I_{d\ge 3}}{T^{\beta}|x-y|^{d-2}} \bigg)\exp(cT^{\beta/2})\nonumber\\
&&\times\exp{\left(-c^{-1}\frac{|x-y|^2}{T^\beta}\right)},\nonumber\\
\tilde p_{\beta}(T,x-y)&:=& \frac{\exp(cT^{\beta/(1+\beta)})}{T^{\beta d/2}} \exp\bigg(-c^{-1}\left\{\frac{|x-y|^2}{T^\beta} \right\}^{1/(2-\beta)}\bigg)\bigg\}. 
\end{eqnarray}
The two auxiliary functions $\hat p,\tilde p_\beta$ are s.t. their sum serves as an upper bound for both $p_{X_{Z_T^\beta}},\ p_{X_{Z_{T}^{\beta,h}}^{{\rm Eul},h} }$. Precisely,
there exists $c:=c(\beta) $ s.t. for all $T>0,\ (x,y)\in (\R^d)^2, \ x\neq y $:
$$(p_{X_{Z_T^\beta}}+ p_{X_{Z_{T}^{\beta,h}}^{{\rm Eul},h} })(x,y)\le c(\hat p_\beta
+\tilde p_\beta)(T,x-y).$$
We refer to Corollary \ref{LEMME_QUI_FAIT_A_LA_KOCHUBEI} for a proof of this last statement. We also establish in Appendix \ref{APP_HK} a similar lower bound, proving the estimate is sharp.\newpage

\item[-] \textbf{Markov Chain.}
Consider now assumption \A{A${}_{D,m}$} (general Markov Chain). Then,  there exists $c:=c($\A{A${}_{D,m}$})$ $ and for all $\varepsilon \in (0,1/5) $, $c_\varepsilon:=c_\varepsilon($\A{A${}_{D,m}$}) s.t. for all given time step $h\in (0,1)$, $T^\beta\ge h^{1/5-\varepsilon},\ (x,y)\in (\R^d)^2,\ x\neq y $:
\begin{equation}
\begin{split}
|p_{X_{Z_T^\beta}}(x,y)-p_{X_{Z_{T}^{\beta,h} }^h }(x,y)|
\le c \big\{h {\mathcal E}_{\beta,{\rm Time}}(T,x-y)\\+c_\varepsilon\{
h^{1/2} {\mathcal E}_{\beta,{\rm Space, LLT}}^M(T,x-y)
+
{\mathcal E}_{\beta,{\rm Space, NoLLT}}^{M,
\varepsilon}(T,x-y,h)\}
\big\},
\end{split}
\label{decoup_err_mc}
\end{equation}
where the contribution ${\mathcal E}_{\beta,{\rm Time}}(T,x-y) $ due to the time sensitivity, see equations \eqref{DEC_1_SCHEME} and \eqref{ERROR_1}, is as above and:
\begin{eqnarray*}
{\mathcal E}_{\beta,{\rm Space, LLT}}^M(T,x-y)&:=&\bigg\{ \bigg(\frac{\I_{d= 1}+\I_{d\ge 3}}{|x-y|}+\frac{\I_{d= 2}}{T^{\beta/2}} \bigg)\hat q_{m-[(d-1)+\I_{d=1}],\beta}(T,x-y)\\ && +\frac1{T^{\beta/2}}\tilde q_{m,\beta}(T,x-y)\bigg\},
\end{eqnarray*}
denoting as well for all $l\in \N^*  $: 
\begin{eqnarray}
\label{DEF_Q_M_BETA}
\hat q_{l,\beta}(T,x-y)&:=&\bigg(\frac{\I_{d\le 2}}{T^{\beta/2}|x-y|^{\I_{d=2}}}+
\frac{\I_{d\ge 3}}{T^{\beta}|x-y|^{d-2}} \bigg)\exp(cT^{\beta/2})\nonumber\\
&&\times\left(1+\frac{|x-y|}{T^{\beta/2}}\right)^{-l},\nonumber\\
\tilde q_{m,\beta}(T,x-y)&:=& \frac{\exp(cT^{\beta/(1+\beta)})}{T^{\beta d/2}} 
\bigg(1+\frac{|x-y|}{T^{\beta/2}}\bigg)^{-\lfloor \frac{m}{2-\beta} \rfloor}.
\end{eqnarray}
Also:
\begin{eqnarray*}
{\mathcal E}_{\beta,{\rm Space, NoLLT}}^{M,
\varepsilon}(T,x-y,h):=
 \frac{(h^{1/5-\varepsilon})^{\frac 12\I_{d\le2}}}{T^\beta|x-y|^{d-2+\I_{d\le2}}} \exp(cT^{\beta/2})\\
\times\bigg[\exp\left(-c\frac{|x-y|^2}{h^{1/5-\varepsilon}}  \right) +\left(1+\frac {|x-y|}{(h^{1/5-\varepsilon})^{1/2}} \right)^{-2(m-1)+(d-2)+\I_{d\le2}} \bigg].
\end{eqnarray*}

\end{trivlist}

\end{THM}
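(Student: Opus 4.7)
The plan is to exploit the integral representations
$$p_{X_{Z_T^\beta}}(x,y) = \int_0^{+\infty} p_{Z^\beta}(T,u)\, p(u,x,y)\, du,\qquad p_{X_{Z_T^{\beta,h}}^h}(x,y) = \sum_{i\ge 1} \P[Z_T^{\beta,h}=t_i]\, p^h(t_i,x,y),$$
and to decompose the difference into three pieces: (i) a pure \emph{time-discretization} term $\sum_i \int_{t_{i-1}}^{t_i} p_{Z^\beta}(T,u)[p(u,x,y) - p(t_i,x,y)]\,du$, (ii) a pure \emph{spatial} weak-error term $\sum_i \int_{t_{i-1}}^{t_i} p_{Z^\beta}(T,u)[p(t_i,x,y) - p^h(t_i,x,y)]\,du$, and (iii) a remainder $\sum_i \bigl(\P[Z_T^{\beta,h}=t_i] - \int_{t_{i-1}}^{t_i} p_{Z^\beta}(T,u)\,du\bigr)\, p^h(t_i,x,y)$ accounting for the discrete approximation \eqref{OBS_DIS_INV} of the inverse subordinator. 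Piece (iii) is of order $h$ times a suitable supremum of $|\partial_u p_{Z^\beta}(T,u)|$ on $[t_{i-1},t_i]$; once the $u$-sensitivity of $p_{Z^\beta}(T,\cdot)$ is controlled via Proposition \ref{PROP_DENS_P_Z}, this contribution is absorbed in (i) and (ii). Pieces (i) and (ii) will produce respectively the ${\mathcal E}_{\beta,\mathrm{Time}}$ and ${\mathcal E}_{\beta,\mathrm{Space}}$ (or the LLT/NoLLT splitting) contributions.

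For piece (i), the Kolmogorov backward equation yields $p(u,x,y) - p(t_i,x,y) = -\int_u^{t_i} L p(s,x,y)\,ds$, and the parametrix estimates of Appendix \ref{ASYMP_SMALL_TIMES} give $|L p(s,x,y)| \le C\,\bigl(|x-y|^{-2}\wedge s^{-1}\bigr)\, p_c(s,x-y)$ for a Gaussian upper bound $p_c$. Integrating $h\cdot \bigl(|x-y|^{-2}\wedge u^{-1}\bigr)\, p_c(u,x-y)$ against $p_{Z^\beta}(T,u)\,du$ then produces, via the subordination identity \eqref{DENS_GLOB} together with the plateau behavior $p_{Z^\beta}(T,u) \to T^{-\beta}/\Gamma(1-\beta)$ as $u\to 0^+$ from Proposition \ref{PROP_DENS_P_Z}, the building block $\hat p_\beta$ governing the small-$u$ Gaussian regime; a saddle-point (Laplace-type) argument against the stretched exponential tail of $p_{Z^\beta}(T,u)$ at $u\to +\infty$ yields the complementary $\tilde p_\beta$ block with its subdiffusive exponent $2/(2-\beta)$. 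The dichotomy $\I_{d\le 2}$ versus $\I_{d\ge 3}$ and the weights $1/(T^{\beta/2}|x-y|)$ versus $1/|x-y|^2$ in ${\mathcal E}_{\beta,\mathrm{Time}}$ reflect exactly the integrability of $u^{-1}\wedge |x-y|^{-2}$ against $p_c(u,x-y)$ in the distinct dimensions. For piece (ii) in the Euler case I invoke the weak-error bound of Konakov--Mammen \cite{kona:mamm:00,kona:mamm:09}, namely $|p(t_i,x,y) - p^{\mathrm{Eul},h}(t_i,x,y)| \le C h\, t_i^{-1}\, p_c(t_i,x-y)$, and carry out the analogous subordination; the extra $1/t_i$ converts, after integration against $p_{Z^\beta}(T,u)$, into the $1/|x-y|$ (or $1/T^{\beta/2}$ in $d=2$) prefactor of ${\mathcal E}_{\beta,\mathrm{Space}}$.

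In the Markov Chain case only piece (ii) changes: under \A{A${}_{D,m}$} the weak-error bound of \cite{kona:mamm:09} comes as an Edgeworth/LLT expansion with polynomial remainder of order $(1 + |x-y|/t_i^{1/2})^{-l}$ at rate $h^{1/2}$, but this expansion requires $t_i$ not too small. I split the $u$-integral at the threshold $u = h^{1/5-\varepsilon}$: above the threshold the LLT applies and yields ${\mathcal E}_{\beta,\mathrm{Space,LLT}}^M$ via the subordination arguments above (with $l = m-[(d-1)+\I_{d=1}]$ coming from how many polynomial decay factors survive one differentiation step), while below the threshold only polynomial tail bounds on $p^h$ from the assumption \A{I${}_m$} on the innovation density $Q_M$ are available, yielding the rougher ${\mathcal E}_{\beta,\mathrm{Space,NoLLT}}^{M,\varepsilon}$ with the crossover scale $h^{1/5-\varepsilon}$ appearing explicitly; the hypothesis $T^\beta \ge h^{1/5-\varepsilon}$ guarantees the threshold lies in the useful integration range. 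The main obstacle is the sharp balance performed in piece (i) between the short-time singularity $(|x-y|^{-2}\wedge u^{-1})\,p_c(u,x-y)$ of $Lp$ and the non-integrable $u\to 0^+$ plateau of $p_{Z^\beta}(T,u)$: without the Gaussian cancellation from $p_c$ the integral diverges, and extracting the dimension-dependent thresholds stated in the theorem requires the full parametrix asymptotics of Appendix \ref{ASYMP_SMALL_TIMES}, which simultaneously underpins the two-sided heat-kernel bounds used above.
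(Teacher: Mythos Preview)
Your decomposition is essentially the paper's, but you are carrying a phantom term. Piece (iii) vanishes identically: from the definition \eqref{OBS_DIS_INV}, $Z_T^{\beta,h}=t_i$ if and only if $S_{t_{i-1}}^{\beta,+}\le T<S_{t_i}^{\beta,+}$, which (since a L\'evy subordinator a.s.\ has no jump at a fixed time) is exactly the event $\{Z_T^\beta\in[t_{i-1},t_i)\}$. Hence $\P[Z_T^{\beta,h}=t_i]=\int_{t_{i-1}}^{t_i}p_{Z^\beta}(T,u)\,du$ \emph{exactly}, and there is nothing to estimate. The paper exploits this by writing, with $\phi(u):=\inf\{t_i:t_i>u\}$, the two–term splitting
\[
{\mathcal E}(T,x,y)=\int_0^{+\infty}p_{Z^\beta}(T,u)\big(p(u,x,y)-p(\phi(u),x,y)\big)\,du+\sum_{i\ge 1}\P[Z_T^{\beta,h}=t_i]\big(p-p^h\big)(t_i,x,y),
\]
which is exactly your (i)$+$(ii). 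So no control of $\partial_u p_{Z^\beta}(T,u)$ is needed.

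There is, however, a genuine slip in your treatment of piece (ii) for the Euler scheme. The weak error bound you quote with a $t_i^{-1}$ singularity is wrong on two counts. First, the correct short-time exponent is $t_i^{-1/2}$, not $t_i^{-1}$: the leading term in the error expansion is $\tfrac h2\,p\otimes(L^2-L_*^2)p$, and the third-order spatial derivative it contains produces, after the time-space convolution, a $t_i^{-1/2}$ singularity (see Lemma~\ref{GROSSE_BORNE_SPATIALE_DIFF} and the discussion in Appendix~\ref{APP_DIFF}). Second, for large $t_i$ the bound carries a factor $\exp(Ct_i^{1/2})$ that must be balanced against the tail of $p_{Z^\beta}$; your quoted inequality omits it. The exponent matters for the conclusion: integrating $u^{-1}g_c(u,x-y)\exp(Cu^{1/2})$ against $p_{Z^\beta}(T,u)$ corresponds to $\gamma=1$ in Lemma~\ref{GROS_LEMME} and produces the ${\mathcal E}_{\beta,{\rm Time}}$ prefactors $|x-y|^{-2}$ (for $d\ge3$) and $T^{-\beta}$, \emph{not} the ${\mathcal E}_{\beta,{\rm Space}}$ prefactors $|x-y|^{-1}$ and $T^{-\beta/2}$ that you announce. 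Those come precisely from $\gamma=1/2$, i.e.\ from the correct $t_i^{-1/2}$. Once you replace $t_i^{-1}$ by $t_i^{-1/2}\vee\exp(Ct_i^{1/2})$, your argument for (i), (ii) and the LLT/NoLLT split under \A{A${}_{D,m}$} coincides with the paper's Lemmas~\ref{LEMME_T_SENS} and~\ref{LEMME_S_SENS_D}.
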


\textbf{Comments on the results.}
\begin{trivlist}
\item[-] \textbf{Euler Scheme.}
Observe that the term $\hat p_\beta$ in \eqref{DEF_P_BETA} comes from the contribution of small times in fomula \eqref{DENS_GLOB_MARKOV}. 
This gives the additional (diagonal) spatial singularity in the fractional case. The term $\tilde p_\beta$ in \eqref{DEF_P_BETA} comes from the integration over long times in \eqref{DENS_GLOB_MARKOV}. It induces a loss of concentration in space w.r.t to the usual case $\beta=1$, though preserving the usual parabolic equilibrium: the spatial contribution remains comparable to the square root of time.

\begin{trivlist}
\item[$\bullet$]In the diagonal region, i.e. for points s.t. $  |x-y| \le  \kappa T^{\beta/2}$ for some $\kappa \ge 1 $, if we restrict to spatial points that are equivalent to the time contribution for the usual parabolic metric, i.e.  $\kappa^{-1} T^{\beta/2} \le   |x-y| \le  \kappa T^{\beta/2}$, the previous error bound reads in \textit{small} time, i.e. for $T\le 1$ : 
\begin{equation}
\label{BD_EQUIV}
|p_{X_{Z_T^\beta}}(x,y)-p_{X_{Z_{T}^{\beta,h}}^{{\rm Eul},h} }(x,y)|\le \frac{ch}{T^\beta}(\hat p_\beta+\tilde p_\beta)(T,x-y).
\end{equation}
This means that we find the upper-bound for the density of the processes involved, multiplied by a factor  corresponding to the singularity induced by a time derivative (or equivalently second order derivatives in space) of the non degenerate diffusive heat kernel at time $T^\beta$. There is even here a difference w.r.t. to the usual analysis, see e.g. \cite{kona:mamm:02}, which involves a singularity of order one in space, which in the current bound comes from the contribution ${\mathcal E}_{\beta,{\rm Space}}(T,x-y) $ in \eqref{DEC_1_SCHEME} and is negligible if $T\le 1$ . This is due to the discrete approximation of the inverse of the subordinator which yields to consider derivatives in time for the densities of the spatial motion (see equation \eqref{ERROR_1} and Lemma \ref{LEMME_T_SENS} below). If the error bound \eqref{DEC_1_SCHEME} is now considered for $T$ sufficiently large, the contribution 
${\mathcal E}_{\beta,{\rm Space}}(T,x-y) $ dominates and we are back to the \textit{usual} error rate for the considered region.

Observe as well that the spatial diagonal singularity induced by the fractional time derivative yields additional constraints. Namely, 
for the Euler approach, to get a convergence rate of order $h^\varepsilon $ one needs to take:
$|x-y|\ge h^{(1-\varepsilon)/d}$, $d\ge 1 $.

\item[$\bullet $] In the off-diagonal region, i.e. $|x-y|\ge \kappa T^{\beta/2} $, we do not have spatial singularities anymore. Indeed, the error bound also reads as in \eqref{BD_EQUIV}. Anyhow, the specificity comes from the loss of concentration due to the term $\tilde p_\beta $ coming from the long time integration in \eqref{DENS_GLOB_MARKOV} and appearing in the two-sided estimates of Appendix \ref{APP_HK}. We refer to Lemmas \ref{BOUNDS_P_Z_P_Z_H} and \ref{GROS_LEMME} (see also Corollary \ref{LEMME_QUI_FAIT_A_LA_KOCHUBEI} and Theorem \ref{HK_BOUNDS_FRAC}) for a detailed presentation of these facts.
\end{trivlist}

The previous facts make us feel that the previous bounds are sharp, up to constants.

\item[-] {\textbf{Markov Chains.}} For the error bound \eqref{decoup_err_mc} associated with the Markov Chain approximation, the term ${\mathcal E}_{\beta,{\rm Time}} $ 
is similar to the Euler case, since it does not depend on the chosen discretization scheme but only on the time sensitivities of the spatial density, see again \eqref{ERROR_1}. On the other hand two additional contributions appear. The first one, ${\mathcal E}_{\beta,{\rm Space, LLT}}^M $ enjoys the same convergence rate $h^{1/2} $ as in the usual LLT (see Petrov \cite{petr:05}, \cite{kona:mamm:00}) and the terms $ \hat q_{m-(d-1)+\I_{d=1},\beta}, \tilde q_{m,\beta}$ appearing in \eqref{decoup_err_mc} are the \textit{Markov chain analogue} of the contributions $\hat p_\beta $ and $\tilde p_\beta$ in \eqref{DEC_1_SCHEME}\footnote{We recall that $(\hat p_\beta+\tilde p_\beta)(T,x-y) $ serves as an upper bound for $(p_{X_{Z_T^\beta}})(x,y) $ (see Corollary \ref{LEMME_QUI_FAIT_A_LA_KOCHUBEI}).}.
Note first that we keep under \A{A${}_{D,m}$} the same index $m$ as the one appearing for the weak error at a given fixed time, see \eqref{LLT_DIFF_MARG} below and \cite{kona:mamm:09}, \cite{kona:mamm:00}, for the contribution $\tilde q_{m,\beta} $ associated with the large time integration and which also yields a loss of concentration. Observe as well that the spatial diagonal singularities in small time also deteriorate the concentration in this framework. 
We point out that, for the LLT to apply, the considered time has to be sufficiently large, namely $u\ge h^{1/5-\varepsilon} $ in \eqref{DENS_GLOB_MARKOV}. The last term ${\mathcal E}_{\beta,{\rm Space, NoLLT}}^{M,
\varepsilon}$ comes precisely from those \textit{very} small times $u\le h^{1/5-\varepsilon} $ for which the limit results do not apply.
\item[-] {\textbf{Continuity of the Estimates when $\beta \uparrow 1$.}} 
In this case, the Caputo derivative in \eqref{CAPUTO} tends to the usual derivative. It is therefore natural to ask whether our results match the ones previously obtained in that case (see \cite{kona:mamm:00}, \cite{kona:mamm:02}, \cite{kona:meno:10}).
Let us now emphasize that when $\beta\uparrow 1 $ then $\psi_{S_u^{\beta,+}}(\lambda)\rightarrow \exp(-\lambda u), \lambda \ge 0$. Thus, the subordinator $S^{\beta,+} $, and its inverse $Z^\beta $ both tend to the deterministic drift with slope 1. This means that for a given $T>0, \P[Z_T^\beta \in du]\underset{\beta \rightarrow 1}{\longrightarrow} \delta_T(du)$. Hence, the time integral in \eqref{DENS_GLOB_MARKOV} disappears at the limit, i.e. $ p_{X_{Z_T^\beta}}(x,y)\underset{\beta \rightarrow 1}{\longrightarrow}p(T,x,y)$.
From \eqref{DEC_1_SCHEME} and \eqref{ERROR_1}, the same phenomenon occurs for the scheme provided $T=Kh,\ K\in \N^*$ and the error associated with the time sensitivity, term ${\mathcal E}_1 $ in \eqref{ERROR_1} yielding the contribution ${\mathcal E}_{\beta,{\rm Time}} $ would vanish in the previous error bounds. Since the previous Dirac convergence would also kill the additional contributions in small and long time, we finally derive that
the error bounds obtained in \eqref{DEC_1_SCHEME} and \eqref{decoup_err_mc} are coherent with those of the previous works at the limit.
\end{trivlist}

\newpage
\subsection{Strictly Stable case}
Additionally to assumptions \A{A}, \A{UE} we will assume that :
\begin{trivlist}
\item[\A{ND}]  The spherical part $\nu $ of the measure
$\mu$ in \eqref{SPATIAL_OP} satisfies the non-degeneracy condition of equation
\eqref{ND_SPEC_MEAS}.
Moreover,
$\nu $ has a $C^3$ density w.r.t. the Lebesgue measure of the sphere.  
\item[\A{B}] The drift $b=0$ if $\alpha\le 1 $.
\end{trivlist}
This last condition is rather usual to have pointwise bounds on the density of the SDE, see \cite{kolo:00}.
We will consider similarly to the diffusive case the approximation \eqref{SCHEME} for $\alpha\in (0,2) $ restricting ourselves to the Euler scheme case, i.e.
the $(\eta_j)_{j\ge 1} $ are i.i.d. symmetric stable random variables with common law $Y_1 $ (driving process at time $1$). We say that \A{A${}_S $} holds if \A{S}, \A{UE}, \A{ND}, \A{B} are in force. 
Recall from Kolokoltsov \cite{kolo:00} and \cite{kona:meno:10} that the random variables $X_t,X_{t_i}^{h,{\rm Eul}} $ have a density for all $t>0,\ i\in \N^* $ respectively. Thus, similarly to the diffusive case, this property transfers to the random variables $X_{Z_T^\beta}, X_{Z_T^{\beta,h}}^{h,{\rm Eul}} $ from \eqref{DENS_GLOB_MARKOV}.
With the previous notations, we have the following approximation result.

\begin{THM}
\label{THM_STABLE} 
Under \A{A$_{S}$}, there exists a constant $c$ s.t. for a given time step $h\in (0,1)$ and a fixed time horizon $T>h^{1/\beta}$,  and any $(x,y)\in ( \R^ d)^2$ s.t. $|x-y|\ge h^{1/\alpha} $  we have the following convergence result:  
\begin{equation}
\label{EQ_ERR_STABLE_EUL}
\begin{split}
|p_{X_{Z_T^\beta}}(x,y)-p_{X_{Z_{T}^{\beta,h} }^{h,{\rm Eul}} }(x,y)|\\
\le  
 c h\Big \{\Big( \frac{ \I_{|x-y|\le T^{\beta/\alpha}} }{|x-y|^\alpha}+\frac{ \I_{|x-y|> T^{\beta/\alpha}} }{T^\beta}\Big)\hat p_{\beta}(T,x-y)+\frac{1}{T^\beta}\tilde p_\beta(T,x-y)\Big\}
 , 
 \end{split}
\end{equation}
where
\begin{eqnarray*}
 \hat p_{\beta}(T,x-y) &:= &\exp(cT^{\beta\omega)}) \Big\{ \frac{1}{T^\beta|x-y|^{d-\alpha}}\I_{|x-y|\le T^{\beta/\alpha} }+\frac{T^\beta}{|x-y|^{d+\alpha}}\I_{|x-y|>T^{\beta/\alpha}}\Big\},\\
 \tilde p_\beta(T,x-y)&:=&\frac{\exp(cT^{\beta\omega/(1-\omega(1-\beta))})}{T^{\beta d/\alpha}\left(1+\frac{|x-y|}{T^{\beta/\alpha}} \right)^{d+\alpha}},\ \omega:=\frac 1\alpha \wedge 1,\\
(p_{X_{Z_T^\beta}}&+&p_{X_{Z_T^{\beta,h}}^{{\rm Eul},h} })(x,y)\le c (\hat p_{\beta}+\tilde p_\beta)(T,x-y),\ c:=c(\beta,\A{A_{\text{\it S}}}).
\end{eqnarray*}
\end{THM}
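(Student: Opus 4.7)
The plan is to split the error into a time-sensitivity contribution and a spatial (Euler scheme) contribution. Using the identity
$$\P[Z_T^{\beta,h}=t_i]=\P[S_{t_{i-1}}^{\beta,+}\le T<S_{t_i}^{\beta,+}]=\int_{t_{i-1}}^{t_i}p_{Z^\beta}(T,u)\,du,$$
I decompose
$$p_{X_{Z_T^\beta}}(x,y)-p_{X_{Z_T^{\beta,h}}^{h,{\rm Eul}}}(x,y)=\mathcal{E}_{\rm Time}+\mathcal{E}_{\rm Space},$$
where $\mathcal{E}_{\rm Time}:=\sum_{i\ge 1}\int_{t_{i-1}}^{t_i}[p(u,x,y)-p(t_i,x,y)]\,p_{Z^\beta}(T,u)\,du$ captures the error coming from discretizing the inverse subordinator while keeping the exact spatial density, and $\mathcal{E}_{\rm Space}:=\sum_{i\ge 1}\P[Z_T^{\beta,h}=t_i][p(t_i,x,y)-p^{{\rm Eul},h}(t_i,x,y)]$ is the Euler weak error at each deterministic node $t_i$, averaged against the law of $Z_T^{\beta,h}$.

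For $\mathcal{E}_{\rm Space}$, I would invoke the pointwise weak-error estimates for the Euler scheme of a symmetric stable SDE established in \cite{kona:meno:10}: under \A{A${}_S$} and for $|x-y|\ge h^{1/\alpha}$ these yield a bound of the form $|p(s,x,y)-p^{{\rm Eul},h}(s,x,y)|\le C h\,s^{-1} q(s,x-y)$, with $q$ a stable-type upper bound for both densities. Substituting this and evaluating $h\sum_i t_i^{-1}q(t_i,x-y)\P[Z_T^{\beta,h}=t_i]$ via the short- and long-time estimates of Lemmas~\ref{BOUNDS_P_Z_P_Z_H} and \ref{GROS_LEMME} on $p_{Z^{\beta,h}}$ produces the $h/T^\beta$ prefactor together with the density upper bound $\hat p_\beta+\tilde p_\beta$; the $\tilde p_\beta$ component arises specifically from the long-time range where $p_{Z^{\beta,h}}(T,\cdot)$ displays its Mittag-Leffler-type tail and generates the loss of concentration.

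For $\mathcal{E}_{\rm Time}$, the key input is the Kolmogorov backward equation $\partial_u p(u,x,y)=Lp(u,x,y)$ combined with the pointwise estimate $|Lp(u,x,y)|\le C u^{-1} q(u,x-y)$, valid for non-degenerate symmetric stable densities by the scaling $p(u,\cdot)=u^{-d/\alpha}g(\cdot/u^{1/\alpha})$ and standard off-diagonal control on $g$. Since $t_i-u\le h$ on each sub-interval,
$$|\mathcal{E}_{\rm Time}|\le C h\int_0^{+\infty}u^{-1}q(u,x-y)\,p_{Z^\beta}(T,u)\,du.$$
The singular prefactor $1/|x-y|^\alpha$ in the diagonal regime $|x-y|\le T^{\beta/\alpha}$ is then extracted by splitting the integral at $u=|x-y|^\alpha$: on $u\le|x-y|^\alpha$ the stable density obeys $q(u,x-y)\le C u/|x-y|^{d+\alpha}$, so $u^{-1}q(u,x-y)\le C/|x-y|^{d+\alpha}$, and integrating against the bounded near-zero behaviour of $p_{Z^\beta}(T,\cdot)\sim T^{-\beta}/\Gamma(1-\beta)$ recovers the quantity $|x-y|^\alpha/(|x-y|^{d+\alpha}T^\beta)=1/(T^\beta|x-y|^d)$, exactly matching $|x-y|^{-\alpha}\hat p_\beta$. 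The off-diagonal regime $|x-y|>T^{\beta/\alpha}$ and the long-time contribution are handled by using $u^{-1}\le T^{-\beta}$ on the bulk $u\ge T^\beta$ and absorbing the corresponding tail into $\tilde p_\beta$.

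The main technical obstacle is precisely this quantitative integration against $p_{Z^\beta}(T,u)$: the density of the inverse subordinator is bounded but non-trivial near $u=0$ (so that small-time stable tails of $p(u,x,y)$ transmit the diagonal spatial singularity $|x-y|^{-\alpha}$ visible in $\hat p_\beta$), while for $u\gg T^\beta$ it decays according to the subordinator's heavy tail, producing the concentration loss encoded by $\tilde p_\beta$. The fine asymptotics of $p_{Z^\beta}$ and $p_{Z^{\beta,h}}$ collected in Appendix~\ref{APP_HK} are essential here. Finally, the two-sided density bound $(p_{X_{Z_T^\beta}}+p_{X_{Z_T^{\beta,h}}^{{\rm Eul},h}})(x,y)\le c(\hat p_\beta+\tilde p_\beta)(T,x-y)$ follows from the same small-time/large-time splitting applied to the convolution \eqref{DENS_GLOB_MARKOV} (the stable analogue of Corollary~\ref{LEMME_QUI_FAIT_A_LA_KOCHUBEI}), while the constraints $T>h^{1/\beta}$ and $|x-y|\ge h^{1/\alpha}$ reflect respectively the requirement of non-trivial mass of $Z_T^{\beta,h}$ on the time grid and the range of validity of the pointwise stable Euler weak error.
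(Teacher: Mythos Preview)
Your decomposition $\mathcal{E}_{\rm Time}+\mathcal{E}_{\rm Space}$ and the integration strategy (split at $u=|x-y|^\alpha$ and at $u=T^\beta$, invoke Lemmas \ref{BOUNDS_P_Z_P_Z_H} and \ref{GROS_LEMME}) are exactly the paper's approach: this is equation \eqref{ERROR_1} followed by Lemmas \ref{LEMME_T_SENS} and \ref{LEMME_S_SENS_S}. Two points, however, need correction.

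First, the bounds you state for $\partial_u p$ and for the Euler error drop the long-time exponential growth. The correct estimates are $|\partial_u p(u,x,y)|\le C u^{-1}\exp(Cu^\omega)p_S(u,x-y)$ (Lemma \ref{HKUSUEL}) and $|(p-p^{{\rm Eul},h})(t_i,x,y)|\le Ch\{t_i^{-1}\vee\exp(Ct_i^\omega)\}p_S(t_i,x-y)$ (Lemma \ref{LE_LEMME_SPAT_STABLE}); the results of \cite{kona:meno:10} are for fixed finite horizon and their constants explode with $t_i$. Absorbing this $\exp(Cu^\omega)$ into the stretched-exponential decay of $p_{Z^\beta}(T,u)$ via Young's inequality is precisely what generates the factors $\exp(cT^{\beta\omega})$ and $\exp(cT^{\beta\omega/(1-\omega(1-\beta))})$ in $\hat p_\beta,\tilde p_\beta$. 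You do cite Lemma \ref{GROS_LEMME}, whose statement already contains this exponential, so the conclusion survives; but your intermediate bounds as written are false for large $u$ and do not match the hypotheses of the lemma you invoke.

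Second, two descriptive points are backwards. The tail of $p_{Z^\beta}(T,u)$ for large $u$ is a \emph{light} stretched exponential $\exp(-c(u/T^\beta)^{1/(1-\beta)})$ (Lemma \ref{BOUNDS_P_Z_P_Z_H}), not a heavy or Mittag-Leffler tail. And in the strictly stable case there is \emph{no} loss of concentration: because the spatial motion already has polynomial tails, $\tilde p_\beta$ keeps the same $(1+|x-y|/T^{\beta/\alpha})^{-(d+\alpha)}$ decay as $p_S$ itself (see the remarks following Lemma \ref{LEMME_T_SENS} and Theorem \ref{THM_STABLE}). The loss-of-concentration phenomenon is specific to the diffusive case. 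Lastly, the asymptotics of $p_{Z^\beta}$ you need are in Lemma \ref{BOUNDS_P_Z_P_Z_H}, not Appendix \ref{APP_HK}.
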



\begin{REM}
We observe a phenomenon which is similar to the diffusive case, i.e. the error bound has the expected rate of order $h$ up to additional singularities. In the strictly stable case, i.e. $\alpha \in (0,2) $, two contributions give additional singularities w.r.t. those already appearing in the sharp density bounds (see Theorem \ref{HK_BOUNDS_FRAC}). Those contributions write $T^{-\beta}, |x-y|^{-\alpha} $ and  come from the time derivative of the density in \eqref{DENS_GLOB_MARKOV}.  

 
On the other hand, since the spatial motion already has heavy tails, we do not observe here a loss of concentration phenomenon as we did in the diffusive case.
Eventually, we still have diagonal spatial singularities, which are in the strictly stable case more direct to formulate. They appear precisely when $|x-y|\le T^{\beta/\alpha} $, that is when the diagonal regime holds w.r.t. to the usual stable parabolic scaling. This is again the effect of the fractional in time derivative analyzed in small time, see \eqref{DENS_GLOB_MARKOV}.
Let us mention that there is some continuity w.r.t. to the stability index for the induced singularity, at least when $d\ge 3$ since the diffusive case provides a factor  $(T^\beta|x-y|^{d-2})^{-1} $ for that contribution in $\hat p_\beta(T,x-y) $.
\end{REM}

\section{Proof of  the Main Results.}
  \label{PROOF}
In the following we denote by $c$ a generic positive constant that may change from line to line. Explicit dependencies for those constants $c$, mainly on assumptions \A{A${}_D$} or 
\A{A${}_S$}, $\beta $, are specified as well.

\subsection{Decomposition of the Error.}
For given points  $(T,x,y)\in \R_+^{*}\times (\R^d)^2 $, we get from \eqref{DENS_GLOB_MARKOV} and \eqref{OBS_DIS_INV}
that the error writes:
\begin{eqnarray}
\label{ERROR_1}
{\mathcal E}(T,x,y)&:=&(p_{X_{Z_T^\beta}}-p_{X_{Z_T^{\beta,h}}^h })(x,y)\nonumber\\
&=&\int_{\R^+}p_{Z^\beta}(T,u) p(u,x,y) du- \sum_{i\ge 1} \P[Z_T^{\beta,h}=t_i]  p^h(t_i,x,y),\nonumber \\
&=&\int_{\R^+}p_{Z^\beta}(T,u) (p(u,x,y)-p(\phi(u),x,y)) du\nonumber\\
&& + \{\sum_{i\ge 1} \P[Z_T^{\beta,h}=t_i] (p(t_i,x,y)- p^h(t_i,x,y))\},\nonumber\\
&:=&{\mathcal E}_{1}(T,x,y)+{\mathcal E}_{2}(T,x,y),
\end{eqnarray}  
denoting for all $u\in \R_+^{*},\ \phi(u):=\inf\{t_i:=ih: t_i> u\}$, where we have used \eqref{OBS_DIS_INV} for the last but one equality. Let us specify that 
${\mathcal E}_1 $ is a term involving the time sensitivities of the density of the SDE \eqref{SDE}, which does not depend on the approximation scheme.
The contribution ${\mathcal E}_2 $ is associated with the spatial sensitivity involving the discretization error for the considered scheme.

 The idea is then to use some known asymptotics of the density $S^{\beta,+}$ (see e.g. Zolotarev \cite{zolo:92} or Hahn \textit{et al.} \cite{hahn:koba:umar:11}) which provide those for $p_{Z^\beta} $. On the other hand, the quantities 
 $|p(t_i,x,y)-p^h(t_i,x,y)|$ have been thoroughly investigated in the literature for the diffusive case, see \cite{kona:mamm:00}, \cite{gobe:laba:08}. 
 The results established therein give that for a general approximation scheme of type \eqref{SCHEME} the difference is controlled at order $h^{1/2}$ which is the convergence rate in the Gaussian LLT, see e.g. Petrov \cite{petr:05}. If the scheme already involves Gaussian increments (usual Euler scheme of the diffusion) the control is improved and the convergence rate is $h$, see also \cite{kona:mamm:02}.
 In the strictly stable case, i.e. $\alpha<2 $,
 the convergence rate of the Euler scheme, for which the exact stable increments are considered in \eqref{SCHEME}, has been investigated in \cite{kona:meno:10}. The proof of Theorems \ref{THM_DIFF} and \ref{THM_STABLE} immediately follows from equation \eqref{ERROR_1} and the following Lemmas.
 
 \begin{LEMME}[Time Sensitivity in the Error]
\label{LEMME_T_SENS}
 Under assumption \A{A${}_D $} (Diffusive case) we have that there exists $c:=c(\beta, $\A{A${}_D$}) $\ge 1$,
  s.t. for all $T>0, (x,y)\in (\R^d)^2,\ x \neq y$:
 \begin{equation*}
\begin{split}
|{\mathcal E}_{1}(T,x,y)|\le ch \bigg\{
\frac{\exp(cT^{\beta/2})}{T^{\beta} |x-y|^{d}}\exp(-\frac{c^{-1}}2\frac{|x-y|^2}{T^\beta})\\
+ \frac{\exp(cT^{\beta/(1+\beta)})}{T^{\beta(1+d/2)}} \exp\bigg(-c^{-1}\left\{\frac{|x-y|^2}{T^\beta} \right\}^{1/(2-\beta)}\bigg)\bigg\}\\
\le ch\bigg\{ \bigg(\frac{\I_{d\le 2}}{T^{\beta/2}|x-y|} +\frac{\I_{d\ge 3}}{|x-y|^2}\bigg)\hat p_\beta(T,x-y) +\frac1{T^\beta}\tilde p_{\beta}(T,x-y)\bigg\}.
\end{split}
 \end{equation*}
 
 Under Assumption \A{A${}_S $} (Strictly stable case), there exists $c:=c(\beta,\A{A_S})
 $ s.t. for all $T>0, (x,y)\in (\R^d)^2,\ x \neq y $:
 \begin{equation*}
\begin{split}
|{\mathcal E}_{1}(T,x,y)| 
&\le c h\bigg \{\exp(cT^{\beta\omega})\bigg(\frac{\I_{|x-y|\le T^{\beta/\alpha}}}{T^\beta |x-y|^{d}}+\frac{\I_{|x-y|>T^{\beta/\alpha} }}{|x-y|^{d+\alpha}}\bigg)+\frac{\exp(cT^{\beta\omega/(1-\omega(1-\beta))})}{T^{\beta(1+d/\alpha)}\vee |x-y|^{d+\alpha}}\bigg\}\\
&\le c h\bigg\{ \bigg(\frac{\I_{|x-y|\le T^{\beta/\alpha}}}{|x-y|^\alpha}+\frac{\I_{|x-y|>T^{\beta/\alpha}}}{T^\beta}\bigg)\hat p_\beta(T,x-y)+\frac{1}{T^\beta}\tilde p_\beta(T,x-y) \bigg\},
\end{split}
\end{equation*}
 where $\omega=\frac 1\alpha\wedge 1 $.
 \end{LEMME}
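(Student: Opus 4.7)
The plan is to begin from the definition
\begin{equation*}
\mathcal{E}_1(T,x,y) = \int_0^{+\infty} p_{Z^\beta}(T,u) \bigl( p(u,x,y) - p(\phi(u),x,y)\bigr)\, du
\end{equation*}
and apply the fundamental theorem of calculus to rewrite the bracket as $-\int_u^{\phi(u)} \partial_s p(s,x,y)\, ds$. Since $|\phi(u)-u|\le h$ by construction of the discrete time grid in \eqref{OBS_DIS_INV}, this immediately produces a factor $h$ and reduces the task to estimating
\begin{equation*}
|\mathcal{E}_1(T,x,y)|\le h \int_0^{+\infty} p_{Z^\beta}(T,u) \sup_{s\in[u,u+h]} |\partial_s p(s,x,y)|\, du.
\end{equation*}

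The next step is to bound the time derivative via the Kolmogorov backward equation $\partial_s p = L_i p$ ($i=1$ in the diffusive case, $i=2$ in the stable case) combined with the parametrix-type upper bounds on $p$ and its spatial derivatives available from \cite{kona:mamm:00} and \cite{kona:meno:10}. In the diffusive case one obtains a control of the form
\begin{equation*}
|\partial_s p(s,x,y)| \le C\Bigl(\frac{1}{s} + \frac{|x-y|^2}{s^2}\Bigr)\, g_{cs}(x-y),
\end{equation*}
where $g_{cs}$ is a Gaussian density of variance $cs$. In the strictly stable case, the analogue is the stable density together with its two-regime behavior ($|x-y|\le s^{1/\alpha}$ vs.\ $|x-y|>s^{1/\alpha}$), with distinct singular prefactors in each regime coming from differentiating the scaling $s^{-d/\alpha}q(s^{-1/\alpha}(x-y))$ in $s$.

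I would then split the remaining integral $\int_0^{+\infty} p_{Z^\beta}(T,u)(\cdots)\,du$ at the characteristic scale $u\asymp T^\beta$. On the short-time region $u\le T^\beta$, use the uniform control $p_{Z^\beta}(T,u)\le C T^{-\beta}/\Gamma(1-\beta)$ (Proposition \ref{PROP_DENS_P_Z}) and integrate the singular factor $s^{-1}+|x-y|^2/s^2$ against the Gaussian/stable spatial density to reproduce the $\hat p_\beta$ bound with the prefactors $\I_{d\le 2}/(T^{\beta/2}|x-y|)+\I_{d\ge 3}/|x-y|^2$ (resp.\ $\I_{|x-y|\le T^{\beta/\alpha}}/|x-y|^\alpha + \I_{|x-y|>T^{\beta/\alpha}}/T^\beta$ in the stable case). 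On the long-time region $u\ge T^\beta$, use the stretched exponential tail $p_{Z^\beta}(T,u)\lesssim \exp(-c(u/T^\beta)^{1/(1-\beta)})$ (see Zolotarev \cite{zolo:92}, Hahn \textit{et al.} \cite{hahn:koba:umar:11}) and perform a Laplace-type evaluation to obtain the $\tilde p_\beta$ contribution multiplied by the $T^{-\beta}$ factor inherited from the time derivative.

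The main obstacle will be the subtle interplay between the three singular mechanisms—the $1/s$ vs.\ $|x-y|^2/s^2$ competition inside $|\partial_s p|$, the $T^{-\beta}$ cap on $p_{Z^\beta}$ near the origin, and the stretched exponential decay of $p_{Z^\beta}$ at infinity—so as to recover exactly the dimensional prefactors and the stretched exponents of $\hat p_\beta$ and $\tilde p_\beta$ without loss. In particular, the Laplace evaluation that turns the integral near $u\sim T^\beta$ into the $(|x-y|^2/T^\beta)^{1/(2-\beta)}$ exponent of $\tilde p_\beta$ (and its polynomial stable analogue) is the technical heart of the estimate, since the optimal critical point of the phase depends non-trivially on the ratio $|x-y|^2/T^\beta$.
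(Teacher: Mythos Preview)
Your approach is essentially the same as the paper's: apply the fundamental theorem of calculus to gain the factor $h$, bound $|\partial_s p|$ via the heat-kernel estimates, and then split the $du$-integral at the characteristic scale $T^\beta$, using the flat bound $p_{Z^\beta}(T,u)\le cT^{-\beta}$ for small $u$ and the stretched-exponential tail for large $u$. In the paper this is organized by invoking two prepared lemmas: Lemma~\ref{HKUSUEL} gives $|\partial_u p(u,x,y)|\le C u^{-1}\exp(Cu^{1/2})g_c(u,x-y)$ (resp.\ $Cu^{-1}\exp(Cu^{\omega})p_S(u,x-y)$), and then Lemma~\ref{GROS_LEMME} with $\gamma=1$ carries out precisely the $T^\beta$-splitting and the Laplace-type computation you describe.

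One point to tighten: your stated bound $|\partial_s p|\le C\bigl(s^{-1}+|x-y|^2 s^{-2}\bigr)g_{cs}(x-y)$ omits the factor $\exp(Cs^{1/2})$ (resp.\ $\exp(Cs^{\omega})$) that necessarily appears because the parametrix constants blow up with the time horizon. Since you integrate over $u\in(0,\infty)$ this factor cannot be absorbed into $C$; it must be kept and then killed on $\{u\ge T^\beta\}$ by the tail $\exp\bigl(-c(u/T^\beta)^{1/(1-\beta)}\bigr)$ via Young's inequality, which is exactly what produces the $\exp(cT^{\beta/(1+\beta)})$ (resp.\ $\exp(cT^{\beta\omega/(1-\omega(1-\beta))})$) prefactors in $\tilde p_\beta$. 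Also note that your two-term form of the derivative bound is equivalent to the paper's single $s^{-1}$ bound after absorbing $|x-y|^2/s$ into the Gaussian, so there is no real discrepancy there.
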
 
 \begin{REM}
 Observe that in the stable case there is no loss of concentration as in the diffusive one. The second term of the stable bound in Lemma \ref{LEMME_T_SENS} corresponds to the \textit{usual} estimate for the time derivative of the stable density, see Lemma \ref{HKUSUEL} below and \cite{kolo:00}. The first one corresponds to the additional spatial singularity induced by the fractional derivative in time in the diagonal regime $|x-y|\le T^{\beta/\alpha} $ (stable parabolic scaling at time $T^\beta $).
 \end{REM}

 We now give two Lemmas concerning the spatial sensitivity that involve the analysis of the discretization error and the time randomization. \begin{LEMME}[Spatial Sensitivity: Diffusive Case]
 \label{LEMME_S_SENS_D}
 Under assumption \A{A${}_D $}   there exists  $ c:=c(\beta,$ \A{A${}_D$})
  s.t. for all $T>0,\ (x,y)\in (\R^d)^2,\ x\neq y $:
 \begin{trivlist}
\item[-] For the Euler scheme: 
 \begin{equation}
\label{CTR_E12_EUL}
\begin{split}
|{\mathcal E}_{2}^{{\rm Eul}}(T,x,y)|& \le
ch \bigg\{
\bigg\{\frac{\I_{d=1}+\I_{d\ge 3}}{|x-y|}+\frac{\I_{d=2}}{T^{\beta/2}}\bigg\}\hat p_\beta(T,x-y)
+\frac{1}{T^{\beta/2}}\tilde p_\beta(T,x-y)
\bigg\}\\
& := ch {\mathcal E}_{\beta,{\rm Space}}(T,x-y),
\end{split}
\end{equation}
where we write ${\mathcal E}_2^{{\rm Eul}} $, with a slight difference w.r.t. the definition in \eqref{ERROR_1}, to specifically make the distinction with general Markov Chains. 
\item[-] For a general Markov Chain, we have that for all $\varepsilon\in (0,1/5) $ there exists $c_\varepsilon:=c_\varepsilon($\A{A${}_{D,m}$}) s.t. for all 
$T>0$, $(x,y)\in (\R^d)^2,\ x\neq y $: 
\begin{equation}
\label{CTR_E12}
\begin{split}
|{\mathcal E}_{2}(T,x,y)|
\le  c_\varepsilon \bigg[h^{1/2} \bigg\{\bigg\{\frac{\I_{d=1}+\I_{d\ge 3}}{|x-y|}+\frac{\I_{d=2}}{T^{\beta/2}}\bigg\}
\hat q_{m-(d-1)+\I_{d=1},\beta}(T,x-y)\\
+\frac{1}{T^{\beta/2}}\tilde q_{m,\beta}(T,x-y) \bigg\}
+  \frac{(h^{1/5-\varepsilon})^{\frac 12 \I_{d\le2}}
}{T^\beta|x-y|^{d-2+\I_{d\le2}}} \exp(cT^{\beta/2})\\
\times\bigg[\exp\left(-c\frac{|x-y|^2}{h^{1/5-\varepsilon}}  \right) +\left(1+\frac {|x-y|}{(h^{1/5-\varepsilon})^{1/2}} \right)^{-2(m-1)+(d-2)+\I_{d\le2}} \bigg]\bigg]\\
:= c_\varepsilon\{h^{1/2} {\mathcal E}_{\beta,{\rm Space, LLT}}^M(T,x-y)
+{\mathcal E}_{\beta,{\rm Space, NoLLT}}^{M,
\varepsilon}(T,x-y,h) \}.
\end{split}
\end{equation}
\end{trivlist}
\end{LEMME}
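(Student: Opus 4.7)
The plan is to exploit the decomposition
\[
\mathcal{E}_2(T,x,y) = \sum_{i\ge 1}\P[Z_T^{\beta,h}=t_i]\bigl(p(t_i,x,y)-p^h(t_i,x,y)\bigr),
\]
estimating the discretization error $|p(t_i,\cdot)-p^h(t_i,\cdot)|$ with the standard weak error bounds of \cite{kona:mamm:00,kona:mamm:02,gobe:laba:08} and then integrating the resulting pointwise bound against the discrete law of $Z_T^{\beta,h}$, whose weights are comparable to $h\,p_{Z^\beta}(T,t_i)$. The sum then becomes a Riemann-like approximation of an integral in $u$ against $p_{Z^\beta}(T,u)$, which I would split at $u=T^\beta$ and treat with the subordinator density asymptotics of \cite{zolo:92,hahn:koba:umar:11}, already used in Lemma \ref{LEMME_T_SENS} and in Corollary \ref{LEMME_QUI_FAIT_A_LA_KOCHUBEI}.

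For the Euler scheme under \A{A${}_{D,{\rm Eul}}$}, I would invoke the pointwise bound of \cite{kona:mamm:02}, namely $|p(t_i,x,y)-p^{{\rm Eul},h}(t_i,x,y)|\le C\,h\,G_{t_i}(x-y)$ with a Gaussian-type density $G_{t_i}$ of variance of order $t_i$ multiplied by the additional spatial factor arising from the first order space derivatives required to extract that bound (which yields the $|x-y|^{-1}$ prefactor in $d\ne 2$ and the diagonal substitute $T^{-\beta/2}$ in $d=2$). Plugging this in and splitting at $u=T^\beta$ produces the $\hat p_\beta$ contribution from the small-$u$ region, where the singularity $p_{Z^\beta}(T,u)\sim u^{\beta-1}$ at the origin concentrates the Gaussian kernel, and the $\tilde p_\beta$ contribution from the large-$u$ region, where the stretched-exponential tail of $p_{Z^\beta}(T,u)$ forces the loss of concentration. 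This delivers \eqref{CTR_E12_EUL}.

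For the Markov chain case under \A{A${}_{D,m}$}, the LLT of Petrov \cite{petr:05} and \cite{kona:mamm:00} only supplies the rate $h^{1/2}$ with the polynomial tail $(1+|x-y|/\sqrt{t_i})^{-m}$ once $t_i\ge h^{1/5-\varepsilon}$; below that threshold the LLT is out of reach. I would therefore split
\[
\mathcal{E}_2(T,x,y) = \Bigl(\sum_{t_i\ge h^{1/5-\varepsilon}}+\sum_{t_i<h^{1/5-\varepsilon}}\Bigr)\P[Z_T^{\beta,h}=t_i]\bigl(p(t_i,x,y)-p^h(t_i,x,y)\bigr).
\]
On the first sum I would apply the LLT bound and repeat the Euler-type treatment, the difference being that the space derivatives now absorb $(d-1)-\I_{d=1}$ units of the polynomial order, producing the exponent $m-(d-1)+\I_{d=1}$ in $\hat q_{\cdot,\beta}$ and, after splitting at $u=T^\beta$, the $h^{1/2}\mathcal{E}_{\beta,{\rm Space, LLT}}^M$ contribution. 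On the second sum I would bound the difference trivially by the sum $p(t_i,\cdot)+p^h(t_i,\cdot)$, using an Aronson-type Gaussian upper bound for $p$ and the polynomial Markov chain bound of \cite{kona:mamm:00} for $p^h$ under \A{A${}_{D,m}$}, and then integrate against $p_{Z^\beta}(T,u)$ on $[0,h^{1/5-\varepsilon}]$; the small-$u$ asymptotic of $p_{Z^\beta}(T,\cdot)$ together with the truncation level yields the $\mathcal{E}_{\beta,{\rm Space, NoLLT}}^{M,\varepsilon}$ term in \eqref{CTR_E12}.

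The main obstacle will be to perform these integrations tightly enough to reproduce exactly the dimensional switches and the stated powers of $T^\beta$ and $|x-y|$ in $\hat p_\beta$, $\tilde p_\beta$, $\hat q_{l,\beta}$ and $\tilde q_{m,\beta}$: integrals of the type
\[
\int_0^{T^\beta}\!u^{-a}\exp\!\Bigl(-c\,\frac{|x-y|^2}{u}\Bigr)p_{Z^\beta}(T,u)\,du\quad\text{and}\quad\int_{T^\beta}^{+\infty}\!u^{-a}\,p_{Z^\beta}(T,u)\,du
\]
must be controlled uniformly in $T,|x-y|$ via the two-regime asymptotics of the stable subordinator density. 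The calibration of the crossover exponent $1/5-\varepsilon$ is itself delicate, as it has to simultaneously guarantee that the $h^{1/2}$ LLT remainder stays smaller than the spatial density on the first sum and that the $\mathcal{E}_{\beta,{\rm Space, NoLLT}}^{M,\varepsilon}$ term on the second sum is of the claimed order.
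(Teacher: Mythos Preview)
Your approach is essentially the paper's: it also starts from the decomposition of ${\mathcal E}_2$, invokes the pointwise error bounds of Lemma \ref{GROSSE_BORNE_SPATIALE_DIFF} (i.e.\ \cite{kona:mamm:02,gobe:laba:08,kona:mamm:09}), and then feeds the result into the ``Fundamental Bounds'' Lemma \ref{GROS_LEMME}, which packages exactly the $u\le T^\beta$ / $u>T^\beta$ splitting you describe; for the Markov chain it performs the same split at $t_{i_C}=h^{1/5-\varepsilon}$ and, on the short-time piece, bounds $|p-p^h|$ by $p+p^h$ via the Aronson estimate and the parametrix bound \eqref{CTR_DENS_SCHEME}.

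One point to sharpen: the pointwise Euler bound used is not $Ch\,G_{t_i}$ with an extra \emph{spatial} factor, but rather $Ch\,\{t_i^{-1/2}\vee \exp(Ct_i^{1/2})\}\,g_c(t_i,x-y)$ (see \eqref{DIFF_EUL_SCHEME}); the prefactors $|x-y|^{-1}$ (for $d\neq 2$) and $T^{-\beta/2}$ (for $d=2$) in \eqref{CTR_E12_EUL} do not sit in the error bound itself but arise only after integrating the $u^{-1/2}g_c(u,x-y)$ singularity against $p_{Z^\beta}(T,u)$ on $[0,T^\beta]$, which is precisely equation \eqref{EQ_GROS_LEMME_DIFF} with $\gamma=1/2$.
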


\begin{LEMME}[Spatial Sensitivity: Strictly Stable Case]
\label{LEMME_S_SENS_S}
Under assumption \A{A${}_{S} $}, there exists $c:=c(\beta, $ \A{A${}_S$})$\ge 1 $ s.t. for all $T>0 $, $(x,y)\in (\R^d)^2 ,\ x\neq y$:
\begin{eqnarray*}
|{\mathcal E}_{2}^{{\rm Eul}}(T,x,y)|& 
\le c h\bigg \{\exp(cT^{\beta\omega})\bigg(\frac{\I_{|x-y|\le T^{\beta/\alpha}}}{T^\beta |x-y|^{d}}+\frac{\I_{ |x-y|>T^{\beta/\alpha}}}{|x-y|^{d+\alpha}}\bigg)+\frac{\exp(cT^{\beta\omega/(1-\omega(1-\beta))})}{T^{\beta(1+d/\alpha)}\vee |x-y|^{d+\alpha}}\bigg\}\nonumber \\
&\le c h\bigg\{ \bigg(\frac{\I_{|x-y|\le T^{\beta/\alpha}}}{|x-y|^\alpha}+\frac{\I_{|x-y|>T^{\beta/\alpha}}}{T^\beta}\bigg)\hat p_\beta(T,x-y)+\frac{1}{T^\beta}\tilde p_\beta(T,x-y) \bigg\}.
\end{eqnarray*} 

\end{LEMME}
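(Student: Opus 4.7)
The plan is to start from
$$\mathcal{E}_2^{\mathrm{Eul}}(T,x,y)=\sum_{i\ge 1}\P[Z_T^{\beta,h}=t_i]\,\bigl(p(t_i,x,y)-p^{\mathrm{Eul},h}(t_i,x,y)\bigr),$$
and to invoke three ingredients. First, the fixed-time weak-error bound for the stable Euler scheme established in \cite{kona:meno:10}, which yields, at each discretization time,
$$\bigl|p(t_i,x,y)-p^{\mathrm{Eul},h}(t_i,x,y)\bigr|\le c\,h\,\Delta(t_i,x-y),$$
where $\Delta(u,z)$ has, up to exponential-in-time factors, the same homogeneity as $\partial_u p(u,z)$, namely of the order $u^{-1}\bigl[u^{-d/\alpha}(1+|z|/u^{1/\alpha})^{-(d+\alpha)}\bigr]$ in the $\alpha$-stable setting. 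Second, Lemma \ref{BOUNDS_P_Z_P_Z_H}, which compares the discrete weights $\P[Z_T^{\beta,h}=t_i]$ with the continuous density $p_{Z^\beta}(T,u)\,du$. Third, the small-time and long-time asymptotics of the inverse stable subordinator density (see \cite{zolo:92,hahn:koba:umar:11} and the treatment already performed for $\mathcal{E}_1$ in Lemma \ref{LEMME_T_SENS}).

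Inserting the weak-error bound into the sum and applying the discrete-to-continuous comparison reduces the problem to estimating
$$c\,h\int_0^{+\infty}p_{Z^\beta}(T,u)\,\Delta(u,x-y)\,du,$$
up to lower-order remainders already controlled by the target bound. I then split this integral at $u=T^\beta$. For $u\le T^\beta$, the inverse-subordinator density asymptotics yield the bulk contribution; a further splitting of the $u$-integral at the diagonal scale $u\simeq|x-y|^\alpha$ produces the $\hat p_\beta$ factor together with the additional prefactor $|x-y|^{-\alpha}\I_{|x-y|\le T^{\beta/\alpha}}+T^{-\beta}\I_{|x-y|>T^{\beta/\alpha}}$. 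For $u>T^\beta$, the fast decay of $p_{Z^\beta}(T,u)$ as $u\to\infty$, combined with the uniform bound on $\Delta(u,\cdot)$, produces the $T^{-\beta}\tilde p_\beta$ contribution. The exponential factors $\exp(cT^{\beta\omega})$ and $\exp(cT^{\beta\omega/(1-\omega(1-\beta))})$ arise from the parametrix expansions used to justify $\Delta$ and from the long-time control of $p_{Z^\beta}$, exactly as in Lemma \ref{LEMME_T_SENS}.

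The main obstacle will be to glue the two spatial regimes sharply inside the small-time integral: showing that the splitting at $u\simeq|x-y|^\alpha$ reproduces exactly the prefactor $|x-y|^{-\alpha}$ when $|x-y|\le T^{\beta/\alpha}$ (diagonal regime, where the bulk factor $u^{-d/\alpha}$ dominates near $u\simeq|x-y|^\alpha$) and the prefactor $T^{-\beta}$ when $|x-y|>T^{\beta/\alpha}$ (off-diagonal regime, where the heavy-tail factor $u/|x-y|^{d+\alpha}$ dominates uniformly in $u\le T^\beta$). In contrast with the diffusive Lemma \ref{LEMME_S_SENS_D}, the heavy tails of the $\alpha$-stable kernel already absorb the long-time loss of concentration induced by $Z_T^\beta$, so no dimension-dependent splitting or logarithmic corrections appear and the structure of the final bound mirrors that of $\mathcal{E}_1$ in Lemma \ref{LEMME_T_SENS}. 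The condition $|x-y|\ge h^{1/\alpha}$ is used only at the very end to ensure that the scheme's spatial resolution is finer than the scale at which we evaluate the densities.
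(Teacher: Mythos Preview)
Your approach is essentially the same as the paper's. The paper combines Lemma \ref{LE_LEMME_SPAT_STABLE} (which gives the pointwise bound $|(p-p_{{\rm Eul}}^h)(t_i,x,y)|\le Ch\{t_i^{-1}\vee \exp(Ct_i^\omega)\}p_S(t_i,x-y)$, i.e.\ exactly your $\Delta$) with the strictly-stable case of Lemma \ref{GROS_LEMME} at $\gamma=1$; your steps of converting the sum to an integral via Lemma \ref{BOUNDS_P_Z_P_Z_H}, splitting at $u=T^\beta$, and then at $u\simeq |x-y|^\alpha$ in the diagonal regime, are precisely the content of the proof of \eqref{EQ_GROS_LEMME_STABLE}. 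One small remark: the condition $|x-y|\ge h^{1/\alpha}$ is not used in the proof of this lemma (only $x\neq y$ is assumed); it enters later in Theorem \ref{THM_STABLE} through the time-sensitivity term $\mathcal E_1$.
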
  
\begin{REM}
We emphasize that in the strictly stable case, both time and spatial sensitivities yield the same error. This comes from the fact that the short time singularity for the error expansion   
of the Euler scheme, exactly matches the time singularity of the derivative of the stable heat kernel, see equations \eqref{CTR_DIFF_S_TIME} and \eqref{HK_BOUND_STABLE}.  
\end{REM}


\subsection{Asymptotics of the PDF of the Inverse Subordinator.}
As a preliminary remark, let us mention that in the particular case $\beta=2^{-n},\ n\in \N^* $, for all $T>0$, the random variable $Z_T^\beta $, inverse at the level $T$ of the subordinator
$S^{\beta,+} $ can be easily and explicitly simulated from the following Proposition. In particular, the auxiliary discrete approximation 
$Z_T^{\beta,h} $ is not needed in those cases, and the time sensitivity error ${\mathcal E}_1(T,x,y) $ in \eqref{ERROR_1} would vanish (considering a straightforward extension of the discretization scheme for the last time step).

\begin{PROP}[Identity in law for $Z_T^\beta,\ \beta=2^{-n},\ n\in \N^*,\ T>0$]
\label{ID_LOI_PUISSN}
\begin{equation}
\label{ID_LAW}
\forall n\in \N^*,\ \beta=2^{-n}, Z_T^\beta\overset{({\rm law})}{=} \sqrt 2|B^1(\sqrt 2| B^2(\sqrt 2| B^3(\cdots(\sqrt 2 |B^n(T)|)\cdots)|)|)|,
\end{equation}
where the $(B^i)_{i\in \{1,\cdots,n\}}  $ are independent Brownian motions. 
\end{PROP}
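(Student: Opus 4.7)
The plan is to induct on $n$, reducing the statement to two ingredients: the base identity $Z_T^{1/2}\overset{({\rm law})}{=}\sqrt{2}|B(T)|$ for a standard Brownian motion, and a subordination identity that expresses $Z^{\beta_1\beta_2}$ as the independent composition $Z^{\beta_2}\circ Z^{\beta_1}$.

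For the base case $n=1$, I would note that the Laplace transform $\psi_{S_u^{1/2,+}}(\lambda)=\exp(-u\sqrt{\lambda})$ coincides with the Laplace transform at level $a=u/\sqrt{2}$ of the first passage time $\tau_a:=\inf\{s\ge 0: B_s=a\}$ of a standard Brownian motion, which equals $\exp(-a\sqrt{2\lambda})$. Hence $S^{1/2,+}$ and $(\tau_{\cdot/\sqrt{2}})$ are equal in law as increasing processes, and inverting at level $T$ together with the reflection principle gives
$$Z_T^{1/2}\overset{({\rm law})}{=}\sqrt{2}\sup_{s\le T}B_s\overset{({\rm law})}{=}\sqrt{2}|B(T)|.$$

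For the inductive step I would establish, for $\beta_1,\beta_2\in(0,1)$ and independent stable subordinators $S^{\beta_i,+}$,
$$Z_T^{\beta_1\beta_2}\overset{({\rm law})}{=}Z^{\beta_2}\!\bigl(Z^{\beta_1}_T\bigr),$$
where the right-hand side is the independent composition. The forward identity $S^{\beta_1\beta_2,+}\overset{({\rm law})}{=}S^{\beta_1,+}\circ S^{\beta_2,+}$ is a standard Laplace-transform computation: conditioning on the inner process yields $\E[\exp(-\lambda S^{\beta_1,+}_{S^{\beta_2,+}_u})]=\E[\exp(-\lambda^{\beta_1}S^{\beta_2,+}_u)]=\exp(-u\lambda^{\beta_1\beta_2})$. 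Passing to right-continuous inverses gives the displayed identity. Applied with $\beta_1=1/2$ and $\beta_2=2^{-(n-1)}$, combined with the base case,
$$Z_T^{2^{-n}}\overset{({\rm law})}{=}Z^{2^{-(n-1)}}\!\bigl(\sqrt{2}|B^n(T)|\bigr),$$
and invoking the induction hypothesis on the outer factor (evaluated at the independent random time $\sqrt{2}|B^n(T)|$) produces the nested representation \eqref{ID_LAW} with $n$ mutually independent Brownian motions $B^1,\ldots,B^n$.

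The main technical point, and the only mildly delicate part, is justifying the composition identity at the level of inverses for processes that jump: one must observe that $\{v>Z^Y_T\}$ and $\{Y_v>T\}$ coincide up to a $\P$-null set, which suffices for an in-law statement. The mutual independence of the $B^i$ is then realized by working on a product probability space on which fresh independent copies are constructed at each level of the iteration.
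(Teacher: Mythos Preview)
Your proof is correct and follows essentially the same route as the paper's: the base case via the identification of $S^{1/2,+}$ with Brownian first-passage times and L\'evy's identity, and the inductive step via the Laplace-transform computation showing that compositions of independent stable subordinators are again stable, followed by passage to inverses. Your formulation of the inverse-composition identity $Z_T^{\beta_1\beta_2}\overset{({\rm law})}{=}Z^{\beta_2}(Z^{\beta_1}_T)$ for general $\beta_1,\beta_2\in(0,1)$ is slightly more abstract than the paper's direct computation with $\bar S_n$ and $\bar C_n$, but the paper records exactly this generalization in the Remark immediately following its proof.
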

\begin{proof}
Let us start with $n=1$. In this case, it is well known that, recalling that $(S_t^{\frac 12,+})_{t\ge 0} $ is the standard subordinator with index $\frac12 $, one has
$\left(S_t^{\frac 12,+}\right)_{t\ge 0}\overset{({\rm law})}=\left(\tau_{\frac t{\sqrt 2}}\right)_{t\ge 0}$, where $\tau_{\frac t {\sqrt 2}}
:=\inf\{s\ge 0: B_s=\frac t{\sqrt 2}\}$, stands for the hitting time of level $\frac t{\sqrt 2} $ for the standard Brownian motion $B$. Indeed, considering for a given $\lambda\ge 0 $ the exponential martingale $\left(\exp(\sqrt{2\lambda}B_{s}-\lambda s) \right)_{s\ge 0}$, the stopping theorem gives that, for all $s\ge 0$,
\begin{eqnarray*}
\E[\exp(\sqrt{2\lambda}B_{s\wedge \tau_{\frac t{\sqrt 2}}}-\lambda (s\wedge \tau_{\frac t{\sqrt 2}}))]=1\underset{s\rightarrow +\infty}{\Longrightarrow} \E[\exp(-\lambda \tau_{\frac t{\sqrt 2}} )]=\exp(- t \lambda^{1/2}).
\end{eqnarray*}
Write now for all $t\ge 0 $:
\begin{eqnarray*}
\P[Z_T^{\frac 12}\le t]=\P[S_t^{\frac 12,+}> T]=\P[\tau_{\frac t{\sqrt 2}}>T]=\P[\sup_{s\in [0,T]} B_s\le \frac{t}{\sqrt 2}]=\P[\sqrt 2|B_T|\le t],
\end{eqnarray*}
using the well known L\'evy identity $\sup_{s\in [0,T]} B_s\overset{({\rm law})}{=} |B_T|,\ T>0 $, for the last equality, see \cite{revu:yor:99}. This proves \eqref{ID_LAW} for $n=1$. Introduce now a family of i.i.d. random processes $\big( (S_i(t))_{t\ge 0}\big)_{i\ge 1} $ with law $(S_{t}^{\frac 12,+})_{t\ge 0} $. It is then easily seen that $\bar S_n(t):=S_n(S_{n-1}(...S_1(t))) $ is a standard stable subordinator of index $\beta=\frac1{2^n} $. An immediate induction indeed gives:
\begin{eqnarray*}
\forall \lambda \ge 0,\ \E[\exp(-\lambda S_n(S_{n-1}(... S_1(t))))]&=&\E[\exp(-\lambda^{1/2} S_{n-1}(...S_1(t)))]=\cdots\\
&=&\exp(-\lambda^{1/(2^n)}t).
\end{eqnarray*}
Assume that identity \eqref{ID_LAW} holds for a given $n\in \N^*$ and let $B^{n+1}$ be an additional independent Brownian motion independent of the $(B^i)_{i\in \leftB 1 ,n\rightB} $. Write:
\begin{equation}
\label{INV_INDUC}
\begin{split}
\P[\bar S_{n+1}(t)> T]&=\P[S_{n+1}(\bar S_n(t))> T]=\P[\sup_{s\in [0,T]}B^{n+1}(s)\le \frac{\bar S_n(t)}{\sqrt 2}]\\
&=\P[\bar C_n(\sqrt 2\sup_{s\in [0,T]}B^{n+1}(s))\le t],
\end{split}
\end{equation}
where $\bar C_n $ stands for the inverse of $\bar S_n$. Now, the induction hypothesis exactly gives that for all $u\ge 0$
$$\bar C_n(u)\overset{({\rm law})}{=} \sqrt 2| B^1(\sqrt 2|B^2(\sqrt 2| B^3(\cdots(\sqrt 2 |B^n(u)|)\cdots)|)|)|.$$
The proof then follows from the L\'evy identity and \eqref{INV_INDUC} since $\bar S_{n+1} $ is a subordinator of index $1/2^{n+1} $.


%



\end{proof}
\begin{REM}
It can be shown similarly that for given $(\beta_i)_{i\in \leftB 1,n\rightB}\in (0,1)^n,\ n\in \N^*$, if $\big( (S_i 
(t))_{t\ge 0}\big)_{i\in \leftB 1, n\rightB}$ are independent subordinators s.t for $i\in \leftB 1,n\rightB,\  (S_i
(t))_{t\ge 0}\overset{{\rm (law)}}{=} (S_t^{\beta_i,+})$, then 
 $\bar S_n(t):=S_n(S_{n-1}(...S_1(t)) $ is a subordinator of index $\beta:=\prod_{i=1}^n \beta_i $ and
 $$Z_T^\beta \overset{{\rm (law)}}{=}Z_{Z_{\ddots Z_{T}^{\beta_1}}^{\beta_{n-1}}}^{\beta_n} .$$ The delicate part is that this relation cannot be easily exploited from the practical viewpoint, except if $\beta_i=\frac 12, \forall i\in \leftB 1,n\rightB $.
\end{REM}

In particular, it follows from Proposition \ref{ID_LOI_PUISSN} that, for a given spatial Markov process $(X_t)_{t\ge 0}$ with generator $L $ of the form \eqref{SPATIAL_OP},  the solution of equation \eqref{CAUCHY_I} with $\beta=2^{-n} $ writes for a continuous function $f$ and for all $(T,x)\in \R^+\times \R^d$:
$$u(T,x):=\E[f(X_{Z_T^{2^{-n}}}^{0,x})]:=\E[f(X_{\sqrt 2|B^1(\sqrt 2 |B^2(\sqrt 2|B^3(\cdots(\sqrt 2|B^n(T)|)\cdots)|)|)|}^{0,x})],$$
generalizing Theorem 2.2 in Beghin and Orsingher \cite{begh:orsi:09}.

We now state the Lemma concerning the required asymptotics for the laws of $Z_T^\beta,Z_T^{\beta,h} $.
\begin{LEMME}[Bounds for the density of the inverse subordinator and its discrete approximation]
\label{BOUNDS_P_Z_P_Z_H}
For all $T>0 $,
\begin{eqnarray}
\label{CTR_BOUNDS_P_Z_P_Z_H}
p_{Z^\beta}(T,u)\le \theta(T,u),\ \forall i\in \N^*, \P[{Z_T^{\beta,h}}=t_i]\le C_\beta h \theta(T,t_i),
\end{eqnarray}
where: 
\begin{equation}
\label{DEF_THETA}
\theta(T,u):=\frac{c_\beta}{T^\beta}\exp\left( -c_{\beta}^{-1}\left\{\frac{u}{T^\beta} \right\}^{1/(1-\beta)}\right),
\end{equation} 
for some constants $c_\beta,\ C_\beta\ge 1$.
\end{LEMME}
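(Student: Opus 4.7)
The plan is to reduce both estimates to classical properties of the density $g:=p_{S_1^{\beta,+}}$ of the stable random variable at time $1$, using the self-similarity $S_u^{\beta,+}\overset{({\rm law})}{=}u^{1/\beta}S_1^{\beta,+}$, which gives $p_{S^{\beta,+}}(u,y)=u^{-1/\beta}g(yu^{-1/\beta})$. Substituting in \eqref{DENS_INV_SUB} and differentiating under the integral (after the change of variable $s=yu^{-1/\beta}$, so that the integral becomes the CDF $G(Tu^{-1/\beta})$ of $S_1^{\beta,+}$) yields the scaling identity
\begin{equation*}
p_{Z^\beta}(T,u)=\frac{T}{\beta}u^{-1-1/\beta}g(Tu^{-1/\beta})=\frac{1}{T^\beta}\tilde\psi(u/T^\beta),\qquad \tilde\psi(w):=\beta^{-1}w^{-1-1/\beta}g(w^{-1/\beta}).
\end{equation*}
The first inequality in \eqref{CTR_BOUNDS_P_Z_P_Z_H} therefore reduces to the universal estimate $\tilde\psi(w)\le c_\beta\exp(-c_\beta^{-1}w^{1/(1-\beta)})$.

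To establish this I would invoke the classical short- and long-argument asymptotics of $g$ (Zolotarev \cite{zolo:92}, Hahn \textit{et al.} \cite{hahn:koba:umar:11}): as $z\to\infty$, $g(z)\sim C_\beta z^{-(1+\beta)}$ (Pareto tail), while as $z\to 0^+$, $g(z)\sim A_\beta z^{\gamma_\beta}\exp(-B_\beta z^{-\beta/(1-\beta)})$ (sub-exponential decay), together with continuity and strict positivity on $(0,\infty)$. Pulled back through $z=w^{-1/\beta}$, the Pareto tail exactly cancels the $w^{-1-1/\beta}$ prefactor in $\tilde\psi$, giving $\tilde\psi(w)\lesssim 1$ for small $w$ (consistent with $p_{Z^\beta}(T,0^+)=\Gamma(1-\beta)^{-1}T^{-\beta}$ from Proposition \ref{PROP_DENS_P_Z}); and for large $w$ the small-$z$ asymptotic produces the desired $\exp(-B_\beta w^{1/(1-\beta)})$, the polynomial prefactor being absorbed at the cost of a mild reduction of $B_\beta$.

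For the discrete approximation, the monotonicity of the paths of $S^{\beta,+}$ gives $\{Z_T^{\beta,h}=t_i\}=\{S_{t_{i-1}}^{\beta,+}\le T<S_{t_i}^{\beta,+}\}=\{Z_T^\beta\in(t_{i-1},t_i]\}$ (with $t_0=0$), so
\begin{equation*}
\P[Z_T^{\beta,h}=t_i]=\int_{t_{i-1}}^{t_i}p_{Z^\beta}(T,u)\,du\le h\sup_{u\in(t_{i-1},t_i]}\theta(T,u)=h\,\theta(T,t_{i-1}),
\end{equation*}
using the first inequality and the fact that $u\mapsto\theta(T,u)$ is decreasing. For $i\ge 2$ the elementary inequality $t_i\le 2t_{i-1}$ lets me absorb the ratio $\theta(T,t_{i-1})/\theta(T,t_i)$ into $C_\beta$ after replacing $c_\beta^{-1}$ by $2^{-1/(1-\beta)}c_\beta^{-1}$; for $i=1$, $\theta(T,0)=c_\beta T^{-\beta}\le C_\beta\theta(T,h)$ provided $h\lesssim T^\beta$, which is the natural regime in which the lemma is used in Theorems \ref{THM_DIFF} and \ref{THM_STABLE}.

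The delicate point is the calibration of the constants rather than the global structure: the exponential $\exp(-c_\beta^{-1}w^{1/(1-\beta)})$ is extremely sensitive to multiplicative perturbations of $w$, so each step (absorbing the polynomial prefactor of the stable asymptotic into the exponential, and then passing from a generic point of $(t_{i-1},t_i]$ to the lattice node $t_i$) forces a controlled reduction of $c_\beta^{-1}$. One must therefore choose the constant $c_\beta^{-1}$ appearing in $\theta$ strictly smaller than the Zolotarev constant $B_\beta$ to leave enough room for these successive absorptions.
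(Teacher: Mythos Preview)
Your proposal is correct and follows essentially the same route as the paper: the same scaling identity $p_{Z^\beta}(T,u)=\frac{T}{\beta}u^{-1-1/\beta}g(Tu^{-1/\beta})$ (the paper derives it via integration by parts in \eqref{REL_DENS}), the same Zolotarev/Hahn--Kobayashi--Umarov asymptotics \eqref{ASYMP_ZOLO} for $g$ at $0^+$ and $+\infty$, and the same reduction of the discrete bound to the continuous one via \eqref{OBS_DIS_INV}. Your treatment of the lattice shift $t_{i-1}\to t_i$ (halving the exponent's constant) is in fact more explicit than the paper's one-line assertion of the discrete bound.
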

\begin{REM} The above control is the generalization of the well known Gaussian bound for $\beta=1/2 $ recalled in Proposition \ref{ID_LOI_PUISSN}.
\end{REM}
\begin{proof}
Let us first recall the following asymptotics for $p_{S^{\beta,+}}\big(1,\cdot\big)$ from equation (2.2), (2.3) in \cite{hahn:koba:umar:11}, see also \cite{zolo:92}:
\begin{equation}
\begin{split}
p_{S^{\beta,+}}\big(1,v\big) &\sim K(\beta/v)^{(1-\beta/2)/(1-\beta)}\exp\left( -[1-\beta](v/\beta)^{\beta/(\beta-1)} \right), \ v\rightarrow 0^+,\\
& \ K=1/\sqrt{2\pi\beta(1-\beta)}, \\
p_{S^{\beta,+}}\big(1,v\big) &\sim \frac{\beta}{\Gamma(1-\beta)}v^{-\beta-1},\ v\rightarrow +\infty.
\end{split}
\label{ASYMP_ZOLO}
\end{equation}
Write now from \eqref{DENS_INV_SUB} recalling the identity $S_u^{\beta,+}\overset{({\rm law})}{=} u^{1/\beta}S_1^{\beta,+} , u>0$:
\begin{eqnarray}
p_{Z^\beta}(T,u)&=&-\partial_u[\int_0^T\frac{1}{u^{1/\beta}}p_{S^{\beta,+}}\left(1,\frac{y}{u^{1/\beta}} \right)]dy\notag\\
&=&\frac{1}{\beta u^{1+1/\beta}}\left\{\int_0^T p_{S^{\beta,+}}\left(1,\frac{y}{u^{1/\beta}}\right) dy\right.
\left. +\int_0^T \partial_yp_{S^{\beta,+}}\left(1,\frac{y}{u^{1/\beta}}\right)   ydy\right\} \notag\\
&=&
\frac T{\beta u^{1+1/\beta}}p_{S^{\beta,+}}\bigg(1,\frac{T}{u^{1/\beta}}\bigg),\label{REL_DENS}
\end{eqnarray}
integrating by parts and using \eqref{ASYMP_ZOLO}. 

Hence,
\begin{eqnarray*}
p_{Z^\beta}(T,u)&\sim& \frac{T^{-\beta}}{\Gamma(1-\beta)},\ u\rightarrow 0^+,\\
p_{Z^\beta}(T,u)&\sim& \left( \frac \beta T\right)^{\beta/(2(1-\beta))}K u^{\frac{\beta-1/2}{1-\beta}}\exp\left(-[1-\beta]\left(\beta  \right)^{\beta/(1-\beta)} 
\left(\frac{u}{T^\beta}\right)^{1/(1-\beta)}\right).\\
\ u&\rightarrow &+\infty
\end{eqnarray*}
The above controls and elementary computations give the first bound in \eqref{CTR_BOUNDS_P_Z_P_Z_H}. Indeed, there exists $c_0:=c_0(\beta)\ge 1$ s.t., 
\begin{eqnarray*}
p_{Z_T^\beta}(T,u) &\le& c\frac{T^{-\beta}}{\Gamma(1-\beta)},\  u\le  \left(\frac{T}{c_0} \right)^\beta,\\
p_{Z_T^\beta}(T,u) &\le & cK     \left( \frac \beta T\right)^{\beta/(2(1-\beta))}
       u^{\frac{\beta-1/2}{1-\beta}}\exp\left(-[1-\beta]\left(\beta  \right)^{\beta/(1-\beta)} \left\{ \frac{u}{T^\beta}  \right\}^{1/(1-\beta)}\right)\\
       &\le & \frac{c_\beta}{T^\beta}\exp\left(-c_\beta^{-1}\left\{\frac{u}{T^\beta} \right\}^{1/(1-\beta)}\right),\  u\ge (c_0T)^\beta.
\end{eqnarray*}
For the last bound we observe that for $\beta\le  \frac12$ and $u \ge (c_0T)^\beta$, $
[{u^{\frac{\beta-\frac 12}{1-\beta}}}]
 /[{T^{\frac{\beta}{2(1-\beta)}}}]\le \bar cT^{\frac{\beta}{1-\beta}([\beta-\frac 12]-\frac12)}=\bar c T^{-\beta}$, with a constant  $\bar c:=\bar c(\beta)$. For $\beta > \frac 12$, we obtain similarly that $
 [{u^{\frac{\beta-\frac 12}{1-\beta}}}]/[{T^{\frac{\beta}{2(1-\beta)}}}]$ $\le T^{-\beta}  [\frac{u}{T^\beta}]^{\frac{\beta-\frac 12}{1-\beta}}$. The last contribution can be absorbed by the exponential so that we are back to the previous case.
For $u\in [(c_0^{-1}T)^\beta,(c_0T)^\beta] $, the statement follows from \eqref{REL_DENS} and the positivity of $p_{S^{\beta^+}}(1,.)$ on compact sets.

From the control  $p_{Z^\beta}(T,u)\le \theta(T,u) $ and the relation \eqref{OBS_DIS_INV}, 
 we indeed derive that for all $i\in \N^* $, $\P[Z_T^{\beta,h}=t_i]\le C_\beta h \theta(T,t_i) $ for some $C_\beta\ge 1$. 
\end{proof}

For the rest of the analysis we will need some controls on the p.d.f. of $X_t$ and their time derivatives in both the diffusive and strictly stable case. Such results can be found in Friedman \cite{frie:64} under \A{A${}_D$} or \cite{kolo:00} under \A{A${}_S$}. For the reader's convenience we also recall in Appendix \ref{ASYMP_SMALL_TIMES} some points about the parametrix expansion from which these controls are derived. Let us emphasize that, for our analysis, we need to specificy explicitly the dependence of the constants in time since we are led to integrate over arbitrarily large time intervals in \eqref{DENS_GLOB_MARKOV}.

\begin{LEMME}[Non degenerate heat kernel bounds]
\label{HKUSUEL}
Under Assumption \A{A${}_D $} (diffusive case), there exists $(c,C):=(c,C)($\A{A${}_D$}$,d)\ge 1 $ s.t. for all $(u,x,y)\in \R_{+}^*\times (\R^d)^2 $:
\begin{eqnarray}
\label{HK_DIFF}
p(u,x,y)&\le& C\exp(C u^{1/2})g_{c}(u,x-y) 
,\nonumber\\
|\partial_u p(u,x,y)|&\le& C\frac{\exp(C u^{1/2})}{u}g_{c}(u,x-y)
,\nonumber \\
g_{c}(u,x-y)&:=&\frac{1}{(2\pi c u)^{d/2}}\exp(-\frac{|x-y|^2}{2 c u}).
\end{eqnarray}
Under Assumption \A{A${}_S$} (strictly stable case), there exists $C:=C($\A{A${}_S$}$,d)\ge 1,\ \omega:=\frac1\alpha \wedge 1 $ s.t. for all $(u,x,y)\in \R_+^{*}\times (\R^d)^2$:
\begin{eqnarray}
\label{HK_BOUND_STABLE}
p(u,x,y)&\le& C \exp(Cu^{\omega}) p_S(u,x-y) 
,\nonumber \\
\partial_u p(u,x,y)&\le&  C\frac{\exp( C u^\omega)}{u}p_S(u,x-y),\nonumber \\
p_S(u,x-y)&=&\frac{c_\alpha}{u^{d/\alpha}}\left(1+\frac{|x-y|}{u^{1/\alpha}}\right)^{-(d+\alpha)},
\end{eqnarray}
where the constant $c_\alpha$ is chosen to have $\int_{\R^d}p_S(u,z)dz=1 $.

Observe that the above controls are exponentially explosive in time. 
\end{LEMME}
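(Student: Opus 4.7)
The plan is to derive both bounds from the parametrix expansion, whose main steps are recalled in Appendix \ref{ASYMP_SMALL_TIMES}. Denote by $\tilde p^y(u,x,\cdot)$ the \emph{frozen} density, i.e.\ the density of the SDE obtained by freezing the coefficients at the endpoint $y$: in the diffusive case this is the Gaussian with mean $x+ub(y)$ and covariance $u\sigma\sigma^*(y)$; in the stable case the stable density of scale $u^{1/\alpha}\sigma(y)$ translated by $ub(y)\mathbf{1}_{\alpha>1}$. Thanks to \A{UE} (resp.\ \A{ND}) these directly satisfy $\tilde p^y(u,x,y)\le C g_{c}(u,x-y)$ (resp.\ $C p_S(u,x-y)$) as well as the time-derivative bounds $|\partial_u \tilde p^y(u,x,y)|\le \frac{C}{u} g_{c}(u,x-y)$ (resp.\ $\frac{C}{u}p_S(u,x-y)$), which follow by direct computation from the explicit form of the frozen kernel.

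One then writes $p=\sum_{k\ge 0}\tilde p \otimes H^{\otimes k}$, where $H(u,x,y):=(L_x-\tilde L_x^{y})\tilde p^y(u,x,y)$ is the parametrix kernel and $\otimes$ denotes the usual space--time convolution. The smoothness assumption \A{S} forces a gain in the time singularity of $H$: more precisely $|H(u,x,y)|\le C u^{-1+\gamma}\bar g_c(u,x-y)$, with $\gamma=1/2$ in the diffusive case (from the Lipschitz control of $\sigma\sigma^*$) and $\gamma=\omega=\frac{1}{\alpha}\wedge 1$ in the strictly stable case, cf.\ \cite{kolo:00}. An induction using the Beta function gives
\[
|\tilde p\otimes H^{\otimes k}(u,x,y)|\le \frac{(Cu^\gamma)^k}{\Gamma(k\gamma+1)}\,\bar g_c(u,x-y),
\]
and the Mittag--Leffler-type summation $\sum_{k\ge 0}(Cu^\gamma)^k/\Gamma(k\gamma+1)\le C'\exp(C'u^\gamma)$ produces the prefactor $\exp(Cu^{1/2})$ (resp.\ $\exp(Cu^\omega)$) in \eqref{HK_DIFF} (resp.\ \eqref{HK_BOUND_STABLE}).

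The bounds on $\partial_u p$ follow by differentiating the parametrix series term by term. For the $k=0$ term the estimate is immediate from the first paragraph. For $k\ge 1$, either the time derivative falls on $\tilde p$ and inherits the extra factor $u^{-1}$, or it falls on $H^{\otimes k}$, in which case one uses the frozen forward equation $\partial_u \tilde p^y=\tilde L^y\tilde p^y$ to trade the derivative for an additional spatial operator whose action is already encoded in the parametrix kernel. The same Beta-function induction then applies verbatim and yields the overall factor $u^{-1}\exp(Cu^\gamma)$.

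The main obstacle here is not the bounds themselves, which are essentially contained in Friedman \cite{frie:64} for the diffusive case and in Kolokoltsov \cite{kolo:00} for the stable one, but the explicit tracking of the time dependence of the constants. Standard presentations lump everything into a single $C_T$ valid on a fixed interval $[0,T]$, whereas here we must integrate over arbitrarily large times in \eqref{DENS_GLOB_MARKOV} and therefore need to quantify how $C_u$ grows with $u$. The delicate point is hence to carry out the induction while separating, at each step, the $u$-independent universal constants from the genuinely $u$-dependent contribution $(Cu^\gamma)^k/\Gamma(k\gamma+1)$; the constant $c$ in the envelope $g_c$ (resp.\ $p_S$) must be fixed once and for all, large enough so that the successive convolutions remain dominated by the \emph{same} Gaussian (resp.\ stable) envelope, with only the Mittag--Leffler prefactor paying the cost of time growth.
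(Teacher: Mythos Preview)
Your proposal is correct and follows essentially the same route as the paper: the parametrix expansion \eqref{DEV_PARAM}, the iterated kernel bound $|\tilde p\otimes H^{(k)}|\le (c_1 u)^{k\gamma}/\Gamma(\cdot)\,\bar g_c$ with $\gamma=1/2$ (diffusive) or $\gamma=\omega$ (stable), and the Mittag--Leffler-type summation yielding the exponential prefactor. The paper itself does not spell out the $\partial_u p$ bound beyond referring to Friedman \cite{frie:64} and Kolokoltsov \cite{kolo:00}, so your sketch of differentiating the series term by term is in fact slightly more explicit than what appears there; just be aware that differentiating the time convolution also produces a boundary term at $s=u$ (or $s=0$), which is handled by the short-time asymptotics of $H$, but this is standard.
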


We now give a fundamental lemma for the error analysis. The quantities below derive from the time sensitivities or the previous error bounds from \cite{kona:mamm:00}, \cite{kona:mamm:02} 
used to control the terms ${\mathcal E}_1, {\mathcal E}_2 $ in \eqref{ERROR_1} respectively.
\begin{LEMME}[Fundamental Bounds]
\label{GROS_LEMME}
Let $T>0$ be given.  The following controls hold:
\begin{trivlist}
\item[-] \textbf{Diffusive case}: There exists $\bar c:=\bar c($\A{A${}_D$}) s.t. for $\gamma \in \{0, \frac 12 ,1 \} $ and for all $(x,y)\in (\R^d)^2,\ x\neq y $:
\begin{eqnarray}
\label{EQ_GROS_LEMME_DIFF}
\int_{0}^{+\infty} 
u^{-\gamma} 
p_{Z^\beta}(T,u) g_{c}(u,x-y)\exp(C u^{1/2})du\nonumber\\
+ \sum_{i\ge 1} t_i^{-\gamma} 
\P[Z_T^{\beta,h}=t_i] g_{c}(t_i,x-y)\exp(C t_i^{1/2})\nonumber\\
\le \bar c \left\{ \Big( \frac{\I_{d\ge 3}}{|x-y|^{2\gamma}}+\frac{\I_{d=2}}{|x-y|^{\I_{\gamma=1}}T^{\frac\beta 2 \I_{\gamma>0}}}+\frac{\I_{d=1}}{|x-y|^{\I_{\gamma>0}}T^{\frac \beta 2 \I_{\gamma=1}}} \Big)\hat p_\beta(T,x-y)\right. \notag\\
\left.+\frac{1}{T^{\beta \gamma}}\tilde p_\beta(T,x-y)  \right\}.
\end{eqnarray}

For the Markov Chain approximation, for all $\varepsilon \in (0,\frac 1 5) $,  there exists $c_\varepsilon:=c_\varepsilon($\A{A${}_{D,m}$}) s.t. denoting for all $(x,y)\in (\R^d)^2$,
$$q_m(t_i,x-y):=\frac{c_m}{t_i^{d/2}}\left(1+\frac{|x-y|}{t_i^{1/2}}\right)^{-m},$$ 
where $c_m$ is s.t. $\int_{\R^d}q_m(t_i,z) dz=1 $,
we have for all $x\neq y$:
\begin{eqnarray}
\label{EQ_GROS_LEMME_CDM}
\sum_{i\ge \lceil h^{-4/5-\varepsilon} \rceil } q_m(t_i,x-y)t_i^{-1/2}\exp(Ct_i^{1/2})\P[Z_T^{\beta,h}=t_i]\nonumber\\
\le c_\varepsilon \bigg\{\bigg\{\frac{\I_{d=1}+\I_{d\ge 3}}{|x-y|}+\frac{\I_{d=2}}{T^{\beta/2}}\bigg\}
\hat q_{m-[(d-1)+\I_{d=1}],\beta}(T,x-y) 
+\frac{1}{T^{\beta/2}}\tilde q_{m,\beta}(T,x-y) \bigg\}\nonumber\\
:=c_\varepsilon {\mathcal E}_{\beta,{\rm Space, LLT}}^M(T,x-y).
\end{eqnarray} 
\item[-] \textbf{Strictly Stable case}: There exists $\bar c:=\bar c($\A{A${}_S$}) s.t. for $\gamma \in \{0,1 \} $ and for all $(x,y)\in (\R^d)^2,\ x\neq y $:
\begin{eqnarray}
\label{EQ_GROS_LEMME_STABLE}
\int_{0}^{+\infty} 
u^{-\gamma} 
p_{Z^\beta}(T,u) p_S(u,x-y)\exp(C u^{\omega})du
\\
+\sum_{i\ge 1} t_i^{-\gamma} 
\P[Z_T^{\beta,h}=t_i] p_{S}(t_i,x-y)\exp(C t_i^{\omega})\nonumber 
\\
\nonumber \le \bar c
\Big\{ \Big( \frac{\I_{|x-y|\le T^{\beta/\alpha}}}{|x-y|^{\gamma \alpha}}+\frac{\I_{|x-y|>T^{\beta/\alpha}}}{T^{\beta \gamma }}\Big) \hat p_\beta(T,x-y)+\frac{1}{T^{\beta\gamma }}\tilde p_\beta(T,x-y) \Big\}.
\end{eqnarray}
\end{trivlist}
\end{LEMME}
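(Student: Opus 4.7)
The three estimates share a common two-region structure. I split the integration variable $u$ (resp. the summation index $t_i$) at $u \asymp T^\beta$ and use Lemma \ref{BOUNDS_P_Z_P_Z_H} to replace $p_{Z^\beta}(T,u)$ by $\theta(T,u)$ and $\P[Z_T^{\beta,h}=t_i]$ by $C_\beta h\,\theta(T,t_i)$. The latter makes the discrete sums Riemann-comparable to the continuous integrals up to the constant $C_\beta$, so that in each case only one integral needs to be controlled. In the short-time region $u \lesssim T^\beta$ I use the plateau $\theta(T,u) \le c/T^\beta$ and let the spatial kernel generate the diagonal singularity; in the long-time region $u \gtrsim T^\beta$ I keep the full stretched-exponential decay of $\theta$ and combine it with the spatial kernel by a saddle-point argument to produce the concentration encoded in $\tilde p_\beta$.

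\textbf{Short- and long-time estimates, diffusive and stable cases.} In the short-time region the integral reduces, up to constants, to $T^{-\beta}\int_0^{T^\beta} u^{-\gamma} K(u,x-y)\,du$ with $K = g_c$ or $K = p_S$. For the Gaussian kernel, the change of variables $v = |x-y|^2/(2cu)$ turns this into a $\Gamma$-type integral producing $|x-y|^{-(d-2+2\gamma)}$ when convergence at $v\to 0$ holds, i.e. for $d\ge 3$; for $d\le 2$ the truncation at $u=T^\beta$ is felt and generates the mixed $T^{-\beta/2}$ factors guarded by the $\I_{d=1}$ and $\I_{d=2}$ indicators in \eqref{EQ_GROS_LEMME_DIFF}. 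For the stable kernel, the same split is performed according to $|x-y| \lessgtr T^{\beta/\alpha}$, using $p_S(u,\cdot)\asymp u^{-d/\alpha}$ in the diagonal regime and $p_S(u,\cdot)\asymp u|x-y|^{-(d+\alpha)}$ in the off-diagonal one. In the long-time region I first absorb the growth $\exp(Cu^\kappa)$, with $\kappa=1/2$ in the diffusive case and $\kappa=\omega$ in the stable one, by Young's inequality, $Cu^\kappa \le \tfrac12 c_\beta^{-1}(u/T^\beta)^{1/(1-\beta)} + cT^{\beta\kappa/(1-\kappa(1-\beta))}$, which is legitimate since $\kappa(1-\beta)<1$, and produces the constant in front of $\tilde p_\beta$. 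Optimizing the remaining exponent $|x-y|^2/(2cu)+c(u/T^\beta)^{1/(1-\beta)}$ over $u \ge T^\beta$ yields the critical point $u^\star \asymp T^{\beta/(2-\beta)}|x-y|^{2(1-\beta)/(2-\beta)}$ and the saddle-point value $\{|x-y|^2/T^\beta\}^{1/(2-\beta)}$, which is precisely the exponent in $\tilde p_\beta$, with the polynomial prefactor yielding the $T^{-\beta\gamma}$ factor.

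\textbf{Markov chain polynomial bound: the main obstacle.} The delicate item is \eqref{EQ_GROS_LEMME_CDM}, where the Gaussian is replaced by the polynomial $q_m$ and a saddle-point is no longer available. The short-time portion of the sum is handled by further splitting according to whether $|x-y|/t_i^{1/2}$ is large or small: for $t_i \lesssim |x-y|^2$ one has $q_m(t_i,x-y) \asymp t_i^{(m-d)/2}/|x-y|^m$, and several powers of $|x-y|^{-1}$ must be spent to ensure integrability of the resulting $t_i^{(m-d)/2-1/2-1}$ factor against $h\sum_{t_i \le |x-y|^2}$; this bookkeeping is exactly what produces the index drop from $m$ down to $m-(d-1)-\I_{d=1}$ in $\hat q_{m-(d-1)+\I_{d=1},\beta}$, with the $d$-dependent indicators arising from the same boundary cases $d \le 2$ as in the Gaussian analysis. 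In the long-time region, the change $v=t_i/T^\beta$ balances $(1+|x-y|/t_i^{1/2})^{-m}$ against the stretched-exponential factor of $\theta$; since one unit of spatial decay in $|x-y|/T^{\beta/2}$ costs $(2-\beta)$ orders of polynomial decay, exactly $\lfloor m/(2-\beta)\rfloor$ orders survive, which matches the exponent in $\tilde q_{m,\beta}$. The cutoff $i \ge \lceil h^{-4/5-\varepsilon}\rceil$ plays no role in the estimate itself: it is only imposed upstream because the LLT of \cite{kona:mamm:00} giving $p^h(t_i,x,y) \lesssim q_m(t_i,x-y)$ requires $t_i \ge h^{1/5-\varepsilon}$, while the contribution of smaller $i$ is carried by the separate term ${\mathcal E}_{\beta,\text{Space, NoLLT}}^{M,\varepsilon}$ in Lemma \ref{LEMME_S_SENS_D}.
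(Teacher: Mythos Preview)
Your proposal is correct and follows essentially the same route as the paper: split at $u=T^\beta$, use Lemma \ref{BOUNDS_P_Z_P_Z_H} to reduce the discrete sums to Riemann approximations of the integrals, absorb $\exp(Cu^{\kappa})$ via Young's inequality in the long-time region, and locate the balance point $u^\star\asymp(|x-y|^2T^{\beta/(1-\beta)})^{(1-\beta)/(2-\beta)}$ to extract the $\{|x-y|^2/T^\beta\}^{1/(2-\beta)}$ exponent. The only cosmetic differences are that the paper handles the diffusive short-time integral by recognizing the Brownian hitting-time density $f_a(u)=2a(2\pi u^3)^{-1/2}\exp(-a^2/2u)$ rather than your $\Gamma$-integral substitution, and that in the long-time region the paper performs an explicit three-interval split around $u^\star$ instead of invoking a saddle-point; both lead to the same bounds.
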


As a direct corollary of equation \eqref{DENS_GLOB_MARKOV} and Lemmas 
\ref{HKUSUEL} and \ref{GROS_LEMME} taking $\gamma=0 $ we obtain some controls
 on the density of $X_{Z_t^{\beta}}$. Precisely, we have:
\begin{COROL}[Some density estimates for the spatial motion along the inverse subordinator]
\label{LEMME_QUI_FAIT_A_LA_KOCHUBEI}
There exists $c:=c(\A{A},\beta,\alpha)\ge 1$ s.t. for any fixed $T>0$ the following estimates hold:
\begin{eqnarray*}
p_{X_{Z_T^\beta}}(x,y)\le c (\hat p_\beta+\tilde p_\beta)(T,x-y),
\end{eqnarray*}
in both the diffusive and strictly stable case with the associated definitions of $\hat p_\beta,\tilde p_\beta $ in Theorems \ref{THM_DIFF} and \ref{THM_STABLE}.
\end{COROL}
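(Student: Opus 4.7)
\textbf{Proof plan for Corollary \ref{LEMME_QUI_FAIT_A_LA_KOCHUBEI}.}
The strategy is a direct assembly of the convolution representation \eqref{DENS_GLOB_MARKOV} with the pointwise heat kernel bounds of Lemma \ref{HKUSUEL} and the integration estimates of Lemma \ref{GROS_LEMME} specialized to $\gamma=0$. Starting from
\begin{equation*}
p_{X_{Z_T^\beta}}(x,y)=\int_0^{+\infty} p(u,x,y)\, p_{Z^\beta}(T,u)\,du,
\end{equation*}
one splits the analysis according to whether the spatial generator is diffusive or strictly stable and simply plugs in the available upper bounds.

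In the diffusive case, Lemma \ref{HKUSUEL} gives $p(u,x,y)\le C\exp(Cu^{1/2}) g_c(u,x-y)$, so that
\begin{equation*}
p_{X_{Z_T^\beta}}(x,y)\le C\int_0^{+\infty} p_{Z^\beta}(T,u)\, g_c(u,x-y)\exp(Cu^{1/2})\,du.
\end{equation*}
Applying \eqref{EQ_GROS_LEMME_DIFF} with $\gamma=0$, the prefactor inside the braces collapses to a constant (since $|x-y|^{2\gamma}=|x-y|^{\I_{\gamma=1}}=|x-y|^{\I_{\gamma>0}}=1$ and $T^{\beta\gamma}=1$), yielding precisely $c\,(\hat p_\beta+\tilde p_\beta)(T,x-y)$ with $\hat p_\beta,\tilde p_\beta$ as in Theorem \ref{THM_DIFF}.

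In the strictly stable case one proceeds identically. Lemma \ref{HKUSUEL} provides $p(u,x,y)\le C\exp(Cu^\omega) p_S(u,x-y)$ with $\omega=1/\alpha\wedge 1$, and inserting this in the convolution formula one is reduced to bounding
\begin{equation*}
\int_0^{+\infty} p_{Z^\beta}(T,u)\, p_S(u,x-y)\exp(Cu^\omega)\,du.
\end{equation*}
The estimate \eqref{EQ_GROS_LEMME_STABLE} of Lemma \ref{GROS_LEMME} at $\gamma=0$ bounds this integral by $\bar c\,(\hat p_\beta+\tilde p_\beta)(T,x-y)$ with the versions of $\hat p_\beta,\tilde p_\beta$ given in Theorem \ref{THM_STABLE}.

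There is no real obstacle beyond verifying that the $\gamma=0$ specialization of Lemma \ref{GROS_LEMME} indeed produces no extra $|x-y|^{-1}$ or $T^{-\beta/2}$ factors, which it doesn't: all exponents associated with $\gamma$ vanish. The time-exponential weights $\exp(Cu^{1/2})$ and $\exp(Cu^\omega)$ from the heat kernel bounds are the only subtle point, but they are precisely what generate the $\exp(cT^{\beta/2})$ and $\exp(cT^{\beta\omega})$ prefactors in $\hat p_\beta$, respectively $\exp(cT^{\beta/(1+\beta)})$ and $\exp(cT^{\beta\omega/(1-\omega(1-\beta))})$ in $\tilde p_\beta$, as the fast decay of $p_{Z^\beta}(T,u)$ in the subexponential regime $u\gg T^\beta$ captured in Lemma \ref{BOUNDS_P_Z_P_Z_H} is able to absorb them. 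Thus the corollary follows at once from the two preceding lemmas.
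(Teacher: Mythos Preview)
Your proposal is correct and follows exactly the approach indicated in the paper: the corollary is obtained directly from the convolution representation \eqref{DENS_GLOB_MARKOV}, the heat kernel bounds of Lemma \ref{HKUSUEL}, and Lemma \ref{GROS_LEMME} specialized to $\gamma=0$. Your verification that all $\gamma$-dependent prefactors collapse to $1$ at $\gamma=0$ is precisely the point, and matches the paper's one-line justification.
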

\begin{REM}\label{RQ_DIAG_FRAC}
In the above bounds, from the definitions of $\hat p_\beta,\tilde p_\beta $, we have in some sense the \textit{usual scaling} in time for the Gaussian and stable heat-kernels at time $T^\beta $. We indeed observe that the dichotomy between the diagonal and off-diagonal regime is performed depending on $|x-y|\le K(T^\beta)^{1/\alpha} $ or $|x-y|> K(T^\beta)^{1/\alpha} $ for some positive constant $K$. In the diagonal regime we also have, as indicated previously, an additional spatial singularity
whereas 
in the off-diagonal one, the concentration is modified in the diffusive case due to the integration of the density $p_{Z^\beta}(T,u)$ for large $u$ in \eqref{DENS_GLOB_MARKOV}. 

In the Gaussian case, those bounds are coherent with those of \cite{koch:eide:04} and our approach can be viewed as an alternative probabilistic
strategy to derive estimates for the fundamental solution of fractional in time diffusive PDEs. 
For the strictly stable case, to the best of our knowledge these results seem to be new. 
Some lower bounds are derived in Appendix \ref{APP_HK}.
\end{REM}

\begin{proof}[Proof of Lemma \ref{GROS_LEMME}]
\hspace*{1cm}
\begin{trivlist}
\item[-] \textbf{Diffusive case}. 
We focus on the first integral in the l.h.s. of \eqref{EQ_GROS_LEMME_DIFF} since the other term can be handled similarly (Riemann sum approximation).  Exploiting the bounds from Lemma \ref{BOUNDS_P_Z_P_Z_H},  we
split the time integration domain at the characteristic time scale $T^\beta $ and write:
\begin{eqnarray}
\label{DEC_1}
Q_{\gamma,\beta}(T,x-y):=\int_{0}^{+\infty}
u^{-\gamma} 
p_{Z^\beta}(T,u) g_{c}(u,x-y)\exp(C u^{1/2})du
\\
        \le \frac{c_\beta 
        \exp(CT^{\beta/2})}{T^\beta}\exp\left(-\frac{c^{-1}}4\frac{|x-y|^2}{T^\beta}\right)\int_{0}^{T^\beta}\frac{du}{u^{\gamma+d/2}}\exp\left(-\frac{c^{-1}}4\frac{|x-y|^2}{u}\right)\notag\\
        +\frac{c_\beta}{T^{\beta}}\int_{T^\beta}^{+\infty}\frac{du}{u^{\gamma+d/2}} 
        \exp(Cu^{1/2})\exp\left(-c^{-1}\frac{|x-y|^2}{2u}\right)\exp\left(-c_\beta^{-1}\left\{ \frac{u}{T^{\beta}}\right\}^{1/(1-\beta)}  \right)\notag\\
        :=Q_{\gamma,\beta,s}(T,x-y)+Q_{\gamma,\beta,l}(T,x-y),\notag
\end{eqnarray}
where the subscripts $s,l$ respectively denote the contributions in \textit{short} and \textit{long} time.
For the contribution $Q_{\gamma,\beta,s}(T,x-y)$, we have to analyze the integral $I(T^\beta,\gamma,|x-y|,d):=\int_{0}^{T^\beta}\frac{du}{u^{\gamma+d/2}}\exp(-\frac{c^{-1}}4\frac{|x-y|^2}{u})$. Observe that except if $d=1,\gamma=0$ there is a singularity in small time in the integral which needs to be compensated by the spatial component $|x-y|\neq 0 $ in the exponential. This yields the spatial (diagonal) singularity.

To equilibrate the time singularity we will thoroughly rely on the fact that
the density of  the hitting time of level $a>0$ for the scalar Brownian motion writes (see e.g. \cite{revu:yor:99}):
\begin{equation}
\label{HIT_TIME_BM}
\forall u>0,\ f_a(u):= \frac{2a}{(2\pi u^{3})^{1/2}}\exp\left(-\frac{a^2}{2u} \right).
\end{equation}

\begin{trivlist}
\item[-]If $\gamma+ d/2\ge 3/2$ we write for all $|x-y|\neq 0 $:
\begin{eqnarray*}
I(T^\beta,\gamma,|x-y|,d)\\
= \int_{0}^{T^\beta}\frac{du |x-y| }{u^{3/2}} \frac{|x-y|^{d+2\gamma-3}}{u^{(d+2\gamma-3)/2}}\exp\left(-\frac{c^{-1}}4\frac{|x-y|^2}{u}\right)\frac{1}{|x-y|^{d+2\gamma-2}}\\
\le\frac{\tilde c}{|x-y|^{d+2(\gamma-1)}}\int_0^{T^\beta }\frac{du |x-y| }{(2\pi u^3)^{1/2} } \exp\left(-\tilde c^{-1}\frac{|x-y|^2}{2u}\right)\le \frac{\tilde c}{|x-y|^{d+2(\gamma-1)}}.
\end{eqnarray*}
\item[-] If $\gamma+d/2<3/2 $ which actually happens for $d=1, \gamma\in \{0,\frac 12\} $ or $d=2$ and $\gamma=0 $ we write:
\begin{eqnarray*}
I(T^\beta,0,|x-y|,1)&\le & c T^{\beta/2},\\
I(T^\beta,0,|x-y|,2)&=&I(T^\beta,1/2,|x-y|,1)\le \frac{T^{\beta/2}}{|x-y|} \int_{0}^{T^\beta}\frac{du |x-y| }{u^{3/2}} \exp\left(-\frac{c^{-1}}{4}\frac{|x-y|^2}{u}\right)\\
&\le&\frac{\tilde c T^{\beta/2}}{|x-y|}.
\end{eqnarray*}
\end{trivlist}

The previous results can be summarized as follows. There exists $c:=c($\A{A${}_D $},$\beta)$ s.t. for all $(T,x,y)\in \R_+^*\times (\R^d)^2$, $|x-y|\neq 0 $: 
\begin{equation}
\label{CTR_DIFF_S}
\begin{split}
Q_{\gamma,\beta, s}(T,x-y)\le c\exp(cT^{\beta/2})\exp\left(-c^{-1}\frac{|x-y|^2}{2T^\beta}\right)\\
\times \Big\{\frac{\I_{d=1,\gamma=0}}{T^{\beta/2}}+\frac{\I_{d=2,\gamma=0}+\I_{d=1,\gamma=1/2}}{T^{\beta/2}|x-y|} +\frac{\I_{d+2\gamma \ge 3}}{T^\beta |x-y|^{d+2(\gamma-1)}}\Big\}.
\end{split}
\end{equation}
The restriction $|x-y|\neq 0 $ is not necessary if $d=1,\ \gamma=0$.

On the other hand, from Young's inequality, there exists $\bar c_\beta>0 $ s.t. for all $u,\varepsilon>0 $:
$$ u^{1/2}=\left( \varepsilon\frac{u}{T^\beta}\right)^{1/2}(\varepsilon^{-1}T^\beta)^{1/2}\le \bar c_\beta \left\{ \left(\varepsilon \frac{u}{T^\beta}\right)^{1/(1-\beta)} 
+\varepsilon^{-1/(1+\beta)} T^{\beta/(1+\beta)}\right\}.$$
We thus get from \eqref{DEC_1} that, for $\varepsilon $ sufficiently small, there exists $\tilde c_\beta$ s.t.:
\begin{eqnarray*}
Q_{\gamma,\beta, l}(T,x-y)\le \frac{cc_\beta}{T^{\beta(1+\gamma+d/2)}}\exp(\tilde c_\beta T^{\beta/(1+\beta)})\\\
\times \int_{T^\beta}^{+\infty}du\exp\left(-c^{-1}\frac{|x-y|^2}{2u}\right)\exp\left(-\frac{c_\beta^{-1}}2\left\{ \frac{u}{T^{\beta}}\right\}^{1/(1-\beta)}\right).
\end{eqnarray*}
Now two cases occur according to the dichotomy indicated in Remark \ref{RQ_DIAG_FRAC}. If $|x-y|\le T^{\beta/2} $ then setting $\tilde u:=u/T^\beta $ we get $Q_{\gamma,\beta, l}(T,x-y)\le \frac{\bar c_\beta}{T^{\beta(\gamma+ d/2)}}\exp(\bar c_\beta T^{\beta/(1+\beta)})$. If now $|x-y|> T^{\beta/2} $, we see that the two exponential contributions in the integral are \textit{equivalent} if  $ \kappa_1 \{ |x-y|^2T^{\beta/(1-\beta)}\}^{\frac{1-\beta}{2-\beta}}\le  u  \le \kappa_2 \{ |x-y|^2T^{\beta/(1-\beta)}\}^{\frac{1-\beta}{2-\beta}},\ 1<\kappa_1<\kappa_2<+\infty$, for which 
\begin{eqnarray*}
\exp\bigg(-\frac{c_\beta^{-1}}2\left\{ \frac{u}{T^{\beta}}\right\}^{1/(1-\beta)}\bigg)\\
\le \exp\bigg(-\frac{c_\beta^{-1}\kappa_1^{1/(1-\beta)} }{4}\bigg[\frac{|x-y|^2}{T^\beta}\bigg]^{1/(2-\beta)}\bigg)\exp\bigg(-\frac{c_\beta^{-1}}4\left\{ \frac{u}{T^{\beta}}\right\}^{1/(1-\beta)}\bigg) .
\end{eqnarray*}
On the other hand, for $ u\in [T^\beta, \kappa_1\{ |x-y|^2T^{\beta/(1-\beta)}\}^{\frac{1-\beta}{2-\beta}}]$ we have:
$$\exp\bigg(-c^{-1}\frac{|x-y|^2}{2u}\bigg)\le \exp\bigg(-(2c\kappa_1)^{-1}\bigg[\frac{|x-y|^2}{T^\beta}\bigg]^{1/(2-\beta)}\bigg) .$$
For $u\in [\kappa_2\{ |x-y|^2T^{\beta/(1-\beta)}\}^{\frac{1-\beta}{2-\beta}},+\infty)$, it  suffices to bound 
$$\exp\bigg(-\frac{c_\beta^{-1}}4\left\{ \frac{u}{T^{\beta}}\right\}^{1/(1-\beta)}\bigg)\le  \exp\bigg(-\frac{c_\beta^{-1}}4\kappa_2^{1/(1-\beta)}\left( \frac{|x-y|^2}{T^\beta}\right)^{1/(2-\beta)}\bigg).$$
Thus, setting as in the diagonal case $\tilde u=u/T^\beta $, we derive that there exists $\bar c_\beta:=\bar c_\beta($\A{A${}_D$},$\kappa_1,\kappa_2)\ge 1$, s.t. for all $(T,x,y)\in \R_{+}^*\times (\R^d)^2 $:
\begin{equation}
\label{CTR_DIFF_L}
Q_{\gamma,\beta,l}(T,x-y)\le \frac{\bar c_\beta}{T^{\beta( \gamma+d/2)}}\exp(\bar c_\beta T^{\beta/(1+\beta)})\exp\left(-\bar c_\beta^{-1}\bigg[\frac{|x-y|^2}{T^\beta}\bigg]^{1/(2-\beta)}\right).
\end{equation}
The statement \eqref{EQ_GROS_LEMME_DIFF} follows from \eqref{CTR_DIFF_L} and \eqref{CTR_DIFF_S}.
\item[-] \textbf{Markov Chain.}

Setting $i_C:=\lceil h^{-4/5-\varepsilon}\rceil,\ \ui:=\lceil  T^\beta/h\rceil $, let us write from Lemma \ref{BOUNDS_P_Z_P_Z_H}:
\begin{eqnarray}
\label{CTR_E122}
Q_{\frac12,\beta}(T,x-y):=
\sum_{i\ge i_C}^{}\P[Z_T^{\beta,h}=t_i] \exp(Ct_i^{1/2})t_i^{-1/2}q_m(t_i,x-y)   \nonumber\\
\le \frac{c}{T^\beta}\exp(CT^{\beta/2})\int_{t_{i_C}}^{t_{\ui}} u^{-(d+1)/2}\left(1+ \frac{|x-y|}{u^{1/2}}\right)^{-m}du \nonumber \\
+ \frac{c}{T^{\beta(1+(d+1)/2)}}\exp(cT^{\beta/(1+\beta)})\int_{t_{\ui}}^{+\infty} \left(1+ \frac{|x-y|}{u^{1/2}}\right)^{-m} \exp(-c_\beta^{-1}\left\{\frac{u}{T^\beta} \right\}^{1/(1-\beta)})du\nonumber\\
:=Q_{\frac12,\beta,s}(T,x-y)+Q_{\frac12,\beta,l}(T,x-y),
\end{eqnarray}
using as above Young's inequality to get the upper bound and replacing, up to modifications of the constants, the Riemann sums by integrals.

We now derive setting $\tilde t=u/|x-y|^2 $ and up to a modification of $c$:
\begin{eqnarray}
Q_{\frac12,\beta,s}(T,x-y) \notag\\
\le \frac{c}{T^\beta}\exp(cT^{\beta/2})\frac{t_{\ui}^{\frac 12\I_{d=1}}}{|x-y|^{d-1+\I_{d=1}}}\int_{t_{i_C}/|x-y|^2}^{t_{\ui}/|x-y|^2} \tilde t^{-(d+1+\I_{d=1})/2}\left(1+ \frac{1}{\tilde t^{1/2}}\right)^{-m
}d\tilde t \nonumber \\
\le \frac{c}{T^{\beta}}\exp(cT^{\beta/2})\frac{t_{\ui}^{\frac 12\I_{d=1}}}{|x-y|^{d-1+\I_{d=1}}}\int_{t_{i_C}/|x-y|^2}^{t_{\ui}/|x-y|^2} \tilde t^{-3/2}\left(1+ \frac{1}{\tilde t^{1/2}}\right)^{-m+ (d-2+\I_{d=1}) 
}d\tilde t \nonumber\\
\le  \frac{c}{T^\beta}\exp(cT^{\beta/2}) \left[  \frac{T^{\beta/2}\I_{d=1}}{|x-y|} +\frac{\I_{d\ge 2}}{|x-y|^{d-1}}\right]\times \left(1+\frac{|x-y|}{T^{\beta/2}} \right)^{-m+(d-1)+\I_{d=1}}.\nonumber\\
\label{CTR_Q12_S}
\end{eqnarray}

On the other hand if $|x-y|\le T^{\beta/2} $ (diagonal regime) we readily get $Q_{\frac12,\beta,l}(T,x-y)\le \frac{c\exp(cT^{\beta/(1+\beta)})}{T^{\beta(d+1)/2}} $. If now $|x-y|>T^{\beta/2} $, splitting as above for $u\in [t_{\ui}, \kappa_1\{ |x-y|^2T^{\beta/(1-\beta)}\}^{\frac{1-\beta}{2-\beta}}]$ and $u\ge \kappa_1\{ |x-y|^2T^{\beta/(1-\beta)}\}^{\frac{1-\beta}{2-\beta}} $ yields :
\begin{eqnarray}
\label{CTR_Q12_L}
Q_{\frac12,\beta,l}(T,x-y)\le \frac{c}{T^{\beta(d+1)/2}} \exp(cT^{\beta/(1+\beta)})\left(1+\left[\frac{|x-y|}{T^{\beta/2}}\right]^{1/(2-\beta)} \right)^{-m}.\nonumber\\
\end{eqnarray}
Equation \eqref{EQ_GROS_LEMME_CDM} follows putting \eqref{CTR_Q12_L} and \eqref{CTR_Q12_S} in \eqref{CTR_E122}.
 \item[-] \textbf{Strictly Stable Case.}
We again focus on the first term in the l.h.s. of \eqref{EQ_GROS_LEMME_STABLE}. The discrete sum can be handled similarly.
Analogously to  \eqref{DEC_1}, replacing $g_{c} $ by $p_S $ and the exponent $1/2$ in the exponential by $\omega $, we write from Lemmas \ref{BOUNDS_P_Z_P_Z_H} and \ref{HKUSUEL}:
\begin{eqnarray*}
Q_{\gamma,\beta,s}(T,x-y)&\le &\frac{c_\beta c_\alpha
\exp(CT^{\beta\omega})}{T^\beta}\int_{0}^{T^\beta}\frac{du}{u^{\gamma+d/\alpha}}\frac{1}{(1+\frac{|x-y|}{u^{1/\alpha}})^{d+\alpha}}.
\end{eqnarray*}
Two cases are to be distinguished.
\begin{trivlist}
\item[-] If $d=1,\gamma=0, \alpha>1$ then the above diagonal singularity is integrable and one gets:
\begin{eqnarray}
Q_{0,\beta,s}(T,x-y)&\le &\frac{c_\beta c_\alpha \exp(C T^{\beta\omega})}{T^\beta}\frac{1}{(1+\frac{|x-y|}{T^{\beta/\alpha}})^{1+\alpha}}\int_{0}^{T^\beta}\frac{du}{u^{1/\alpha}}\notag\\
&\le &\frac{c c_\alpha\exp(cT^{\beta\omega})}{T^{\beta/\alpha}}\frac{1}{(1+\frac{|x-y|}{T^{\beta/\alpha}})^{1+\alpha}}.\label{DIAG_S_PART}
\end{eqnarray}
In particular, this estimate holds even for $x=y$. There is no spatial singularity.
\item[-] For all the other cases, i.e.  $d=1,\gamma =1, \alpha>1 $ or $d=1,\alpha\le 1$, $d\ge 2 $ for $\gamma\in \{0,1\} $, 
we need to consider $x\neq y $ in order to equilibrate the time singularity. In small time we still distinguish the diagonal and off-diagonal regimes. Precisely,
\begin{trivlist}
\item[-] If $|x-y|\le T^{\beta/\alpha} $, then:
\begin{eqnarray}
Q_{\gamma,\beta,s}(T,x-y)&\le &\frac{c_\beta c_\alpha \exp(CT^{\beta\omega})}{T^\beta}\Big\{\frac{1}{|x-y|^{d+\alpha}}\int_0^{|x-y|^\alpha} du u^{1-\gamma}+\int_{|x-y|^\alpha}^{T^{\beta}} \frac{du}{u^{\gamma+d/\alpha}}\Big\}\nonumber\\
&\le &c_\beta c\exp(cT^{\beta\omega})\Big\{\frac{1}{T^\beta |x-y|^{d-\alpha(1-\gamma)}}+\frac{1}{T^{\beta (\gamma+ d/\alpha)}} \Big\}
.\label{DIAG_S_GEN_DIAG}
\end{eqnarray}

\item[-] If $|x-y|> T^{\beta/\alpha} $
\begin{eqnarray}
Q_{\gamma,\beta,s}(T,x-y)&\le &\frac{c_\beta c_\alpha \exp(CT^{\beta\omega})}{T^\beta}\frac{1}{|x-y|^{d+\alpha}}\int_0^{T^\beta} du u^{1-\gamma}\nonumber\\
&\le &c_\beta c\exp(cT^{\beta\omega})\frac{T^{\beta(1-\gamma)}}{|x-y|^{d+\alpha}}.\label{DIAG_S_GEN}
\end{eqnarray}
\end{trivlist}
\end{trivlist}
Let us mention that, in the diagonal regime, we get as in the diffusive case an additional spatial singularity.

Let us now deal with $Q_{\gamma,\beta,l}(T,x-y) $. 
We first write from Young's inequality that there exists $\bar c_\beta$ s.t. for all $u,\varepsilon>0 $:
$$u^\omega =\left( \varepsilon \frac{u}{T^\beta} \right)^\omega (\varepsilon^{-1}T^\beta)^\omega\le \bar c_\beta\left\{ \left( \varepsilon \frac{u}{T^\beta}\right)^{1/(1-\beta)}+(\varepsilon^{-1}T^\beta)^{\omega/(1-\omega(1-\beta))} \right\}.$$
Thus, taking $\varepsilon $ small enough yields: 
\begin{eqnarray}
Q_{\gamma,\beta,l}(T,x-y)&\le & c\frac{\exp(cT^{\beta\omega/(1-\omega(1-\beta))})}{T^{\beta} }
\int_{T^\beta}^{+\infty} du (\frac{u}{|x-y|^{d+\alpha}}\wedge u^{-d\beta/\alpha})u^{-\gamma}\nonumber\\
&&\times  \exp\left(-\frac{c_\beta^{-1}}{2}\left\{\frac{u}{T^\beta}\right\}^{1/(1-\beta)}\right)
\nonumber\\
&\le&c\exp(cT^{\beta\omega/(1-\omega(1-\beta))})\left(\frac{T^\beta }{ |x-y|^{d+\alpha}}\wedge T^{-\beta d/\alpha}\right)T^{-\beta \gamma}. 
\label{HD_STABLE}
\end{eqnarray}
The result follows from \eqref{DIAG_S_PART}, \eqref{DIAG_S_GEN_DIAG}, \eqref{DIAG_S_GEN}, 
and \eqref{HD_STABLE}.
\end{trivlist}

\end{proof}

\subsection{Time Sensitivity: Proof of Lemma \ref{LEMME_T_SENS}}
The lemma, which gives the control on the time sensitivity term ${\mathcal E}_1(T,x,y)$ in the decomposition error \eqref{ERROR_1}, is proved using the  bounds  of Lemma \ref{GROS_LEMME}.
We simply write from Lemma \ref{HKUSUEL}, and Lemma \ref{BOUNDS_P_Z_P_Z_H} for the first time step, that in the diffusive case, there exists $\bar c:=\bar c($\A{A${}_D$}) s.t. :
\begin{eqnarray*}
|{\mathcal E}_1(T,x,y)|\\
\le \bar c \left\{\frac{h\exp(Ch^{1/2})}{T^\beta |x-y|^d}\exp\left(-c^{-1}\frac{|x-y|^2}{h} \right) \right.\\
\left. +h \int_h^{+\infty} p_{Z^\beta}(T,u)u^{-1}\exp(Cu^{1/2})g_{c}(u,x-y)du\right\},
\end{eqnarray*}
whereas in the strictly stable case, there exists $\bar c:=\bar c($\A{A${}_S$}) s.t. :
\begin{eqnarray*}
|{\mathcal E}_1(T,x,y)|\le \bar c \left\{  \frac{\exp(Ch^\omega) h^2}{T^\beta|x-y|^{d+\alpha}}+h\int_h^{+\infty} p_{Z^\beta}(T,u)u^{-1}\exp(Cu^\omega)p_{S}(u,x-y)du\right\}.
\end{eqnarray*}
Recalling that in the strictly stable case we assumed $h^{1/\beta}\le T $ and $h^{1/\alpha}\le |x-y| $, the result then follows from the above controls and equations \eqref{EQ_GROS_LEMME_DIFF},  \eqref{EQ_GROS_LEMME_STABLE} in Lemma \ref{GROS_LEMME}.

\subsection{Sensitivity in Space: Proof of Lemmas \ref{LEMME_S_SENS_D} and \ref{LEMME_S_SENS_S}}

\subsubsection{The case of a diffusive spatial motion}  
The key control for the analysis of the term ${\mathcal E}_2(T,x,y) $ in \eqref{ERROR_1} is provided by the following Lemma. 
\begin{LEMME}[Control for the Spatial Error]
\label{GROSSE_BORNE_SPATIALE_DIFF}
For our current approximation schemes in \eqref{SCHEME} we have:
\begin{trivlist}
\item[-] Under \A{A${}_{D,{\rm Eul}}$}, i.e. for the Euler scheme,  there exists $(C,c):=(C,c)($\A{A$_{D}$}$ ,d)\ge 1$ s.t. for a given $h>0$ and for all $i\in \N^*$:
\begin{equation}
\label{DIFF_EUL_SCHEME} 
\begin{split}
|(p-p_{{\rm Eul}}^h)(t_i,x,y)|&\le  C  h\{ \frac 1{t_i^{1/2}}\vee \exp(Ct_i^{1/2})\}  g_{c}(t_i,x-y), \\
g_{c}(t_i,z)&=\frac{1}{(2\pi c t_i)^{d/2}}\exp\left( -\frac{|z|^2}{2 c t_i}\right),\ z\in \R^d,
\end{split}
\end{equation}
standing for the usual $d $-dimensional Gaussian density with variance  $c t_i$.
\item[-] Under \A{A${}_{D,m}$}, i.e. general Markov Chain approximation, 
 there exists $C:=C($\A{A${}_{D,m}$}$,d)\ge 1$ s.t. if $ t_i\ge h^{\delta}$, $\delta<1/5 $:
\begin{equation}
\label{LLT_DIFF_MARG}
\begin{split}
|(p-p^h)(t_i,x,y)|&\le  C  h^{1/2}\{ \frac 1{t_i^{1/2}}\vee \exp(Ct_i^{1/2})\}  q_m(t_i,x-y), \\
0\le q_m(t_i,x-y)&:= c_m t_i^{-d/2}\left( 1+\frac{|x-y|}{t_i^{1/2}}\right)^{-m}.
\end{split}
\end{equation}
\end{trivlist}
\end{LEMME}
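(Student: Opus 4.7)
The plan is to derive both estimates through a parametrix-type representation for the transition densities $p$ and $p^h$, and then to write $p-p^h$ as a telescoping convolution series whose general term can be controlled by an appropriate ``heat-kernel'' type density. This is essentially the strategy of Konakov--Mammen \cite{kona:mamm:00}, \cite{kona:mamm:02}, \cite{kona:mamm:09}, but here the key extra task is to keep explicit track of the \emph{time dependence} of every constant, since $t_i$ will later be integrated against $\P[Z_T^{\beta,h}=t_i]$ over an unbounded range.

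First I would write $p(t_i,x,y)=\sum_{r\ge 0}\tilde p\otimes H^{(r)}(t_i,x,y)$, where $\tilde p$ is the ``frozen'' Gaussian kernel obtained by freezing the diffusion coefficient at the terminal point, $H$ is the kernel built from the difference between the generator $L_1$ and its frozen version, and $\otimes$ denotes time-space convolution; the Markov-chain analogue $p^h(t_i,x,y)=\sum_{r\ge 0}\tilde p^{h}\otimes_d H_h^{(r)}(t_i,x,y)$ is written with a discrete-time convolution $\otimes_d$ and a discrete parametrix kernel $H_h$. Subtracting these two series and pairing matching orders, one obtains a representation $p-p^h=\sum_{r\ge 0}[(\tilde p-\tilde p^h)\otimes H^{(r)}+\tilde p^h\otimes_d(H^{(r)}-H_h^{(r)})]$. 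At each order the one-step difference is the crucial object; its size is controlled through a Taylor expansion of the transition density. Under \A{A${}_{D,\text{Eul}}$} all Gaussian moments of the Euler increments match those of the frozen diffusion, so Taylor expansion leaves a remainder of order $h$ (with the usual $t_i^{-1/2}$ singularity from the derivative acting on $\tilde p$), yielding \eqref{DIFF_EUL_SCHEME}. Under \A{A${}_{D,m}$} only the moments up to order $2$ agree, so the local error is only of order $h^{1/2}$ and $\tilde p$ must be replaced by $q_m$ to accomodate the polynomial decay \A{I${}_m$}; this is where the LLT of \cite{kona:mamm:00} for sums of i.i.d. random variables enters and requires $t_i\ge h^{\delta}$ with $\delta<1/5$ for the Edgeworth-type expansion to be valid.

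The main obstacle will be to propagate the \textbf{time-dependent} constants through the infinite convolution series without collapsing them into a generic $c(T)$. Each parametrix convolution classically produces a factor $ct_i^{1/2}$ thanks to the usual bootstrap in time of the H\"older semi-norm of the difference-of-generators kernel; summing the resulting series yields a Mittag-Leffler type growth which, for our purposes, is comfortably dominated by $\exp(Ct_i^{1/2})$. I would make this explicit by showing, at each iteration $r$, a bound of the form
\begin{equation*}
|\tilde p\otimes H^{(r)}(t_i,x,y)|\le \frac{(Ct_i^{1/2})^{r}}{\Gamma(1+r/2)}\,g_c(t_i,x-y),
\end{equation*}
and the analogous discrete estimate, so that summation in $r$ produces the desired envelope $\exp(Ct_i^{1/2})$. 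The factor $t_i^{-1/2}\vee \exp(Ct_i^{1/2})$ in the right-hand side reflects the fact that in short time it is the parametrix singularity that dominates, while in large time it is the exponential growth of the series. Once these uniform-in-$t_i$ bounds are secured, \eqref{DIFF_EUL_SCHEME} and \eqref{LLT_DIFF_MARG} follow directly.
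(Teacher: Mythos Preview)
Your proposal is correct and follows essentially the same route as the paper: both rely on the Konakov--Mammen parametrix machinery (\cite{kona:mamm:00}, \cite{kona:mamm:02}, \cite{kona:mamm:09}) together with explicit tracking of the time-dependent constants through the convolution series, which is exactly what the paper does in its Appendix~\ref{APP_DIFF}. The only minor difference is that for the Euler case the paper invokes the specific error representation $(p-p^h_{\rm Eul})(t_i,x,y)=\tfrac h2\,p\otimes(L^2-L_*^2)p(t_i,x,y)+h^2R(t_i,x,y)$ from \cite{kona:mamm:02} (and cites \cite{gobe:laba:08} for the short-time $t_i^{-1/2}$ singularity), whereas you subtract the two parametrix series term by term; both lead to the same bound, though your telescoping should also account for the discrepancy between the continuous convolution $\otimes$ and the discrete one $\otimes_d$.
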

Let us mention that $q_m$ enjoys the same parabolic scaling as $g_{c} $ but has polynomial decay.
We observe the usual convergence rates in $h$ and $h^{1/2} $ respectively for the Euler scheme and the Markov Chain approximation. For general Markov Chains, this is the rate of the Gaussian LLT (see \cite{petr:05}, \cite{kona:mamm:00},\cite{kona:mamm:09}). For the Euler scheme, the innovations are already Gaussian so that the first term disappears in the previously mentioned LLT, yielding a contribution of order $h$.

The important point of the previous Lemma is that it specifies the behaviour of the constants in \textit{short} and \textit{long} time.
This allows us to balance, as for the time sensitivity, those constants with  the controls on the laws of $Z_T^\beta,\ Z_T^{\beta,h}$ given in Lemma \ref{BOUNDS_P_Z_P_Z_H}  
thanks to Lemma \ref{GROS_LEMME}.

%
\begin{proof}
For the Euler scheme, the short time behaviour of the error has already been established in \cite{gobe:laba:08} (see Theorem 2.3 therein) whereas the exponential bounds can once again be derived from the explicit form of the parametrix series used to investigate the error in \cite{kona:mamm:02}.  
For the sake of completeness, we recall these aspects in Appendix \ref{APP_DIFF}. 

For  a general Markov Chain approaching the diffusion, i.e. for which the innovations are not necessarily Gaussian, to enter a Gaussian asymptotics specified by the Gaussian LLT, a certain number of time steps is needed. This is why we impose the condition $ t_i\ge h^{\delta}$, $\delta<1/5 $ which is the one required in \cite{kona:mamm:09} (see assumption (B2) therein) to establish the Gaussian LLT in short time.
The behaviour of the constants in large time can also be derived from \cite{kona:mamm:00} (similarly to the procedure presented in Appendix \ref{APP_DIFF}). For the control in short time we refer to Remark 1 after Theorem 1 in \cite{kona:mamm:09}. Anyhow, for the reader's convenience, we recall in Appendix \ref{APP_DIFF} 
the approach developed therein.
\end{proof}



We thus directly get from \eqref{ERROR_1} 
and  \eqref{DIFF_EUL_SCHEME} in Lemma \ref{GROSSE_BORNE_SPATIALE_DIFF} 
 that there exists $(c,C):=c($\A{A${}_D$})$\ge 1$ s.t. for all $T>0, (x,y)\in (\R^{d})^2 $, $x\neq y$:
\begin{equation}
\label{PREAL_CTR_E12}
\begin{split}
|{\mathcal E}_{2}^{{\rm Eul}}(T,x,y)|&\le C h 
\sum_{i\ge 1}  ^{} \P[Z_{T}^{\beta,h}=t_i]\exp(Ct_i^{1/2})t_i^{-1/2}g_{c}(t_i,x-y) 
.
\end{split}
\end{equation}
The result \eqref{CTR_E12_EUL} for the Euler scheme now follows from equation \eqref{EQ_GROS_LEMME_DIFF} in Lemma \ref{GROS_LEMME}.

For the Markov Chain approximation, setting $i_C:=\lceil h^{-4/5-\varepsilon}\rceil $ we write using \eqref{LLT_DIFF_MARG}:
\begin{equation}
\label{DEC_SPACE_MARKOV}
\begin{split}
|{\mathcal E}_{2}(T,x,y)|&\le  C\bigg\{ \sum_{i\in \leftB 1,i_C\rightB} \P[Z_{T}^h=t_i]|(p-p^h)(t_i,x,y)|\\
&+ \sqrt h \sum_{i>i_C  } \P[Z_{T}^{\beta,h}=t_i] \exp(C t_i^{1/2})\frac{1}{t_i^{1/2}}q_m(t_i,x-y)\bigg\}\\
&\le c_\varepsilon\{ {\mathcal E}_{21}(T,x,y)+\sqrt h {\mathcal E}_{\beta,{\rm Space, LLT}}^M(T,x-y)\},
\end{split}
\end{equation}
exploiting equation \eqref{EQ_GROS_LEMME_CDM} in Lemma \ref{GROS_LEMME}  for the last inequality in the previous r.h.s. It therefore remains to control $ {\mathcal E}_{21}(T,x,y)$.

For this contribution, we do not have comparison results between the two densities. The technique thus consists in controlling each of the terms. Observe to this end that a simple parametrix expansion for the scheme, similar to the one performed in \cite{kona:mamm:00} using Lemmas 3.6 and  3.11 therein, 
(see also Appendix \ref{ASYMP_SMALL_TIMES}),  
would give that there exists $c:=c($\A{A${}_{D,m}$}$,d)\ge 1$ s.t. for all $i\in \leftB 1,i_C\rightB , (x,y)\in (\R^d)^2$: 
\begin{equation}
\label{CTR_DENS_SCHEME}
p^h(t_i,x,y)\le \frac{c\exp(ct_i^{1/2})}{t_i^{d/2}}\left(1+\frac {|x-y|}{t_i^{1/2}} \right)^{-2(m-1)}.
\end{equation}
We thus derive from the definition in \eqref{DEC_SPACE_MARKOV} and equations \eqref{CTR_BOUNDS_P_Z_P_Z_H}, \eqref{HK_DIFF}, \eqref{CTR_DENS_SCHEME} that:
\begin{eqnarray*}
{\mathcal E}_{21}(T,x,y)\le \frac{c}{T^{\beta}}h \exp(cT^{\beta/2}) \\
\left \{ \sum_{i=1}^{i_C} t_{i}^{-d/2}\left[\exp\left(-\frac{c^{-1}}2\frac{|x-y|^2}{t_i}  \right) +\left(1+\frac {|x-y|}{t_i^{1/2}} \right)^{-2(m-1)} \right]\right\}.
\end{eqnarray*}
From the arguments in the proof of Lemma \ref{GROS_LEMME}, we can write for all $d\ge 1 $:
\begin{eqnarray}
\label{CTR_E121}
{\mathcal E}_{21}(T,x,y)&\le &\frac{c}{T^{\beta}}\exp(cT^{\beta/2})  \frac{t_{i_C}^{\frac 12\I_{d\le 2}}
}{|x-y|^{d-2+\I_{d\le2}}}\nonumber\\
&&\times\left[\exp\left(-c\frac{|x-y|^2}{t_{i_C}}  \right) +\left(1+\frac {|x-y|}{t_{i_C}^{1/2}} \right)^{-2(m-1)+(d-2)+\I_{d\le 2}} \right]\nonumber\\
&&\le {\mathcal E}_{\beta,{\rm Space, NoLLT}}^{M,\varepsilon}(T,x-y,h),
\end{eqnarray}
recalling $t_{i_C}=h^{1/5-\varepsilon}$.
Putting together estimates \eqref{CTR_E121} into \eqref{DEC_SPACE_MARKOV} for the Markov Chain approximation 
concludes 
the proof of Lemma \ref{LEMME_S_SENS_D}.

\subsubsection{The case of strictly stable driven SDE}

Similarly to the diffusive case, the crucial point is the following Lemma.
\begin{LEMME}\label{LE_LEMME_SPAT_STABLE}
Under \A{A${}_{S} $} there exists  $C:=C($\A{A${}_{S}$}), s.t. for a given $t_i:=ih$ we have:
\begin{equation}
\label{CTR_DIFF_S_TIME}
\begin{split}
|(p-p_{{\rm Eul}}^h)(t_i,x,y)|\le Ch\{ \frac{1}{t_i}\vee \exp(Ct_i^\omega)\}p_S(t_i,x-y),\\
p_S(t_i,z):=\frac{c_\alpha}{t_i^{d/\alpha}} \frac{1}{\left(1+\frac{|z|}{t_i^{1/\alpha}} \right)^{d+\alpha}},\ \forall z\in \R^d,\ \int_{\R^d} p_{S}(t_i,z)dz=1.
\end{split}
\end{equation}
\end{LEMME}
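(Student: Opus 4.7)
The plan is to transpose to the strictly stable setting the parametrix strategy previously used in the diffusive case (see Lemma \ref{GROSSE_BORNE_SPATIALE_DIFF}), building on the analysis of \cite{kona:meno:10} where the Euler scheme of symmetric $\alpha$-stable SDEs is treated at a fixed time horizon. Following a McKean--Singer type approach, I would represent both $p(t_i,\cdot,\cdot)$ and $p_{{\rm Eul}}^h(t_i,\cdot,\cdot)$ as parametrix series built on the \emph{same} frozen stable density $\tilde p(t_i,x,y)$ obtained by fixing the coefficients of the SDE \eqref{SDE} at the terminal point $y$. Denoting by $H$ (respectively $H^h$) the parametrix kernel associated with the continuous (respectively Euler-discretized) generator, this yields
\begin{equation*}
p - p_{{\rm Eul}}^h \;=\; \sum_{k\ge 1} \tilde p \otimes \bigl( H^{\otimes k} - (H^h)^{\otimes k}\bigr),
\end{equation*}
which I would recast as a telescoping sum of one-step discrepancies $(H-H^h)$ flanked by iterated convolutions against $H$ and $H^h$.

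The key one-step input is a short-time bound of the form $|(H-H^h)(u,x,z)|\le C h\, u^{-1}\, p_S(u,x-z)$, established in \cite{kona:meno:10}. Here the factor $u^{-1}$ matches exactly the time-derivative scale of the frozen stable density given by Lemma \ref{HKUSUEL}, and the factor $h$ encodes the Euler error. In particular, this already delivers the short-time regime of \eqref{CTR_DIFF_S_TIME}. To extend the bound to arbitrary $t_i$, I would iterate using the stable semigroup-type inequality $p_S(u,\cdot)\ast p_S(v,\cdot)\le C\, p_S(u+v,\cdot)$, which propagates the heavy-tailed profile $p_S(t_i,x-y)$ through each convolution. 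Each additional step contributes, via the H\"older regularity of $\sigma$ provided by \A{S}, an extra smoothing factor of order $u^\omega$ with $\omega=\frac 1\alpha\wedge 1$; summing the resulting series yields the long-time factor $\exp(C t_i^\omega)$. Taking the maximum of the two regimes produces the announced bound.

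The main obstacle will be that, unlike in the Gaussian setting, the shape $p_S$ has only \emph{algebraic} tails, so its preservation under iterated parametrix convolutions must be verified carefully and the stable-scale constants tracked explicitly. A secondary technical point is the treatment of the drift $b$: under \A{B} it vanishes when $\alpha\le 1$, which is precisely what prevents it from dominating the stable fluctuations; when $\alpha>1$ the drift contribution to $H-H^h$ is of lower order in $h$ than the principal stable term and can be absorbed without modifying the final estimate. The exponentially explosive time dependence of the frozen-density bounds in Lemma \ref{HKUSUEL} then carries through the series and explains the large-time factor $\exp(Ct_i^\omega)$ in the statement.
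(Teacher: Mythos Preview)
Your high-level parametrix picture is sound, but the decomposition you propose is not the one the paper uses, and as written it has a structural gap. You write
\[
p - p_{{\rm Eul}}^h \;=\; \sum_{k\ge 1} \tilde p \otimes \bigl( H^{\otimes k} - (H^h)^{\otimes k}\bigr),
\]
with the \emph{same} convolution $\otimes$ on both sides. But the parametrix series for the scheme is built with the \emph{discrete} time convolution $\otimes_h$, so the difference of the two series is not a difference of iterates of $H$ and $H^h$ under a common $\otimes$: one must also compare $\otimes$ with $\otimes_h$. Relatedly, the ``one-step'' estimate $|(H-H^h)(u,x,z)|\le C h\, u^{-1}\, p_S(u,x-z)$ is not what \cite{kona:meno:10} actually provides; that reference delivers the Lindeberg-type error expansion
\[
(p-p_{{\rm Eul}}^h)(t_i,x,y)=\tfrac{h}{2}\bigl(p\otimes (L^2-L_*^2)p\bigr)(t_i,x,y)+h^2 R(t_i,x,y),
\]
which already isolates the factor $h$ and the leading operator discrepancy. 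This is precisely the formula the paper exploits (it is the stable analogue of \eqref{ERR_EXP_EUL}).

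The paper's technical work is then entirely about bounding $(L^2-L_*^2)p(t,x,y)$ in the strictly stable setting. Writing $L$ via the jump kernel $\Theta(x,z)=f(\sigma^{-1}(x)z/|\sigma^{-1}(x)z|)\,|\sigma^{-1}(x)z|^{-(d+\alpha)}\det(\sigma(x))^{-1}$, the difference $L^2-L_*^2$ reduces (when $\alpha<1$, the general case being similar) to a double jump integral
\[
D(t,x,y)=\int_{\R^d}\!\!\int_{\R^d}\bigl(p(t,x+z+z',y)-p(t,x+z',y)\bigr)\bigl(\Theta(x+z',z)-\Theta(x,z)\bigr)\Theta(x,z')\,dz\,dz'.
\]
The key estimate, obtained by splitting each of $z,z'$ into small ($\le \tfrac14 t^{1/\alpha}$) and large jumps and using the stable-kernel trick $p_S(t,x+w-y)\le c\,p_S(t,x-y)$ for $|w|\lesssim t^{1/\alpha}$ together with the off-diagonal decomposition around $B(x-y,\varepsilon|x-y|)$, is
\[
|D(t,x,y)|\le \frac{c}{t^{2}}\exp(ct^{\omega})\,p_S(t,x-y).
\]
Plugging this into $p\otimes(\cdot)$ and symmetrizing the convolution at $t_i/2$ (taking the adjoint on the short half, exactly as in the diffusive case) turns the $t^{-2}$ singularity into the $t_i^{-1}$ factor of \eqref{CTR_DIFF_S_TIME}; the exponential $\exp(Ct_i^{\omega})$ comes out of the iterated kernel bound $|\tilde p\otimes H^{(i)}|\le (c_1 t)^{i\omega}\Gamma(i-\omega)^{-1}p_S$ from \cite{kolo:00}. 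So the object you should be estimating is $(L^2-L_*^2)p$, not $H-H^h$, and the heart of the proof is the small/large-jump dichotomy for the double integral above rather than a semigroup inequality for $p_S$.
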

\begin{proof}
The exponential control is derived similarly to the diffusive case, and comes from the specific form of the parametrix expansion used to analyze the error.
The control in small time can also be derived from this representation, similarly to what occurs in \cite{kona:mamm:09}.  To make this last point clear we give some details in Appendix \ref{ASYMP_SMALL_TIMES}.
\end{proof}
Lemma \ref{LEMME_S_SENS_S} now follows from Lemma \ref{LE_LEMME_SPAT_STABLE} and equation \eqref{EQ_GROS_LEMME_STABLE} in Lemma \ref{GROS_LEMME}.

\section{Perspectives}
\label{PERP}
We did not consider in this work the case of a general Markov Chain approximating stable driven SDEs. To do so the first step would consist in establishing a LLT for sums of i.i.d random variables belonging to the domain of attraction of a stable law. Such results have been thoroughly studied in the scalar case, see e.g. Mitalauskas and Statuljavi{\v{c}}jus \cite{mita:stat:76} or the monograph by Christoph and Wolf \cite{chri:wolf:92}. We also refer to the PhD. of Squartini  \cite{squa:PhD:07} and to \cite{molc:petr:squa:07} (Cauchy case), for recent developments. We believe those results can be extended to the multidimensional case and would also \textit{transmit} to a Markov Chain approximation through a continuity technique like the parametrix. 
Now such limit theorems could also allow to consider more general \textit{fractional like} time derivatives that would be associated with the inverse of 
inhomogeneous stable subordinators $(S_t^+)_{t\ge 0} $ with generators
\begin{equation*}
L_t\phi(s)=\int_{\R_+^{*}} (\phi(s+u)-\phi(s)) g(t,u)\frac{du}{u^{1+\alpha}} . 
\end{equation*}
Eventually,  an analysis of generalized \textit{fractional like} derivatives involving as well the spatial variable like in Kolokoltsov's work, see e.g. Proposition 4.4 in \cite{kolo:09}, would require to establish LLT for approximations of the couple $(S_t^+,X_t) $ with generator
\begin{equation*}
\begin{split}
{\mathcal L}_t\phi(s,x)=\int_{\R_+^{*}} (\phi(s+u,x)-\phi(s,x)) g(t,u,x)\frac{du}{u^{1+\alpha}}\\
+\int_{\R^d} \left(\phi(s,x+z)-\phi(s,x)\right) f(t,x,z)\frac{dz}{|z|^{d+\alpha}}. 
\end{split}
\end{equation*}

This case is much more delicate in the sense that the complete coupling breaks the independence in \eqref{REP_FK} between the spatial motion and the process associated with the fractional like time derivative. We anyhow believe that the arguments of \cite{kolo:09} can be adapted provided the LLT holds. This will concern further research. 
\appendix
\section{Derivation of the Error Bounds in Small and Large Time.}
\label{ASYMP_SMALL_TIMES}
We briefly explain in this section the bounds appearing in \eqref{DIFF_EUL_SCHEME}, \eqref{LLT_DIFF_MARG} and \eqref{CTR_DIFF_S_TIME} that are crucial for our analysis in order to balance the indicated explosive behaviours with the decays of the density of the inverse subordinator, see Lemma \ref{BOUNDS_P_Z_P_Z_H}. Actually, the only control that needs to be fully justified is \eqref{CTR_DIFF_S_TIME} in small time. The other bounds in small time are already established in the previously quoted papers. Also, the exponential bounds are in some sense \textit{classical}.
  
Let us consider the case of the Euler scheme associated with \eqref{SDE} first. The crucial point is that the densities $p(t_i,x,.), p^h_{{\rm Eul}}(t_i,x,.)$, of respectively  $X_{t_i}$ in \eqref{SDE} and its Euler scheme  $X_{t_i}^h $ in \eqref{SCHEME} starting at $x$, enjoy a \textit{parametrix} expansion, see again \cite{frie:64}, \cite{mcke:sing:67}, \cite{kona:mamm:00}, \cite{kona:mamm:02} for the diffusive case and \cite{kolo:00} and \cite{kona:meno:10} for the stable one.  
We proceed with the simplest example of a non degenerate diffusion, but the  results for the other cases can be derived similarly.
Consider the dynamics \eqref{SDE} under \A{A${}_D $}. The p.d.f $p(t,x,.)$ of $X_t^x $ exists for every $t>0$ and writes:
\begin{eqnarray}
\label{DEV_PARAM}
p(t,x,y)=\tilde p(t,x,y)+\sum_{i\ge 1 }\tilde p\otimes H^{(i)}(t,x,y),
\end{eqnarray} 
where for all $(t,x,y)\in \R_+^{*}\times \R^{2d},\ H(t,x,y)=(L-\tilde L)\tilde p(t,x,y) $ where $\tilde p(t,x,y) $ stands for the p.d.f. at point $y$ of the \textit{frozen} Gaussian or stable process $\tilde X_t^{x,y}=x+b(y)t+\sigma(y)Y_t $, with generator $\tilde L $. Note that the frozen density is always considered at the freezing point. In \eqref{DEV_PARAM} for two integrable functions $f,g:\R_+^{*}\times (\R^d)^2 \rightarrow \R$, we denote $f\otimes g(t,x,y):=\int_0^t du\int_{\R^d}f(u,x,z)g(t-u,z,y)dz $ and also for all $i\ge 1,\ H^{(i)}:=H^{(i-1)}\otimes H,\ f\otimes H^{(0)}=f
$.

\subsection{Diffusion Case.}
\label{APP_DIFF}
It is  known (see \cite{frie:64}, \cite{kona:mamm:02}) that, under \A{A${}_D$}, there exist $(c_1,c_2):=(c_1,c_2)($\A{A${}_D $})$\ge 1 $ s.t. for all $i\ge 1, t>0 $,
\begin{equation}
\label{CTR_KER}
|\tilde p\otimes H^{(i)}(t,x,y)|\le \frac{(c_1 t)^{i/2}}{\Gamma(i-1/2)}g_{c_2}(t,x-y).
\end{equation}
From equations \eqref{CTR_KER} and \eqref{DEV_PARAM}, it is easily seen that there exists $(c_1,c_2):=(c_1,c_2)($\A{A${}_D $}) s.t.
\begin{equation}
\label{Borne_EXP}
p(t,x,y)\le c_1 \exp(c_1 t^{1/2})g_{c_2}(t,x-y).
\end{equation}
This explains the exponential control. This approach is due to McKean and Singer \cite{mcke:sing:67}. It can be easily transposed to the approximation schemes. The p.d.f. of Euler approximations enjoy similar properties as the one of the initial SDE (see the quoted references).

The control in small time comes from the explicit form of the error decomposition  which is similar to \eqref{DEV_PARAM}. Namely, one has (see \cite{kona:mamm:02} in the considered case)
\begin{equation}
\begin{split}
(p-p^h_{{\rm Eul}})(t_i,x,y)
&=\frac {h}2\left( p\otimes (L^2- L_*^2) p\right)(t_i,x,y)+h^2 R(t_i,x,y),
\end{split}
\label{ERR_EXP_EUL}
\end{equation}
where the remainder term satisfies as well $|R(t_i,x,y)|\le  c_1 \exp(c_1t_i^{1/2})t_i^{-1/2} g_{c_2}(t_i,x-y)  $. 
Also, we denote for $i\in \{1,2\} $, $L_* ^{i} \phi(x):=(L_\xi^i \phi(x))|_{\xi=x},\ L_\xi\phi(x)=\langle b(\xi),\nabla \phi(x)\rangle +\frac 12{\rm Tr}(a(\xi)D_x^2\phi(x))$. Observe that $L\phi(x)=L_*\phi(x) $, but more generally the operators do not coincide anymore when iterated. Precisely considering $d=1$ to alleviate the notations, we get for all $t\in (0,t_i) $:
\begin{eqnarray} 
(L^2-L_*^2)p(t,x,y)=\nonumber\\
\{b(x)b'(x)+\frac 12 a(x)b''(x) \} \partial_x p(t,x,y)\nonumber\\
+\{\frac12 b(x)a'(x)+a(x)b'(x)+\frac14 a(x)a''(x) \}\partial_x^2 p(t,x,y)\nonumber\\
+\frac 12 a(x)a'(x)\partial_x^3 p(t,x,y).\label{MAIN_TERM_DEV}
\end{eqnarray}
Now, the controls on the density and its derivatives imply under the smoothness assumptions in \A{A${}_D$} similarly to 
\eqref{Borne_EXP} that there exists $\bar c:=\bar c($\A{A${}_D$}) s.t. for all multi-indexes $\alpha,\beta, |\alpha|+|\beta|\le 3 $ for all $t>0$:
\begin{equation}
\label{CTR_DER}
|\partial_x^\alpha\partial_y^\beta\partial p(t,x,y)| \le \frac{\bar c}{t^{\frac{|\alpha|+|\beta|}2}}\exp(\bar ct^{1/2}) g_{c_2}(t,x-y).
\end{equation}
We refer to Friedman \cite{frie:64} for details. Hence, the most \textit{singular} term in \eqref{MAIN_TERM_DEV} is the last one.
Precisely,
\begin{eqnarray}
|a(x)a'(x)\partial_x^3 p(t,x,y)|&\le& \frac{c}{t^{3/2}} \exp(\bar ct^{1/2}) g_{c_2}(t,x-y).
\label{worst_term}
\end{eqnarray}
Now, recalling from \eqref{ERR_EXP_EUL} that this control needs to be plugged in the time-space convolution $p\otimes [(L^2-L_*^2)]p(t_i,x,y)=\int_0^{t_i} ds \int_{\R^d}p(s,x,z) (L^2-L_*^2)p(t_i-s,z,y)dz$,
two cases can occur: if $s\in [0,t_i/2] $ the control in \eqref{worst_term} is not singular and gives the stated bound of $t_i^{-1/2}$  once integrated in time. On the other hand when $s\in [t_i/2,t_i] $, some integration by parts need to be performed for the singular term yielding a third order spatial derivative in $z$ of $ p(s,x,z)$. Thanks to \eqref{CTR_DER}, which gives $ |\partial_z^3 p(s,x,z)| \le \frac{\bar c}{s^{\frac{3}2}}\exp(\bar cs^{1/2}) g_{c_2}(s,x-z)$, and the integration in time, we get again a  time singularity in $t_i^{-1/2} $ for the main contribution in \eqref{ERR_EXP_EUL}. The remainder can be handled similarly (see \cite{kona:mamm:02}). This analysis can be performed as well for the Markov Chain approximation, see \cite{kona:mamm:09}.

\subsection{Strictly Stable Case.}
The expansion \eqref{DEV_PARAM} also holds under \A{A${}_S$}, see \cite{kolo:00} from which we have
\begin{eqnarray*}
|\tilde p\otimes H^{(i)}(t,x,y)|\le \frac{(c_1 t)^{i\omega}}{\Gamma(i-\omega)}p_{S}(t,x-y), \ \omega:=\left(\frac 1\alpha \wedge 1\right)  \in (1/2,1].
\end{eqnarray*}
This therefore gives the indicated exponential bound  $p(t,x,y)\le c\exp(ct^\omega)p_S(t,x-y) $ for $c:=c($\A{A${}_S$}).
Now, let us denote by $f$ the spherical density of the measure $\nu $ (see assumption \A{ND}). We can rewrite: 
\begin{eqnarray*}
L\varphi(x)=L_*\varphi(x)=
\langle b(x),\nabla_x \varphi(x)\rangle\\
+\int_{\R^d}\{ \varphi(x+\sigma(x)z)-\varphi(x) -\langle \nabla_x \varphi(x), \sigma(x)z\rangle \I_{|\sigma(x) z|\le 1}\}f\left(\frac{z}{|z|}\right)\frac{dz}{|z|^{d+\alpha}} \\
=\langle b(x),\nabla_x \varphi(x)\rangle+\int_{\R^d}\{ \varphi(x+z)-\varphi(x) -\langle \nabla_x \varphi(x), z\rangle \I_{|z|\le 1}\} 
\Theta(x,z)
dz,
\end{eqnarray*}
where we denoted 
for all $\zeta\in \R^d $,
\begin{eqnarray}
\label{DEF_THETA_BIS}
\Theta(\zeta,z):=\frac{f\left(\frac{\sigma^{-1}(\zeta)z}{|\sigma^{-1}(\zeta)z|} \right)}{|\sigma^{-1}(\zeta)z|^{d+\alpha}\det(\sigma(\zeta))}.
\end{eqnarray}
With the notations of the previous section (considering $\alpha<1$ to avoid the cut-off and alleviate the notations) we get:
\begin{eqnarray*}
L^2p(t,x,y)&=&\langle b(x), \nabla_x  b(x) \nabla_xp(t,x,y)\rangle
+\langle b(x),   D_x^2p(t,x,y) b(x) \rangle\\
&&+\langle b(x),\int_{\R^d} (\nabla_x p(t,x+z,y)-\nabla_x p(t,x,y))\Theta(x,z)dz\rangle \\
&&+\langle b(x),\int_{\R^d} ( p(t,x+z,y)- p(t,x,y))\nabla_x \Theta(x,z)dz\rangle \\
&&+\int_{\R^d}\bigg\{[\int_{\R^d}\{p(t,x+z'+z,y)-p(t,x+z',y)    \}\Theta(x+z',z)dz]\\
&& -[\int_{\R^d}\{p(t,x+z,y)-p(t,x,y)    \}\Theta(x,z)dz]  \bigg\} \Theta(x,z')dz',\\
L_*^2p(t,x,y)&=&\bigg[\langle b(x),    D_x^2p(t,x,y) b(\xi)\rangle\\
&&+\langle b(x),\int_{\R^d} (\nabla_x p(t,x+z,y)-\nabla_x p(t,x,y))\Theta(\xi,z)dz\rangle \\
&&+\int_{\R^d}\bigg\{[\int_{\R^d}\{p(t,x+z'+z,y)-p(t,x+z',y)    \}\Theta(\xi,z)dz]\\
&& -\left.[\int_{\R^d}\{p(t,x+z,y)-p(t,x,y)    \}\Theta(x,z)dz]  \bigg\} \Theta(\xi,z')dz' \bigg]\right|_{\xi=x}.
\end{eqnarray*}
Thus, the difference writes:
\begin{eqnarray*}
(L^2-L_*^2)p(t,x,y)&=&\langle b(x), \nabla_x  b(x) \nabla_xp(t,x,y)\rangle\\
&&+\langle b(x),\int_{\R^d} ( p(t,x+z,y)- p(t,x,y))\nabla_x \Theta(x,z)dz\rangle\\
&&+\int_{\R^d}\bigg\{[\int_{\R^d}\{p(t,x+z'+z,y)-p(t,x+z',y)    \}\\
&&\times \{ \Theta(x+z',z)-\Theta(x,z)\}dz]\times \Theta(x,z')dz'.
\end{eqnarray*}
We mention that, even though we assumed $b=0$ for $\alpha<1 $ we kept the explicit dependence on $b$ in the above computations for the sake of completeness. Let us now focus on the last term in the above equation:
\begin{eqnarray*}
D(t,x,y):=\int_{\R^d} \bigg\{ \int_{\R^d} (p(t,x+z+z',y)-p(t,x+z',y) \\
 \times 
\{ \Theta(x+z',z)-\Theta(x,z)\} 
dz
\bigg\}
\times \Theta(x,z')dz',
 \end{eqnarray*}
which is the only contribution if $\alpha<1 $ and which would be the most singular for $\alpha\ge 1 $.
 The smoothness and non-degeneracy assumptions \A{S}, \A{UE}, \A{ND} on the coefficients yield:
\begin{eqnarray*}
|\nabla_x\Theta(x,z)|&\le& \frac{c}{|z|^{d+\alpha}},\\
|D(t,x,y)|&\le &c \int_{\R^d}  \int_{\R^d} |p(t,x+z+z',y)-p(t,x+z',y)| \\
&&\times \frac{\I_{|z'|>\frac 14 t^{1/\alpha}} +|z'|\I_{|z'|\le \frac 14 t^{1/\alpha}}}{|z|^{d+\alpha}}\times  \frac{1}{|z'|^{d+\alpha}}dz' dz, 
\end{eqnarray*}
where we have truncated with respect to the characteristic time-scale $t^{1/\alpha} $ (up to a constant) in the variable $z'$. Indeed, if 
$|z'|\le \frac 14 t^{1/\alpha}$ we perform a Taylor expansion and exploit the previous bound on $\nabla_x \Theta $ whereas if $|z'|> \frac 14 t^{1/\alpha} $ we simply use the uniform bound of $\Theta $ deriving from its definition in \eqref{DEF_THETA_BIS} and the non degeneracy assumptions.
Write now:
\begin{eqnarray*}
|D(t,x,y)|&\le& c\int_{|z'|\le \frac 14 t^{1/\alpha}}  \int_{\R^d} |p(t,x+z+z',y)-p(t,x+z',y)| \\
&&\times \frac{|z'|}{|z'|^{d+\alpha}}\times  \frac{1}{|z|^{d+\alpha}}dz' dz\\
&&+c\int_{|z'|>\frac 14 t^{1/\alpha}}  \int_{\R^d} |p(t,x+z+z',y)-p(t,x+z',y)| \\
&&\times \frac{1}{|z'|^{d+\alpha}}\times  \frac{1}{|z|^{d+\alpha}}dz' dz:=(D_1+D_2)(t,x,y).
\end{eqnarray*}
Let us treat those two terms separately. 
Recalling from \cite{kolo:00} and the above bounds that 
$|\nabla_x  p(t,x,y)|\le \frac{c\exp(ct^\omega)}{t^{1/\alpha}} p_S(t,x-y)$ we derive:
\begin{eqnarray*}
D_1(t,x,y)&\le &  \frac{c\exp(ct^\omega)}{t^{1/\alpha}} \int_{|z'|\le \frac 14 t^{1/\alpha},|z|\le \frac14 t^{1/\alpha}}  p_S(t,x+z'+\theta z-y) |z|\\
&&\times \frac{|z'|}{|z'|^{d+\alpha}}\times  \frac{1}{|z|^{d+\alpha}}dz' dz\\
&&+ \int_{|z'|\le \frac 14 t^{1/\alpha},|z|>\frac 14 t^{1/\alpha}}  |p(t,x+z+z',y)-p(t,x+z',y)| \\
&&\times \frac{|z'|}{|z'|^{d+\alpha}}\times  \frac{1}{|z|^{d+\alpha}}dz' dz:=(D_{11}+D_{12})(t,x,y),
\end{eqnarray*}
for some $\theta:=\theta(x,z,z',y)\in [0,1] $ in $D_{11}$. In that contribution,  since both $z,z'$ are small w.r.t. the characteristic time $t^{1/\alpha} $, we have:
\begin{eqnarray}
p_S(t,x+z'+\theta z-y)&\le& \frac{c}{t^{d/\alpha}}\frac{1}{(1+\frac{|x+z'+\theta z-y|}{t^{1/\alpha}})^{d+\alpha}}\le\frac{c}{t^{d/\alpha}}\frac{1}{(\frac 12+\frac{|x-y|}{t^{1/\alpha}})^{d+\alpha}}\nonumber\\
&\le & c 2^{d+\alpha}p_S(t,x-y).\label{FIRST_EQUIV_STABLE}
\end{eqnarray}
Hence:
\begin{eqnarray*}
D_{11}(t,x,y)&\le & \frac{c\exp(ct^\omega)}{t^{1/\alpha}} p_S(t,x-y)\left(\int_{r\le \frac14 t^{1/\alpha}}\frac{dr}{r^\alpha}\right)^{2}\le \frac{c\exp(ct^\omega)}{t^{2-\frac1\alpha}}p_S(t,x-y).
\end{eqnarray*}
For the contribution $  |p(t,x+z+z',y)-p(t,x+z',y)|$ in $D_{12}(t,x,y)$, since $|z|$ can be large, we write directly, recalling as well that $|z'| $ is small
and proceeding as in \eqref{FIRST_EQUIV_STABLE}:
\begin{eqnarray}
|p(t,x+z+z',y)-p(t,x+z',y)|&\le& |p(t,x+z+z',y)|+|p(t,x+z',y)|\nonumber \\
&\le &c (p_S(t,x+z-y)+p_S(t,x-y)).\label{CTR_PZZP}
\end{eqnarray}
Observe that if $|x-y|\le t^{1/\alpha}$, i.e. the diagonal regime holds for $p_S(t,x-y) $, we can then  use the global upper-bound $ (p_S(t,x+z-y)+p_S(t,x-y))\le \frac{c}{t^{d/\alpha}}$, in the control
\eqref{CTR_PZZP} so that
\begin{eqnarray*}
D_{12}(t,x,y)&\le &c\exp(ct^\omega) p_S(t,x-y)\int_{|z'|\le \frac 14t^{1/\alpha}}\frac{dz'|z'|}{|z'|^{d+\alpha}}\int_{|z|> \frac 14t^{1/\alpha}}\frac{dz}{|z|^{d+\alpha}}\\
&\le & c\exp(ct^\omega) p_S(t,x-y)t^{-2+\frac 1 \alpha}.
\end{eqnarray*}
If now $|x-y|> t^{1/\alpha} $, we have  for all given $\varepsilon\in (0,1) $, $p_S(t,x+z-y)\le c_\varepsilon p_S(t,x-y) $ if $z\not \in B(x-y,\varepsilon |x-y|) $.  Indeed, $|x-y+z|\ge\big| |x-y|-|z|\big| \ge \varepsilon |x-y|$. On the other hand, if $z\in B(x-y,\varepsilon|x-y|),\ |z|\ge (1-\varepsilon)|x-y| $ and $|z|^{-(d+\alpha)}\le ((1-\varepsilon)|x-y|)^{-(d+\alpha)}\le c_\varepsilon  t^{-1}p_S(t,x-y)$, up to a possible modification of $c_\varepsilon$.
Thus,
\begin{eqnarray*}
D_{12}(t,x,y)\le c\exp(ct^\omega)\bigg\{t^{-2+\frac 1\alpha}p_S(t,x-y)+\\
c_\varepsilon p_S(t,x-y)\int_{B(x-y),\varepsilon |x-y|)^c\cap\{|z|>\frac 14 t^{1/\alpha} \}}\frac{dz}{|z|^{d+\alpha}}\int_{|z'|\le \frac 14 t^{1/\alpha}} \frac{|z'|dz'}{|z'|^{d+\alpha}} +\\
\frac{1}{\{(1-\varepsilon)|x-y|\}^{d+\alpha}}\int_{B(x-y),\varepsilon |x-y|) 
}p_S(t,x+z-y)dz\int_{|z'|\le \frac 14t^{1/\alpha}} \frac{|z'|dz'}{|z'|^{d+\alpha}}
\bigg\}\\
\le c\exp(ct^\omega)p_S(t,x-y)t^{-2+\frac 1\alpha}.
\end{eqnarray*}
This proves that:
\begin{equation}
\label{CTR_D1}
D_1(t,x,y)\le c\exp(ct^\omega)p_S(t,x-y)t^{-2+\frac 1\alpha}.
\end{equation}
Let us observe that we have some continuity for the singularity w.r.t. the stability index, w.r.t. to the diffusive case, as far as the small jumps are concerned (see equation \eqref{worst_term}). The large jumps deteriorate the singularity.
Precisely,
\begin{eqnarray*}
|D_2(t,x,y)|\le \int_{|z'|> \frac 14 t^{1/\alpha}}\frac{dz'}{|z'|^{d+\alpha}}\\
\times \int_{\R^{d}}|p(t,x+z'+z,y)-p(t,x+z',y)| \frac{dz}{|z|^{d+\alpha}}.
\end{eqnarray*}
From the previous arguments, splitting again the small and large jumps in the $z$ variable w.r.t. the characteristic time scale $t^{1/\alpha} $, we get:
\begin{eqnarray*}
\int_{\R^{d}}|p(t,x+z'+z,y)-p(t,x+z',y)| \frac{dz}{|z|^{d+\alpha}}\le \frac ct\exp(ct^{\omega})p_S(t,x+z'-y).
\end{eqnarray*}
This therefore gives following the dichotomy used for the term $D_{12}(t,x,y)$:
\begin{eqnarray*}
|D_2(t,x,y)|\le \frac c{t^2}\exp(ct^{\omega})p_S(t,x-y),
\end{eqnarray*}
which together with \eqref{CTR_D1} indeed gives the bound
$|D(t,x,y)|\le \frac c{t^2}\exp(ct^{\omega})p_S(t,x-y) $.

Let us emphasize from \cite{kona:meno:10} that
 the expansion \eqref{ERR_EXP_EUL} also holds in the strictly stable case. The previous control then gives the result, similarly to the discussion in the previous paragraph if $s\in [0,t_i/2]$. For $s\in [t_i/2,t_i]$ we take the spatial adjoint and the associated contribution can be analyzed similarly. Eventually, this analysis extends to the remainder terms in \eqref{ERR_EXP_EUL}.

\section{Two-sided Heat Kernel Estimates}
\label{APP_HK}

We derive in this section two-sided estimates for the density of $X_{Z_T^\beta}$. Precisely we have the following Theorem.
\begin{THM}[Two-Sided Heat Kernel Bounds]
\label{HK_BOUNDS_FRAC}
\hspace*{2cm}
\begin{trivlist}
\item[-] \textbf{Diffusive Case:} Under \A{A${}_D $}, there exists $c:=c(\beta, $\A{A${}_D $})$\ge 1 $ s.t. 
for a given $T>0$ and for all $(x,y)\in (\R^d)^2 $, one has:
\begin{trivlist}
\item[$\bullet $] For $d=1$:
\begin{eqnarray}
\frac{c^{-1}}{T^{\beta/2}}\bigg \{ \exp(-cT^{\beta}) \exp\left(-c\frac{|x-y|^2}{T^\beta} \right)+\exp(-cT 
)\exp\left(-c\Big\{\frac{|x-y|^2}{T^\beta}\Big\}^{\frac{1}{2-\beta}} \right)\bigg\}\nonumber\\
\le p_{X_{Z_T^\beta}}(x,y) \le \nonumber\\
\frac{c}{T^{\beta/2}}\bigg\{\exp(cT^{\frac \beta 2})\exp\left(-c^{-1}\frac{|x-y|^2}{T^\beta} \right) +\exp(cT^{\frac \beta{1+\beta})}) \exp\left(-c^{-1}\Big\{\frac{|x-y|^2}{T^\beta}\Big\}^{\frac{1}{2-\beta} } \right)\bigg \}
.\nonumber \\
\end{eqnarray}
\item[$\bullet $] For $d= 2$ and $x\neq y $:
\begin{eqnarray}
\frac{c^{-1}}{T^{\beta}}\bigg \{ \exp(-cT^{\beta}) \exp\left(-c\frac{|x-y|^2}{T^\beta} \right)\big(|\log(c^{1/2}\frac{|x-y|}{T^{\beta/2}})| 
\I_{|x-y|\le c^{-1/2}T^{\beta/2}}
+1\big)\nonumber
\\+\exp(-cT
)\exp\left(-c\Big\{\frac{|x-y|^2}{T^\beta}\Big\}^{\frac{1}{2-\beta}} \right)\bigg\}\nonumber\\
\le p_{X_{Z_T^\beta}}(x,y) \le \nonumber\\
\frac{c}{T^{\beta}}\bigg\{\exp(cT^{\frac \beta 2})\exp\left(-c^{-1}\frac{|x-y|^2}{T^\beta} \right)\big( |\log(c^{-1/2}\frac{|x-y|}{T^{\beta/2}})|
\I_{|x-y|\le c^{1/2}T^{\beta/2}}
+1\big) \nonumber\\
+\exp(cT^{\frac \beta{1+\beta})}) \exp\left(-c^{-1}\Big\{\frac{|x-y|^2}{T^\beta}\Big\}^{\frac{1}{2-\beta} } \right)\bigg \}
.\nonumber \\
\label{HK_DIFF_2}.
\end{eqnarray}
\item[$\bullet $] For $d\ge 3$ and $x\neq y $:
\begin{eqnarray}
c^{-1}\bigg \{\frac{ \exp(-cT^{\beta})}{T^\beta|x-y|^{d-2}} \exp\left(-c\frac{|x-y|^2}{T^\beta} \right)
+\frac{\exp(-cT
)
}{T^{\beta d/2}}\exp\left(-c\Big\{\frac{|x-y|^2}{T^\beta}\Big\}^{\frac{1}{2-\beta}} \right)\bigg\}\nonumber\\
\le p_{X_{Z_T^\beta}}(x,y) \le \nonumber\\
c\bigg\{\frac{\exp(cT^{\frac \beta 2})}{T^\beta |x-y|^{d-2}}\exp\left(-c^{-1}\frac{|x-y|^2}{T^\beta} \right) 
+\frac{\exp(cT^{\frac \beta{1+\beta})})}{T^{\beta d/2}} \exp\left(-c^{-1}\Big\{\frac{|x-y|^2}{T^\beta}\Big\}^{\frac{1}{2-\beta} } \right)\bigg \}
. 
\nonumber \\
 \label{HK_DIFF_GE3}
\end{eqnarray}
\item[-] \textbf{Strictly Stable Case:} Under \A{A${}_S $}, there exists $c:=c(\beta, $\A{A${}_S $})$\ge 1 $ s.t. 
for $d\ge 2$ and  $\alpha \in (0,2) $ or $d=1, \alpha\le 1$, a given $T>0$ and for all $(x,y)\in (\R^d)^2, x\neq y $, one has:
\begin{eqnarray}
c^{-1}\bigg\{\frac{\exp(-cT^{ \beta }) }{T^\beta |x-y|^{d-\alpha}}\I_{|x-y|\le T^{\beta/\alpha}}+ 
\frac{1}{T^{\beta d/\alpha}}\frac{\exp(-cT^\beta)}{\Big(1+\frac{|x-y|}{T^{\beta/\alpha}}\Big)^{d+\alpha}}\bigg \}\nonumber
\le p_{X_{Z_T^\beta}}(x,y) \le \nonumber\\
c\bigg\{\frac{\exp(cT^{ \beta \omega}) }{T^\beta |x-y|^{d-\alpha}}\I_{|x-y|\le T^{\beta/\alpha}}+ 
\frac{1}{T^{\beta d/\alpha}}\frac{\exp(cT^{\frac{\beta\omega}{1-\omega(1-\beta)}})}{\Big(1+\frac{|x-y|}{T^{\beta/\alpha}}\Big)^{d+\alpha}}\bigg \},\ \omega:=\frac 1\alpha \wedge 1
.\nonumber \\
\end{eqnarray}
If $d=1, \alpha>1  $ we get for all $(x,y)\in \R^2 $:
\begin{eqnarray}
c^{-1}\bigg\{
\frac{1}{T^{\beta d/\alpha}}\frac{\exp(-cT^\beta)}{\Big(1+\frac{|x-y|}{T^{\beta/\alpha}}\Big)^{d+\alpha}}\bigg \}\nonumber
\le p_{X_{Z_T^\beta}}(x,y) \le 
c\bigg\{
\frac{1}{T^{\beta d/\alpha}}\frac{\exp(cT^{\frac{\beta\omega}{1-\omega(1-\beta)}})}{\Big(1+\frac{|x-y|}{T^{\beta/\alpha}}\Big)^{d+\alpha}}\bigg \},\ \omega:=\frac 1\alpha
.\nonumber \\
\end{eqnarray}
\end{trivlist}

\end{trivlist}
\end{THM}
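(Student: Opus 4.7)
The upper bounds stated in every case are already a restatement of Corollary \ref{LEMME_QUI_FAIT_A_LA_KOCHUBEI}, itself a consequence of Lemmas \ref{HKUSUEL}, \ref{BOUNDS_P_Z_P_Z_H} and \ref{GROS_LEMME} applied with $\gamma=0$. The only task is therefore to produce matching lower bounds. I would again start from the representation \eqref{DENS_GLOB_MARKOV},
$$p_{X_{Z_T^\beta}}(x,y)=\int_0^{+\infty} p(u,x,y)\, p_{Z^\beta}(T,u)\, du,$$
and split at the characteristic scale $u\asymp T^\beta$ into a \emph{short-time} contribution (which will reproduce $\hat p_\beta$) and a \emph{long-time} contribution (which will reproduce $\tilde p_\beta$).

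\textbf{Pointwise lower bounds for the two factors.} Two matching lower estimates are needed. For the spatial kernel $p(u,x,y)$, classical Aronson-type bounds in the diffusive case and their strictly stable counterparts due to Kolokoltsov \cite{kolo:00} (each also obtainable from the parametrix series recalled in Appendix \ref{ASYMP_SMALL_TIMES}, using that the principal term dominates the remainder in short time) yield $p(u,x,y)\ge c^{-1}\exp(-cu^{1/2})g_c(u,x-y)$ in the diffusive case and $p(u,x,y)\ge c^{-1}\exp(-cu^{\omega})p_S(u,x-y)$ in the stable case. For the subordinator density, the asymptotics \eqref{ASYMP_ZOLO} combined with the identity \eqref{REL_DENS} from the proof of Lemma \ref{BOUNDS_P_Z_P_Z_H} are in fact two-sided and provide $p_{Z^\beta}(T,u)\ge c^{-1}T^{-\beta}$ for $u\le \kappa T^\beta$ and $p_{Z^\beta}(T,u)\ge c^{-1}T^{-\beta}\exp(-c(u/T^\beta)^{1/(1-\beta)})$ for $u\ge \kappa T^\beta$, for some $\kappa>0$; the intermediate regime is covered by the positivity of $p_{S^{\beta,+}}(1,\cdot)$ on compacts.

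\textbf{Short-time contribution.} Restricting the integral to $u\in[0,T^\beta]$ and using the above lower bound on $p_{Z^\beta}$ gives
$$p_{X_{Z_T^\beta}}(x,y)\ge c^{-1} T^{-\beta}e^{-cT^{\beta/2}}\int_0^{T^\beta}g_c(u,x-y)\,du,$$
which I would estimate dimension-by-dimension via the change of variables $v=|x-y|^2/u$, reducing the problem to $|x-y|^{2-d}\int_{|x-y|^2/T^\beta}^{+\infty}v^{(d-4)/2}e^{-v/c}\,dv$. For $|x-y|\le T^{\beta/2}$ this yields the stated diagonal singularity (a logarithm in $d=2$, a $|x-y|^{-(d-2)}$ divergence for $d\ge 3$, and no singularity for $d=1$), while for $|x-y|>T^{\beta/2}$ the integral is dominated by $u\sim T^\beta$ and reproduces the Gaussian factor $\exp(-c|x-y|^2/T^\beta)$. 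The strictly stable case is treated in the same manner with $v=|x-y|^\alpha/u$; this is exactly where the dichotomy for $(d,\alpha)=(1,\alpha>1)$ originates, since the integral $\int_0 u^{-1/\alpha}\,du$ is then convergent and the diagonal singularity disappears.

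\textbf{Long-time contribution and main obstacle.} For $u\ge T^\beta$ one has in the diffusive case the product lower bound
$$c T^{-\beta}u^{-d/2}\exp\!\Bigl(-|x-y|^2/(cu)-c(u/T^\beta)^{1/(1-\beta)}\Bigr),$$
and the difficulty, which is the main obstacle, is a Laplace-type analysis of the competition between the two exponentials: the joint exponent attains its minimum at $u^{\ast}\asymp T^\beta\bigl(|x-y|^2/T^\beta\bigr)^{(1-\beta)/(2-\beta)}$, where its value is $-c\{|x-y|^2/T^\beta\}^{1/(2-\beta)}$. A quadratic expansion around $u^{\ast}$ integrated over a window of size $\asymp u^{\ast}$ must be carried out so that the resulting polynomial prefactor exactly matches $T^{-\beta d/2}$, as in the upper estimate of Lemma \ref{GROS_LEMME}. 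In the strictly stable case no such Laplace analysis is needed: the polynomial tail of $p_S$ integrated against $p_{Z^\beta}$ supported around $u\asymp T^\beta$ directly yields the factor $(1+|x-y|/T^{\beta/\alpha})^{-(d+\alpha)}$, consistent with the absence of concentration loss mentioned in Remark \ref{RQ_DIAG_FRAC}.
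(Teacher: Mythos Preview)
Your proposal is correct and follows essentially the same route as the paper: upper bounds are quoted from Corollary~\ref{LEMME_QUI_FAIT_A_LA_KOCHUBEI}, and lower bounds are obtained by inserting two-sided estimates for $p(u,x,y)$ and $p_{Z^\beta}(T,u)$ into \eqref{DENS_GLOB_MARKOV}, splitting at $u=T^\beta$, and localizing the long-time integral near the critical scale $u^\ast\asymp T^\beta(|x-y|^2/T^\beta)^{(1-\beta)/(2-\beta)}$. The only deviation worth noting is that the paper uses the chaining lower bound $p(u,x,y)\ge c^{-1}\exp(-cu)\,g_c(u,x-y)$ (linear exponent in $u$, cf.\ \cite{bass:97}), not $\exp(-cu^{1/2})$ as you wrote; this is why the stated lower bounds carry a factor $\exp(-cT)$ rather than $\exp(-cT^{\beta/(1+\beta)})$, and for the $m_2$ contribution the paper simply restricts the integral to $[u^\ast,2u^\ast]$ rather than carrying out a full quadratic Laplace expansion.
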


\begin{proof}

We first mention that the arguments in the proof of Lemma \ref{BOUNDS_P_Z_P_Z_H} also give that, a lower bound homogeneous to \eqref{CTR_BOUNDS_P_Z_P_Z_H} holds for $p_{Z^\beta}(T,u) $.
Precisely:
\begin{equation}
\label{CTR_P_Z_LB}
\frac{c_\beta^{-1}}{T^\beta}\exp\left( -c_{\beta}\left\{\frac{u}{T^\beta} \right\}^{1/(1-\beta)}\right)\le p_{Z^\beta}(T,u).
\end{equation}

\begin{trivlist}
\item[-] \textbf{Diffusive case.}
Let us concentrate on the lower bounds, since the upper bounds are already derived in Corollary \ref{LEMME_QUI_FAIT_A_LA_KOCHUBEI}, and on the two-sided bounds for the two dimensional case. Note that we obtain in equations \eqref{HK_DIFF_2}, \eqref{HK_DIFF_GE3}, for dimensions $d\ge 2$, the same singularities that we observe for the Poisson kernel of the associated dimension.

We focus below on the lower bound for $d\ge 3$ and the two-sided bounds for $d= 2$. The other controls can be derived from arguments similar  to those developed below. One of the key points for the proof is the lower bound for the density of the spatial motion. Namely a lower bound homogeneous to the one in Lemma \ref{HKUSUEL} holds for the density of the spatial motion. It is classical, from chaining arguments, see e.g. Chapter 7 in Bass \cite{bass:97}, to derive from the parametrix expansions presented above that there exists $c:=c($\A{A${}_D $}) $\ge 1$ s.t. for all $(x,y)\in (\R^d)^2 $:
\begin{equation}
\label{LB_DIFF}
p(t,x,y)\ge c^{-1}\frac{\exp(-ct)}{t^{d/2}}\exp\left(- c\frac{|x-y|^2}{t}\right).
\end{equation}

\textit{Lower bound for $d\ge 3 $.}
We now start from \eqref{DENS_GLOB_MARKOV}, \eqref{CTR_P_Z_LB} and \eqref{LB_DIFF} to derive:
\begin{eqnarray}
\label{DECOUP_M1M2_DIFF}
p_{X_{Z_T^\beta}}(x,y)\ge \frac{c^{-1}}{T^\beta}
\left \{\exp(-cT^\beta)\int_{0}^{T^\beta}\frac{du}{u^{d/2}}\exp(-c\frac{|x-y|^2}{u})  \right. \nonumber\\
\left.+\int_{T^\beta}^{+\infty} \frac{du}{u^{d/2}}\exp(-c\frac{|x-y|^2}{u})\exp(-cu)\exp\left(-c\left[\frac{u}{T^\beta}\right]^{1/(1-\beta)}\right)\right\}\nonumber\\
:=(m_1+m_2)(T^\beta,x-y).
\end{eqnarray}
To control $m_1(T^\beta,x-y) $ we consider again the previous, diagonal/off-diagonal dichotomy:
\begin{trivlist} 
\item[-] For $ |x-y|/T^{\beta/2}\le 1$ (diagonal regime) write:
\begin{eqnarray*}
m_1(T^\beta,x-y)&\ge& \frac{c^{-1}\exp(-cT^\beta)}{T^{\beta}}\int_{|x-y|^2/2}^{|x-y|^2} \frac{du}{u^{d/2}}\\
&\ge &\frac{c^{-1}\exp(-cT^\beta)}{T^{\beta}}\int_{|x-y|^2/2}^{|x-y|^2} \frac{du}{|x-y|^d}\ge \frac{c^{-1}\exp(-cT^\beta)}{T^\beta|x-y|^{d-2}}.
\end{eqnarray*}
\item[-] For $ |x-y|/T^{\beta/2}>1$ (off-diagonal regime) write:
\begin{eqnarray*}
m_1(T^\beta,x-y)&\ge &\frac{c^{-1}\exp(-cT^\beta)}{T^\beta |x-y|^{d-2}}\int_{T^{\beta}/2}^{T^\beta} \frac{du}{u} \left(\frac{|x-y|}{u^{1/2}}\right)^{d-2}\exp\left(-c\frac{|x-y|^2}{u} \right)\\
&\ge & \frac{c^{-1}\exp(-cT^\beta)}{T^\beta |x-y|^{d-2}}\exp\left (-c\frac{|x-y|^2}{T^\beta} \right) \int_{T^\beta/2}^{T^\beta} \frac{du}{u}\\
&\ge & \frac{c^{-1}\exp(-cT^\beta)}{T^\beta |x-y|^{d-2}}\exp\left (-c\frac{|x-y|^2}{T^\beta} \right).
\end{eqnarray*}
\end{trivlist}
We have thus established that:
\begin{equation}
\label{CTR_M1} 
m_1(T^\beta,x-y)\ge \frac{c^{-1}\exp(-cT^\beta)}{T^\beta |x-y|^{d-2}}\exp\left (-c\frac{|x-y|^2}{T^\beta} \right).
\end{equation}

\end{trivlist}
On the other hand for the contribution $m_2(T^\beta,x-y) $ in \eqref{DECOUP_M1M2_DIFF} we get:
\begin{trivlist}
\item[-] For $ |x-y|/T^{\beta/2}\le 1$ (diagonal regime) write:
\begin{eqnarray*}
m_2(T^\beta,x-y)\ge \frac{c^{-1}\exp(-cT^\beta)}{T^{\beta(1+d/2)}}\int_{T^\beta}^{2T^{\beta}} du \exp\left(-c\left[\frac{u}{T^\beta} \right]^{1/(1-\beta)}\right) \ge \frac{c^{-1}\exp(-cT^\beta)}{T^{\beta d/2}}.
\end{eqnarray*}
\item[-] For $ |x-y|/T^{\beta/2}> 1$ (off-diagonal regime), write first from Young's inequality,
$$u\le c_\beta \big [(\frac{u}{T^\beta})^{1/(1-\beta)}+(T^{\beta})^{1/\beta}  \big].$$
Computations similar to those in Lemma \ref{GROS_LEMME} then yield:
\begin{eqnarray*}
m_2(T^\beta,x-y)\ge 
\frac{c^{-1}\exp(-cT
)}{T^{\beta}}\\
\times \int_{ [|x-y|^2T^{\frac\beta{1-\beta}}]^{\frac{1-\beta}{2-\beta}}}^{ 2[|x-y|^2T^{\frac{\beta}{1-\beta}}]^{\frac{1-\beta}{2-\beta}}} \frac{du}{u^{d/2}}\exp\left(-c\frac{|x-y|^2}{u} \right) \exp\left(-c\left[\frac{u}{T^\beta} \right]^{1/(1-\beta)}\right) \\
\ge \frac{c^{-1}\exp(-cT
)}{T^{\beta}} \exp\left( -c\left[\frac{|x-y|^2}{T^\beta} \right]^{\frac{1}{2-\beta}}\right)\frac{1}{[|x-y|^2T^{\frac{\beta}{1-\beta}}]^{\frac{1-\beta}{2-\beta}(d/2-1)}}\\
\ge \frac{c^{-1}\exp(-cT
)}{T^{\beta}} \exp\left( -c\left[\frac{|x-y|^2}{T^\beta} \right]^{\frac{1}{2-\beta}}\right)\frac{1}{[T^{\beta (1+\frac{1}{1-\beta})}]^{\frac{1-\beta}{2-\beta}(d/2-1)}}\\
\ge\frac{c^{-1}\exp(-cT
)}{T^{\beta d/2}} \exp\left( -c\left[\frac{|x-y|^2}{T^\beta} \right]^{\frac{1}{2-\beta}}\right),
\end{eqnarray*}
recalling that $\forall z\ge 1, \ \exp(-z^{1/(2-\beta)})z^{-\frac{1-\beta}{2-\beta}(d/2-1)} \ge c^{-1}\exp(-cz^{1/(2-\beta)}) $ for the last but one inequality.
We have thus proved in all cases:
\begin{equation*}
m_2(T^\beta,x-y)\ge \frac{c^{-1}\exp(-cT
)}{T^{\beta d/2}} \exp\left( -c\left[\frac{|x-y|^2}{T^\beta} \right]^{\frac{1}{2-\beta}}\right),
\end{equation*}
which together with \eqref{CTR_M1} and \eqref{DECOUP_M1M2_DIFF} completes the proof of the lower bound for $d\ge 3$.

\textit{Bounds for $d=2$.} The lower bound \eqref{DECOUP_M1M2_DIFF} and an homogeneous upper bound still hold, with obvious modifications of the constants. We focus on the contribution $m_1(T^\beta,x-y) $ yielding the additional spatial diagonal singularity. The term $m_2(T^\beta,x-y) $ can be analyzed as above. Let us write:
\begin{eqnarray*}
m_1(T^\beta,x-y)\ge \frac{c^{-1}\exp(-cT^\beta)}{T^\beta}\int_0^{T^\beta} \frac{du}{u}\exp\left(-c\frac{|x-y|^2}{u}\right).
\end{eqnarray*}
Assume first that $c^{1/2}|x-y|/T^{\beta/2}\le 1 $. Setting $v:=\exp\left(-c\frac{|x-y|^2}{u}\right) $ in the above integral we derive:
\begin{eqnarray*}
m_1(T^\beta,x-y)\ge \frac{c^{-1}\exp(-cT^\beta)}{T^\beta}\int_0^{\exp(-c\frac{|x-y|^2}{T^\beta})} \frac{dv}{-\log(v)}\\
\ge \frac{c^{-1}\exp(-cT^\beta)}{T^\beta}\left\{ -v\log(-\log(v))|_{0}^{\exp(-c\frac{|x-y|^2}{T^\beta})}+\int_0^{\exp(-c\frac{|x-y|^2}{T^\beta})} \log(-\log(v))dv\right\}\\
\ge \frac{c^{-1}\exp(-cT^\beta)}{T^\beta} \Big\{\exp(-c\frac{|x-y|^2}{T^\beta}) \big(-\log(c\frac{|x-y|^2}{T^\beta}) \big)\\
+\int_{e^{-1}}^{\exp(-c\frac{|x-y|^2}{T^\beta})}\log(-\log(v)) dv\Big\}\\
\ge \frac{c^{-1}\exp(-cT^\beta)}{T^\beta} \Big\{\exp(-c\frac{|x-y|^2}{T^\beta}) \big(-\log(c\frac{|x-y|^2}{T^\beta}) \big)\\
+\log(c\frac{|x-y|^2}{T^\beta}) (\exp(-c\frac{|x-y|^2}{T^\beta})-e^{-1})\Big\}.
\end{eqnarray*}
Hence:
\begin{eqnarray*}
m_1(T^\beta,x-y)&\ge& \frac{c^{-1}\exp(-cT^\beta)e^{-1}}{T^\beta}\big(-\log(c\frac{|x-y|^2}{T^\beta}) \big)\\
&\ge &\frac{c^{-1}e^{-1}\exp(-cT^\beta)\exp\left(-c\frac{|x-y|^2}{T^\beta} \right)}{T^\beta}|\log(c\frac{|x-y|^2}{T^\beta})| .
\end{eqnarray*}
The upper-bound could be derived similarly.

If now $c^{1/2}|x-y|/T^{\beta/2}> 1  $, we write:
\begin{eqnarray*}
m_1(T^\beta,x-y)&\ge &\frac{c^{-1}\exp(-cT^\beta)\exp(-c\frac{|x-y|^2}{T^\beta})}{T^\beta}\int_{T^\beta/2}^{T^\beta} \frac{du}{u}\\
&\ge&  \frac{c^{-1}\exp(-cT^\beta)\exp(-c\frac{|x-y|^2}{T^\beta})}{T^\beta}.
\end{eqnarray*}
The previous bounds give the result.
\item[-]{\textbf{Strictly Stable Case.}} We focus here on the lower bound for $d\ge 2$. The other cases can be handled similarly.
From the lower bound in Kolkoltsov \cite{kolo:00} for the density and a chaining argument one derives that there exists $c:=c($\A{A${}_S $})$\ge 1 $ s.t. for all $t>0,\ (x,y)\in (\R^d)^2 $:
\begin{equation}
\label{LB_STAB}
p(t,x,y)\ge \frac{c^{-1}\exp(-ct)}{t^{d/\alpha}}\frac{1}{\left(1+\frac{|x-y|}{t^{1/\alpha}} \right)^{d+\alpha}}.
\end{equation}
From \eqref{DENS_GLOB_MARKOV}, \eqref{CTR_P_Z_LB} and \eqref{LB_STAB} we now get:
\begin{eqnarray}
\label{DECOUP_M1M2_DIFF_S}
p_{X_{Z_T^\beta}}(x,y)\ge \frac{c^{-1}}{T^\beta}
\left \{\exp(-cT^\beta)\int_{0}^{T^\beta}\frac{du}{u^{d/\alpha}} \frac{1}{\left(1+\frac{|x-y|}{u^{1/\alpha}} \right)^{d+\alpha}}  \right. \nonumber\\
\left.+\int_{T^\beta}^{+\infty} \frac{du}{u^{d/\alpha}}\frac{1}{\left(1+\frac{|x-y|}{u^{1/\alpha}} \right)^{d+\alpha}}\exp(-cu)\exp\left(-c\left[\frac{u}{T^\beta}\right]^{1/(1-\beta)}\right)\right\}\nonumber\\
:=(m_1+m_2)(T^\beta,x-y).
\end{eqnarray}
Let us first control $m_1(T^\beta,x-y) $ exploiting again the diagonal/off-diagonal dichotomy.
\begin{trivlist}
\item[-] If $|x-y|\le T^{\beta/\alpha} $ then
\begin{eqnarray*}
m_1(T^\beta,x-y)\ge \frac{c^{-1}\exp(-cT^\beta)}{T^\beta}\int_{|x-y|^{\alpha}/2}^{|x-y|^\alpha} \frac{du}{u^{d/\alpha}}
\ge \frac{c^{-1}\exp(-cT^\beta)}{T^\beta|x-y|^{d-\alpha}}.
\end{eqnarray*}
\item[-] If $|x-y|> T^{\beta/\alpha} $ we get:
\begin{eqnarray*}
m_1(T^\beta,x-y)\ge \frac{c^{-1}}{T^\beta}
\exp(-cT^\beta)\int_{T^{\beta}/2}^{T^\beta} du  \frac{u}{|x-y|^{d+\alpha}}\ge  \frac{c^{-1}T^\beta}{|x-y|^{d+\alpha}}
\exp(-cT^\beta).
\end{eqnarray*}
\end{trivlist}
We have thus established 
\begin{equation}
\label{CTR_M1_S}
m_1(T^\beta,x-y)\ge c^{-1}\exp(-cT^\beta)\Big\{ \frac{1}{T^\beta |x-y|^{d-\alpha}}\I_{|x-y|\le T^{\beta/\alpha}}+\frac{T^\beta}{|x-y|^{d+\alpha}}\I_{|x-y|>T^{\beta/\alpha}}\Big\}.
\end{equation}
Let us now turn to $m_2(T^\beta,x-y) $. We get:
\begin{trivlist}
\item[-] If  $|x-y|\le T^{\beta/\alpha} $,
\begin{eqnarray*}
m_2(T^\beta,x-y)\ge  \frac{c^{-1}}{T^\beta}
\exp(-cT^\beta)\int_{T^{\beta}}^{2T^\beta} \frac{du}{u^{d/\alpha}} \ge \frac{c^{-1}}{T^{d\beta/\alpha}}
\exp(-cT^\beta).
\end{eqnarray*}
\item[-] If  $|x-y|>T^{\beta/\alpha} $,
\begin{eqnarray*}
m_2(T^\beta,x-y)\ge  \frac{c^{-1}}{T^\beta|x-y|^{d+\alpha}}
\exp(-cT^\beta)\int_{T^{\beta}}^{2 T^\beta} u du \ge
\frac{c^{-1}T^\beta}{|x-y|^{d+\alpha}}
\exp(-cT^\beta).
\end{eqnarray*}
Plugging the above estimates and \eqref{CTR_M1_S} into \eqref{DECOUP_M1M2_DIFF_S} gives the result. 
\end{trivlist}
\end{trivlist}
\end{proof}

\begin{REM}
Let us emphasize  that the bounds of Theorem \ref{HK_BOUNDS_FRAC} would hold under the weaker assumptions that the coefficients $b,\sigma$ are measurable and s.t. $\sigma\sigma^* $ is H\"older continuous and $b$ is bounded. Indeed, the two-sided heat kernel bounds for the spatial motion hold in that case. We can for instance refer to Sheu \cite{sheu:91} in the diffusive case or to Huang \cite{huan:15} in the strictly stable case.

Let us point out as well that the two sided bounds hold in the diffusive case under those assumptions for the density of $X_{Z_T^{\beta,h}}^{h,{\rm Eul}} $ associated with the Euler scheme approximation  for the spatial motion. Again, the key estimate is a two-sided bound for the Euler scheme which can be found in Lemaire and Menozzi \cite{lema:meno:10}. In the strictly stable case, the upper bound holds under the indicated assumptions. This is a consequence of the parametrix expansion for the density of the scheme, see \cite{kona:meno:10}. The lower bound is more delicate to obtain since even in the diffusive case, the localization arguments which are standard for the SDE need to be carefully adapted for the scheme. 
\end{REM}

\begin{REM}
We conclude saying that the results in Theorem \ref{THM_DIFF} could be slightly improved in light of the sharp estimates of Theorem \ref{HK_BOUNDS_FRAC} for $d=2$. We did not exploit those controls in the presentation of the main results mainly for notational coherence and simplicity.
\end{REM}

\section*{Acknowledgments}
The article was prepared within the framework of a subsidy granted to the HSE by the Government of the Russian Federation for the implementation of the Global Competitiveness Program.

\bibliographystyle{alpha}
\bibliography{bibli}

\begin{thebibliography}{KMM10}

\bibitem[Bas97]{bass:97}
R.~F. Bass.
\newblock {\em Diffusions and {E}lliptic {O}perators}.
\newblock Springer, 1997.

\bibitem[BO09]{begh:orsi:09}
L.~Beghin and E.~Orsingher.
\newblock Fractional diffusion equations and processes with randomly varying
  time.
\newblock {\em Ann. Prob.}, 37-1:206--249, 2009.

\bibitem[BT96a]{ball:tala:96:1}
V.~Bally and D.~Talay.
\newblock The law of the {Euler} scheme for stochastic differential equations:
  I. {C}onvergence rate of the distribution function.
\newblock {\em Prob. Th. Rel. Fields}, 104-1:43--60, 1996.

\bibitem[BT96b]{ball:tala:96:2}
V.~Bally and D.~Talay.
\newblock The law of the {E}uler scheme for stochastic differential equations,
  {II.} {C}onvergence rate of the density.
\newblock {\em Monte-Carlo methods and Appl.}, 2:93--128, 1996.

\bibitem[CW92]{chri:wolf:92}
G.~Christoph and W.~Wolf.
\newblock {\em Convergence theorems with a stable limit law}, volume~70 of {\em
  Mathematical Research}.
\newblock Akademie-Verlag, Berlin, 1992.

\bibitem[EK04]{koch:eide:04}
S.~D. {E}{\u\i}delman and A.~N. Kochube{\u\i}.
\newblock {C}auchy problem for fractional diffusion equations.
\newblock {\em Journal of Differential Equations}, 199(2):211--255, 2004.

\bibitem[Fri64]{frie:64}
A.~Friedman.
\newblock {\em Partial {D}ifferential {E}quations of {P}arabolic {T}ype}.
\newblock Prentice-Hall, 1964.

\bibitem[GL08]{gobe:laba:08}
E.~Gobet and C.~Labart.
\newblock Short time asymptotics of the density of the {E}uler scheme.
\newblock {\em Electronic {C}ommunications in {P}robability}, 23, 2008.

\bibitem[HKU11]{hahn:koba:umar:11}
M.~Hahn, K.~Kobayashi, and S.~Umarov.
\newblock {F}okker-{P}lanck-{K}olmogorov equations associated with time-changed
  fractional {B}rownian motion.
\newblock {\em Proc. Amer. Math. Soc.}, 139:691--705, 2011.

\bibitem[Hua15]{huan:15}
L.~Huang.
\newblock Two-sided bounds for {SDE}s driven by tempered stable processes.
\newblock {\em ar{X}iv:1504.04183}, 2015.

\bibitem[KM00]{kona:mamm:00}
V.~Konakov and E.~Mammen.
\newblock Local limit theorems for transition densities of {M}arkov chains
  converging to diffusions.
\newblock {\em Prob. Th. Rel. Fields}, 117:551--587, 2000.

\bibitem[KM02]{kona:mamm:02}
V.~Konakov and E.~Mammen.
\newblock Edgeworth type expansions for {E}uler schemes for stochastic
  differential equations.
\newblock {\em Monte Carlo Methods Appl.}, 8--3:271--285, 2002.

\bibitem[KM09]{kona:mamm:09}
V.~Konakov and E.~Mammen.
\newblock Small time {E}dgeworth-type expansions for weakly convergent
  nonhomogeneous {M}arkov chains.
\newblock {\em Prob. Th. Rel. Fields}, 143, 1-2:137--176, 2009.

\bibitem[KM10]{kona:meno:10}
V.~Konakov and S.~Menozzi.
\newblock Weak error for stable driven stochastic differential equations:
  Expansion of the densities.
\newblock {\em Journal of Theoretical Probability}, 24-2:554--578, 2010.

\bibitem[KMM10]{kona:meno:molc:10}
V.~Konakov, S.~Menozzi, and S.~Molchanov.
\newblock Explicit parametrix and local limit theorems for some degenerate
  diffusion processes.
\newblock {\em Annales de l'Institut Henri Poincar\'e, S\'erie B},
  46--4:908--923, 2010.

\bibitem[Koc14]{koch:14}
A.~N. Kochubei.
\newblock Cauchy problem for fractional diffusion-wave equations with variable
  coefficients.
\newblock {\em Appl. Anal.}, 93(10):2211--2242, 2014.

\bibitem[Kol00]{kolo:00}
V.~Kolokoltsov.
\newblock Symmetric stable laws and stable-like jump diffusions.
\newblock {\em Proc. {L}ondon {M}ath. {S}oc.}, 80:725--768, 2000.

\bibitem[Kol09]{kolo:09}
V.~Kolokoltsov.
\newblock Generalized {C}ontinuous-{T}ime {R}andom {W}alks ({CTRW}).
\newblock {\em Th. Prob. Appl.}, 53:4:594--609, 2009.

\bibitem[LM10]{lema:meno:10}
V.~Lemaire and S.~Menozzi.
\newblock On some non asymptotic bounds for the {E}uler scheme.
\newblock {\em Electronic Journal of Probability}, 15:1645--1681, 2010.

\bibitem[MPS07]{molc:petr:squa:07}
S.~A. Molchanov, V.~V. Petrov, and N.~Squartini.
\newblock Quasicumulants and limit theorems in case of the {C}auchy limiting
  law.
\newblock {\em Markov Process. Related Fields}, 13(3):597--624, 2007.

\bibitem[MS67]{mcke:sing:67}
H.~P. McKean and I.~M. Singer.
\newblock Curvature and the eigenvalues of the {L}aplacian.
\newblock {\em J. Differential Geometry}, 1:43--69, 1967.

\bibitem[MS76]{mita:stat:76}
A.~Mitalauskas and V.~Statuljavi{\v{c}}jus.
\newblock An asymptotic expansion in the case of a stable approximating law.
\newblock {\em Litovsk. Mat. Sb.}, 16(4):149--166, 255, 1976.

\bibitem[MS12]{meer:siko:12}
M.~M. Meerschaert and A.~Sikorskii.
\newblock {\em Stochastic {M}odels for {F}ractional {C}alculus}, volume~43 of
  {\em de Gruyter Studies in Mathematics}.
\newblock Walter de Gruyter \& Co., Berlin, 2012.

\bibitem[MS13]{meer:stra:13}
M.~M. Meerschaert and P.~Straka.
\newblock Inverse {S}table {S}ubordinators.
\newblock {\em Math. Model. Nat. Phenom.}, 8-2:1--16, 2013.

\bibitem[Pet05]{petr:05}
V~Petrov.
\newblock {\em Sum of {I}ndependent {R}andom {V}ariables}.
\newblock Cambridge University Press, 2005.

\bibitem[Pod99]{podl:99}
I.~Podlubny.
\newblock {\em Fractional {D}ifferential {E}quations}, volume 198 of {\em
  Mathematics in Science and Engineering}.
\newblock Academic Press, Inc., San Diego, CA, 1999.

\bibitem[RY99]{revu:yor:99}
D.~Revuz and M.~Yor.
\newblock {\em Continuous {M}artingales and Brownian {M}otion. 3rd ed.}
\newblock Grundlehren der Mathematischen Wissenschaften. 293. Berlin: Springer,
  1999.

\bibitem[Sat05]{sato:05}
K.~Sato.
\newblock {\em L\'evy processes and Infinitely divisible Distributions}.
\newblock Cambridge University Press, 2005.

\bibitem[She91]{sheu:91}
S.~J. Sheu.
\newblock Some estimates of the transition density of a nondegenerate diffusion
  {M}arkov process.
\newblock {\em Ann. Probab.}, 19--2:538--561, 1991.

\bibitem[Squ07]{squa:PhD:07}
N.~Squartini.
\newblock {\em Global limit theorems for sums of independent identically
  distributed random variables using quasicumulants}.
\newblock ProQuest LLC, Ann Arbor, MI, 2007.
\newblock Thesis (Ph.D.)--The University of North Carolina at Charlotte.

\bibitem[WR95]{wero:wero:95}
A.~Weron and R.Weron.
\newblock Computer simulation of {L}\'evy $\alpha-$stable variables and
  processes.
\newblock Lecture Notes in Physics 457. Springer-Verlag:379--392, 1995.

\bibitem[Zol92]{zolo:92}
V.~M. Zolotarev.
\newblock {\em One dimensional {S}table processes}.
\newblock Transl. A.M.S., 1992.

\end{thebibliography}

\end{document}